\newcommand{\RV}{\mathcal{RV}}
\newcommand{\dd}{\mathrm{d}}
\newcommand{\R}{\mathbb{R}}
\newcommand{\eps}{\varepsilon}
\newcommand{\argmax}{\operatornamewithlimits{\arg\max}}
\newcommand{\E}{\op{\mathbb{E}}}
\newcommand{\p}{\mathbb{P}}
\def\z@first#1#2{#1}
\def\z@second#1#2{#2}
\def\z@zp@selectchar#1#2{
	\IfStrEqCase{#2}{%
		{p}{#1{(}{)}}%
		{P}{#1{)}{(}}%
		{c}{#1{[}{]}}%
		{C}{#1{]}{[}}%
		{a}{#1{\{}{\}}}%
		{A}{#1{\}}{\{}}%
		{i}{#1{[}{]}\!#1{[}{]}}%
		{I}{#1{]}{[}\!#1{]}{[}}%
		{t}{#1{<}{>}}%
		{T}{#1{>}{<}}%
		{b}{#1{|}{|}}%
		{n}{#1{\|}{\|}}%
		{v}{#1{.}{.}}%
	}[#1{(}{)}]%
}
\def\z@zp#1#2\fin#3{
	\z@zp@selectchar{\left\z@first}{#1}#3
	\zifempty{#2}%
	{\z@zp@selectchar{\right\z@second}{#1}}%
	{\z@zp@selectchar{\right\z@second}{#2}}%
}
\newcommand{\zp}[2][p]{\zifempty{#1}{\left(#2\right)}{\z@zp#1\fin{#2}}}
\def\zifempty#1#2#3{\def\foo{#1}\ifx\foo\empty\relax#2\else#3\fi}
\newcommand{\lam}{\lambda}
\newcommand{\vfi}{\varphi}
\newcommand{\al}{\alpha}
\newcommand{\bgt}{\begin{itemize}}
	\newcommand{\ent}{\end{itemize}}
\newcommand{\op}{\operatorname}
\newcommand{\Cov}{\operatorname{Cov}}
\newcommand{\f}{\frac}
\newcommand{\ff}{\frac{1}}
\newcommand{\bbm}{\begin{bmatrix}}
	\newcommand{\ebm}{\end{bmatrix}}
\newcommand{\bes}{\begin{equation*}}
	\newcommand{\ees}{\end{equation*}}
\newcommand{\be}{\begin{equation}}
	\newcommand{\ee}{\end{equation}}
\newcommand{\beqy}{\begin{eqnarray}}
	\newcommand{\eeqy}{\end{eqnarray}}
\newcommand{\beq}{\begin{eqnarray*}}
	\newcommand{\eeq}{\end{eqnarray*}}
\newcommand{\bpm}{\begin{pmatrix}}
	\newcommand{\epm}{\end{pmatrix}}
\long\def\symbolfootnote[#1]#2{\begingroup
	\def\thefootnote{\fnsymbol{footnote}}\footnote[#1]{#2}\endgroup}
\def\@addpunct#1{%
	\relax\ifhmode
	\ifnum\spacefactor>\@m \else#1\fi
	\fi}
\newtheorem{theorem}{Theorem}      
\newtheorem{Prop}{Proposition}     
\newtheorem{lem}{Lemma}
\theoremstyle{definition}
\newtheorem{defn}{Definition}
\newtheorem{Example}{Example}
\theoremstyle{remark}
\newtheorem{Remark}{Remark}
\begin{document}
	
\title[Functional Extreme-PLS]{Functional Extreme-PLS}
\author{Stéphane Girard\textsuperscript{1} and Cambyse Pakzad\textsuperscript{2}}
\address{
  \textsuperscript{1}Univ. Grenoble Alpes, Inria, CNRS, Grenoble INP, LJK, 38000 Grenoble, France \\
  \textsuperscript{2}Modal’X, Université Paris Nanterre, 200 Avenue de la République, 92001 Nanterre, France
}
\email{\textsuperscript{1}stephane.girard@inria.fr, \textsuperscript{2}cambyse.pakzad@parisnanterre.fr}

\subjclass[2020]{60G70, 62G08, 62G32, 62R10}

\keywords{Extreme value theory; extremal stochastic processes; nonparametric regression and quantile regression; statistics of extreme values and tail-inference; functional data analysis}

\begin{abstract}
We propose an extreme dimension reduction method extending the Extreme-PLS approach to the discretized functional framework, where the covariate lies in the infinite-dimensional Hilbert space $L^2([0,1])$ but is partially observed on a dense time grid. The ideas are partly borrowed from both Partial Least-Squares (PLS) and Sliced Inverse Regression (SIR) techniques. Accordingly, the method relies on the projection of the covariate onto a subspace and maximizes the covariance between its projection and the response conditionally on an extreme event capturing the tail-information. The covariate and the heavy-tailed response are supposed to be linked through a non-linear inverse single-index model and our goal is to infer the index in this regression framework. We propose a new family of estimators and show its asymptotic consistency with convergence rates under the model. Assuming mild conditions on the noise, most of the assumptions are stated in terms of regular variation unlike the standard literature on SIR and single-index regression. In addition, we expand the theoretical analysis with a model-free almost sure consistency result for the empirical tail-moments in a general separable Hilbert space. Finally, our results are illustrated on a finite-sample study with synthetic functional data as well as high-frequency financial data, highlighting the effectiveness of the dimension reduction for capturing tail dependence and for extreme risk management.
\end{abstract}

\maketitle

		\section{Introduction}
		
		Among the popular methods for dimension reduction, the Partial Least Squares (PLS) method, initiated by \citet{Wold1975} and adapted to $L^2([0,1])$ by \citet{Preda2005}, has proven to be successful in both theoretical and practical sides; especially in chemometrics \citep{martens1992multivariate} where it takes root. It combines characteristics of Principal Component Analysis (PCA) and regression. Its core idea is to find a projection of the covariate/predictor $X$ having high variance and correlation with the response $Y$. Another standard line of research is Sliced Inverse Regression (SIR), originally introduced by \citet{Li1991} and then by \citet{DauxoisFerreYao2001} in the functional framework. It hinges on the inverse regression `$X$ against $Y$' that happens to be simpler than the forward regression, usually at the price of a certain condition of linearity and constant variance. This somewhat departs from the Fisherian viewpoint, in which inference is carried out conditionally on the covariate $X$; see \citet{Cook2007} for a discussion advocating inverse regression.  
		
		In any case, the aforementioned techniques require the full sample and are not tailored for extreme-value analysis. Hence, one may wonder what happens 
		in the extreme regime, \emph{i.e.,} when the focus is on the conditional tail of $Y$
		given a high-dimensional covariate $X$.
		This question belongs to an active line of research combining dimension reduction and extreme-value statistics. Here, a major challenge is a twofold scarcity, in the sense that both the curse of dimensionality and the rarity of tail observations operate. Such hindrance may apply in the estimation of certain statistics or risk measures in the functional extreme regime (see \emph{e.g.}, \citet{GG2012,GSU2022}). To circumvent this issue, one would build up the inference on the existence of a projection that captures all the information on the extreme values of $Y$. Next, instead of using standard estimation methods on $X$, one would replace $X$ by its projection as the covariate, opting for a lower-dimensional method expected to yield comparable or even better performances. The pioneering work may be identified as \citet{Gardes2018}, later followed up by \citet{Gardes2020}, which proposes a new notion of tail conditional independence. Applying this to the extremal quantile regression, the author then studies the inference under a single and multi-index model when the dimension is large but finite and when the dimension reduction subspace is known. Still, some theoretical guarantees remain open and the computational cost is substantial. In this direction, \citet{wang2020extreme} develop a three-step estimation procedure for such large conditional quantiles and show $\sqrt{n}$-consistency under a new tail single-index model with unknown indices, at a reduced computational cost. Alternatively, focusing on the prediction of tail events, \citet{Aghbalou2024} propose a new framework which introduces a revised definition of tail conditional independence and provides theoretical guarantees for estimating a low-dimensional projection of covariates that is sufficient for predicting extremes, using inverse regression under standard SIR assumptions.

		Recently, a novel method based on the PLS in the extreme regime was introduced by \citet{Bousebata2023}. Therein, the authors introduce an inverse single-index model. This means that  the covariate is written in terms of the response and some unknown link function (instead of the converse) which results in a reduction of dimension. In particular, under some conditions and within the regular variation framework, this inverse point of view may also be interpreted as a forward single-index model. Next, they propose an estimator of the index which is shown to be consistent with explicit rate and under a deterministic threshold level. The nature of their assumptions is quite different from the standard literature of dimension reduction as neither conditions on the inverse conditional mean $\E(X|Y)$, such as linearity, nor independence or centring of the noise are required. We also refer to \citet{STCO7} for an adaptation to the Bayesian framework allowing the introduction of prior information on the index.
		
		In this work, we take up the methodology introduced in \citet{Bousebata2023} for the supervised branch of extreme dimension reduction and convert it to the functional covariate setting. Specifically, we project the predictor/covariate onto a one-dimensional subspace while capturing most of the information explaining the extreme values of the response variable. To this end, we consider covariates that lie in the infinite-dimensional Hilbert space $H=L^2([0,1])$ and the projection is expressed in terms of the associated inner product. In addition, we assume the more realistic case where the functional covariate is discretized as in \citet{Cardot2003,Crambes2009}, \emph{i.e.}, partially observed on a deterministic time grid. Our methodology would equally apply when the functional covariate is ideally/fully observed (in this case, $H$ could be any separable Hilbert space) and also when $H=\R^d$ with $d$ large but finite, thereby encompassing \citet{Bousebata2023} as a particular case. Following the philosophy of PLS, we maximize the covariance between the projected predictor/covariate and the response conditionally on the response being larger than an increasing threshold. This corresponds to the extreme framework where the response is assumed to be heavy-tailed. As for the covariate, we echo SIR by modelling its relationship to the response through a non-linear inverse regression model which relies on a single index, \emph{i.e.,} a deterministic vector in $H$. One may again recover a forward single-index model interpretation. Thereby, the method at hand induces an effective reduction of dimension as we provide an estimation of the aforementioned index based on the observations. Using a simple geometrical interpretation, we in fact propose a new family of estimators that generalizes the one in \citet{Bousebata2023} and hence gives the statistician more freedom in choosing which part of the extreme regime to take into account. The consistency (with rate of convergence) of these estimators 
		is established under similar conditions as in \citet{STCO7,Bousebata2023} while extending these two works to the more realistic case of a random threshold given by an extreme order statistic. The assumptions are expressed in the language of regular variation and require minimal conditions about the noise, unlike the standard literature on SIR and forward single-index model. In practice, the finite-sample properties of the method show good results on numerical simulations with synthetic functional data and on high-frequency financial data. 
		
		The remainder of the paper is organized as follows. First, we introduce in Section~\ref{sec-theo} the mathematical framework for the Functional Extreme Partial Least-Squares (FEPLS for short) and, more specifically, the inverse single-index model. The estimators are presented in Section~\ref{sec-inf} with the associated consistency results within and outside the inverse regression model. Next, the FEPLS finite-sample properties are illustrated in Section~\ref{sec-sim} on simulated data. Beyond simulations, we illustrate in Section~\ref{sec-real} the method’s practical value in a financial risk assessment context, using high-frequency asset data to estimate conditional extreme quantiles and compare tail dependence capture against standard dimension-reduction techniques. The corresponding \texttt{Python} code is publicly available on the website of the authors. 
		The mathematical details needed for the derivation of our main result are gathered in \ref{sec-proofs}.
		
		\section{Theoretical framework}
		\label{sec-theo}
		Let $H=L^2([0,1],\R)$ endowed with its usual scalar product $\langle \cdot,\cdot \rangle$ and Borel $\sigma$-field $\mathcal{B}(H)$, and let $(\Omega,\mathcal{A},\p)$ be a complete probability space. Consider a random pair $(Y,X)$, where $Y:\Omega\to \R$ and $X:\Omega\to H$ are random variables, \emph{i.e.}, $Y$ is Borel measurable and $X$ is strongly $\p$-measurable in the sense of Bochner, see \citet[Definition 1.1.3]{bochner}.
		
		Let us first focus on the direction defined as
		\begin{align}
			\label{eq:fepls}
			w(y):=\argmax_{\|  w\|=1} \Cov\big( \langle  w,   X \rangle,Y \mid Y\ge y\big),\quad y\in \R, 
		\end{align} 
		where $\|\cdot \|$ is the norm induced by the inner product $\langle \cdot,\cdot\rangle$ and $y$ is a threshold, ultimately taken large. The expectation underlying the covariance~in \eqref{eq:fepls} is understood in the sense of Lebesgue. 
		The interpretation of~\eqref{eq:fepls} follows the core idea of PLS but in the extreme regime. Using an optimization framework, we seek the element in $H$ which maximizes the tail-information, by considering the covariance 
		conditionally on the tail event of $Y$, shared between $Y$ and the projection of $X$ on a single direction.

		In the sequel, the cumulative distribution function (cdf) of $Y$ is denoted by $F$ and the survival function by $\bar{F}=1-F$. In addition, let us define the tail-moment of a random object~$W$ by $m_W(y) := \E(W 1_{\{Y\ge y\}})$ whenever it exists. In the case where $W\in H$, we interpret the expectation at hand as the Bochner expectation denoted by $\E_{\mathcal{B}}$. Note that, since we consider a separable space, we could equivalently use the Pettis integral instead. We refer to \citet{bochner} for more details about the notion. 
		
		A key fact is that the solution of the optimization problem~\eqref{eq:fepls} is explicit:
		\begin{Prop}\label{prop:solution}
			Suppose that $\E\big((1+\|X\|)(1+|Y|)\big)<+\infty$. Then, the unique solution of the optimization problem \eqref{eq:fepls} is given by, for any $y\in \R$ such that $\bar{F}(y)>0$ and $v(y)\neq 0$,
			\begin{align}
				\label{eq:solution}
				w(y) &= v(y)/{\|v(y)\|} \mbox{ where } v(y) = \bar{F}(y) m_{XY}(y)- m_X(y) m_Y(y).
			\end{align} 
		\end{Prop}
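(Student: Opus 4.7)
The plan is to rewrite the conditional covariance as a linear functional in $w$ and then apply Cauchy--Schwarz in $H$.

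First, I would unfold the conditional covariance using the definition of conditioning on the event $\{Y\ge y\}$:
\begin{align*}
\Cov\big(\langle w,X\rangle,Y\mid Y\ge y\big)
&=\frac{\E\big(\langle w,X\rangle Y\,1_{\{Y\ge y\}}\big)}{\bar F(y)}
-\frac{\E\big(\langle w,X\rangle\,1_{\{Y\ge y\}}\big)\,\E\big(Y\,1_{\{Y\ge y\}}\big)}{\bar F(y)^2}.
\end{align*}
The hypotheses ensure $Y 1_{\{Y\ge y\}}$ is integrable and $X 1_{\{Y\ge y\}}$, $XY 1_{\{Y\ge y\}}$ are Bochner-integrable (the latter being implicitly required for the covariance itself to be well defined; I would state this as the standing integrability assumption for the quantities involved).

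Next I would pull the deterministic vector $w$ out of the Bochner integrals by invoking the fact that a continuous linear functional commutes with the Bochner expectation (see \cite[Prop.~1.1.7]{bochner}):
\begin{align*}
\E\big(\langle w,X\rangle\,1_{\{Y\ge y\}}\big)=\langle w,m_X(y)\rangle,
\qquad
\E\big(\langle w,X\rangle Y\,1_{\{Y\ge y\}}\big)=\langle w,m_{XY}(y)\rangle.
\end{align*}
Plugging these identities into the previous display and factoring $1/\bar F(y)^2$ out, I obtain
\begin{align*}
\Cov\big(\langle w,X\rangle,Y\mid Y\ge y\big)
&=\frac{1}{\bar F(y)^2}\,\big\langle w,\;\bar F(y)\,m_{XY}(y)-m_X(y)\,m_Y(y)\big\rangle
=\frac{\langle w,v(y)\rangle}{\bar F(y)^2}.
\end{align*}

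Finally, maximizing $\langle w,v(y)\rangle$ over $\{w\in H:\|w\|=1\}$ is a direct Cauchy--Schwarz exercise: $\langle w,v(y)\rangle\le\|v(y)\|$ with equality if and only if $w=v(y)/\|v(y)\|$, provided $v(y)\neq 0$. This yields both existence and uniqueness of the maximizer and gives the claimed formula. The only genuine subtlety is the separable-Hilbert-space justification of pulling $\langle w,\cdot\rangle$ through $\E_{\mathcal{B}}$, and flagging the (implicit) integrability of $\langle w,X\rangle Y 1_{\{Y\ge y\}}$ needed for the conditional covariance; everything else is bookkeeping.
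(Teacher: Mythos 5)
Your proposal is correct and follows essentially the same route as the paper's proof: expand the conditional covariance, commute the inner product with the Bochner expectation (the paper invokes an unconditional version of its Lemma~\ref{lem:bochner_random} for this), and conclude by Cauchy--Schwarz on the unit sphere. Your version is in fact slightly more careful on two minor points --- you keep the positive constant factor $1/\bar F(y)^2$ explicit (the paper silently drops it, which is harmless for the $\argmax$) and you flag the integrability of $\langle w,X\rangle Y\,1_{\{Y\ge y\}}$ and the case $v(y)\neq 0$, which the paper glosses over.
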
 
		\noindent It readily appears that $w(y)$ is a linear
		combination of $m_{XY}(y)\in H$ and $ m_X(y)\in H$, two tail-moments of the form $m_{X\vfi(Y)}(y)$ associated with the respective weights $\vfi(y)=y$ and $\vfi\equiv 1$. Under the inverse regression model introduced below, both directions asymptotically align with a common index. We thus investigate the use of more general tail-moments by considering
		\begin{align}
			\label{eq:solution2}
			w_\varphi(y) &= m_{X\varphi(Y)}(y)/{\|m_{X\varphi(Y)}(y)\|},
		\end{align}
		where $\vfi:\R\to \R^+$ is some test function such that \begin{align}
			\label{hyp:test}
		 \vfi\in \RV_\tau(+\infty), \quad \tau\in \R.
		\end{align}
		Let us recall that $\vfi \in \RV_\tau(+\infty)$ is a standard notation meaning that $\varphi$ is regularly varying at infinity with index $\tau$. The definition is recalled below for the sake of completeness, see \cite{bingham_goldie_teugels_1987} for further details.
		\begin{defn}[$\RV_\tau(+\infty)$]
		A positive measurable function $V$ defined on some neighbourhood $[x_0,+\infty)$ of infinity and such that, for all $x>0$, $$ \lim\limits_{t\to +\infty} \f{V(tx)}{V(t)}=x^\tau$$ 
		is said to be regularly-varying at infinity with index $\tau$.
		\end{defn}
			The influence of $\tau$ (and thus of $\varphi$) on the FEPLS direction $w_\varphi(y)$ is discussed in Remark \ref{rmk:role_phi} and illustrated on simulations in Section~\ref{sec-sim}.  
		Before undertaking an inference procedure based on the empirical tail-moments in \eqref{eq:solution2}, let us introduce the four model assumptions \eqref{hyp:2rv}, \eqref{eq:single_index_model}, \eqref{hyp:link} and~\eqref{hyp:vonmises}. 
		An extreme-value framework is considered where the response variable $Y$ is heavy-tailed to the second order. To this end, denote the generalized inverse function or quantile function of the cdf $F$ by $u\mapsto F^-(u):=\inf \{ x \in \R : F(x)\ge u\}$. The tail quantile function is then $t\geq 1\mapsto U(t):=F^-(1-1/t)$ and is assumed to belong to the class of second-order regularly-varying functions:
		\begin{equation}
		    \label{hyp:2rv}
		    U\in 2\RV_{\gamma,\rho}(+\infty), \quad \gamma\in(0,1) \mbox{ and } \rho\leq 0.
		\end{equation}
		\begin{defn}[$2\RV_{\gamma,\rho}(+\infty)$]
			A positive measurable function $V$ defined on some neighbourhood $[x_0,+\infty)$ of infinity is
			said to be second-order regularly-varying with parameters $(\gamma\in\R,\rho\leq0)$, if, for all $x>0$,
			\begin{align*}		\lim\limits_{t\to +\infty}	\ff{A(t)}\left( \f{V(tx)}{V(t)}- x^{\gamma} \right)&=x^{\gamma} H_\rho(x):=x^{\gamma}\int_1^x u^{\,\rho-1}\mathrm{d}u,	\end{align*} 
		where $A$ is an auxiliary function  ultimately of constant sign with $A(t)\to 0$ as $t\to+\infty$.
				\end{defn}
		
		Note that for $y>0$,  $H_\rho(y)=\log(y)1_{\{\rho=0\}} +\f{y^{\,\rho}-1}{\rho}1_{\{\rho<0\}}$. 
			Clearly, $2\RV_{\gamma,\rho}(+\infty)\subset \RV_{\gamma}(+\infty)$. Besides, by \citet[Theorem~2.3.9]{Haan2007}, the following correspondence in terms of regular variation indices holds: 
		$
		U\in 2\RV_{\gamma,\rho}(+\infty)  \iff \bar{F}\in 2\RV_{-1/\gamma,\rho/\gamma}(+\infty) .
		$
		A direct consequence of~\eqref{hyp:2rv} is that $U\in \RV_{\gamma}(+\infty)$ while $\bar{F}\in \RV_{-1/\gamma}(+\infty)$.
		The latter first-order condition is equivalent to assuming that the distribution of $Y$ is in the Fr\'echet maximum domain of attraction with positive tail-index ${\gamma}$,
			see \citet[Theorem~1.5.8]{Bing1989} and  \citet[Theorem~1.2.1]{Haan2007}. 
			This domain of attraction consists of heavy-tailed distributions, such as Pareto, Burr and Student distributions, see~\citet{beigoesegteu2004} for further examples.
			The larger ${\gamma}$ is, the heavier the tail. Going beyond the first order, the control of some tail-moment as in \citet{Stupfler2019_Random_threshold} requires \eqref{hyp:2rv} which is expressed in terms of tail quantile functions. 
			The restriction to ${\gamma}<1$ ensures that the first-order moment { $\mathbb{E}(Y 1_{\{Y\geq y\}})$ exists for all $y\geq 0$}.
		On the auxiliary functions level, \citet[Theorem~2.3.3]{Haan2007} provides $|A|\in {\RV}_{\rho}(+\infty)$ and $ |A| \circ (1/\bar{F}) \in {\RV}_{{\rho}/{\gamma}}(+\infty)$.

		Next, in order to provide theoretical guarantees on the inference method described in the next section and based on~\eqref{eq:solution2}, we consider the following (non-linear) inverse single-index functional model,
		\begin{align}
			\label{eq:single_index_model}
			{X} = g(Y)\, {\beta} +  {\eps},\quad  {\beta}\in H, \quad \|\beta\|=1,
		\end{align}
		with $\eps:\Omega\to H$ a random variable being the noise and an unknown deterministic link function $g:\R\to\R^+$ 
			ultimately behaving like a power function {\it i.e.}
		\begin{align}
			\label{hyp:link}
		 g\in \RV_{\kappa}(+\infty), \quad \kappa>0.  
		\end{align}
		Model~\eqref{eq:single_index_model} is referred to as an inverse regression model since the functional covariate $X$ is written as a function of the response variable $Y$.
The interest of such inverse models is highlighted in~\cite{Cook2007} to compare the theoretical properties of PCA, principal fitted components and SIR, among others.
		The appeal of model~\eqref{eq:single_index_model} stems from the fact that, for instance when $\eps$ is independent of $Y$ and centred, the FEPLS direction~\eqref{eq:solution2} coincides with the true index: $w_\varphi(y)=\beta$ 
		for any test function $\varphi\in\RV_\tau(+\infty)$ and any $y\ge 0$ such that $m_{g\varphi(Y)}(y)\in(0,+\infty)$. Heuristically, assuming more generally that $Y$ and $\eps$ are dependent but $\eps$ has a small contribution in the tail regime of $Y$, {\it i.e.} $X \simeq g(Y)\,\beta$, one expects that 
		$w_\varphi(y)\to \beta$ as $y\to+\infty$. 
		Moreover, in such a situation, model~\eqref{eq:single_index_model} yields the approximate single-index forward model $Y\simeq g^{-1}(\langle \beta, X\rangle)$, where $g^{-1}$ denotes an asymptotic inverse of $g$, which exists since $\kappa>0$. 
		
		To deal with the conditional expectation involving the tail event $\{Y\ge y\}$ for large $y$, as well as with the distribution of order statistics, we need some regularity on the distribution of $Y$. So let us assume that the density $f:=F'$ of $Y$ exists and satisfies the von Mises condition
		\begin{align}
			\label{hyp:vonmises}
			f\in\RV_{-1/\gamma - 1}(+\infty).
		\end{align}
		By Karamata's theorem, \eqref{hyp:vonmises} implies $\bar{F}\in \RV_{-1/\gamma}(+\infty)$, consistently with~\eqref{hyp:2rv}.

		\section{Inference and consistency}
		\label{sec-inf}

		Let $(X_i,Y_i)_{1\le i \le n}$ be independent copies of $(X,Y)\in H\times \R$. Let $k:=k_n\to+\infty$ be an intermediate sequence, {\it i.e.} integer deterministic such that $ k/ n\to 0$ as $n\to+\infty$, and consider a deterministic sequence $y_{n,k}$ such that $y_{n,k}\sim U(n/k)$ {\it i.e.} $y_{n,k}/U(n/k)\to 1$ as $n\to +\infty$. Denoting the order statistics of the sample $(Y_i)_{1\le i \le n}$ by $Y_{1,n}\le \cdots \le Y_{n,n}$, one may see that the empirical version of $U(n/k)$ is $Y_{n-k+1,n}$. In the sequel and for $\alpha > 0$, we denote by $\mathcal{C}^{0,\alpha}([0,1], \mathbb{R})$ the space of $\alpha$-Hölder continuous functions, \emph{i.e.}, the set of  $f: [0,1] \to \mathbb{R}$ such that there exists a finite constant $c > 0$ satisfying
		\[
		|f(x) - f(y)| \le c |x-y|^\alpha \qquad \text{for all } x, y \in [0,1].
		\]
		For $h\in\mathcal{C}^{0,\alpha}([0,1], \R)$, the H\"older norm of $h$ is defined as $\|h\|_{0,\alpha}:=\sup_{x\in[0,1]}|h(x)|+\sup_{x\neq y}{|h(x)-h(y)|}/{|x-y|^{\alpha}}$, with the convention $\|h\|_{0,\alpha}=+\infty$ whenever $h\notin\mathcal{C}^{0,\alpha}([0,1],\R)$.
		Concerning the noise, denoting by $\eps_i:=X_i-g(Y_i)\beta$, $1\le i\le n$, the noise associated with the $i$-th observation under model~\eqref{eq:single_index_model}, we require the existence of $q>2$ and $\alpha_e > 0$ independent of $n$ such that \begin{align}
			\label{hyp:noise_cond}
			\limsup_{n\to +\infty}\sup_{y\geq 0} \E\left(\|\eps_1\|_{0,\alpha_e}^{q}\mid Y_{n-k+1,n} = y\right)&<+\infty.
		\end{align}
		This uniform control of the $q$-mean of $\|\eps_1\|_{0,\alpha_e}$ given 
			the extreme order statistic $Y_{n-k+1,n}$ implies that $\mathbb{E}(\|\eps\|^q)<+\infty$. Since $q>2$, one has $\E(\| \eps \|)<+\infty$ and, by \citet[Proposition~1.2.2]{bochner}, $\eps$ is also Bochner-integrable since $\eps$ is strongly $\p$-measurable by definition.
			
		For any random variable $W$, the empirical counterpart of the tail-moment $m_W(y)$ (whenever it exists) is defined by 
		$$
		\hat m_W(y):= \ff{n}\sum_{i=1}^n W_{i} 1_{\{ Y_i\ge y\}},
		$$
		based on the $n$-sample $(W_i,Y_i)_{1\le i \le n}$.
		Two situations are investigated. When the full functions $X_1,\dots,X_n$ are observed, it is then possible to estimate $w_\varphi(y)$ by replacing the tail moment in~\eqref{eq:solution2} by its empirical counterpart defined as
		$$ {\hat \beta}_{\vfi}(y) := \f{ {\hat m_{X\vfi(Y)}}(y)}{\| {\hat m_{X\vfi(Y)}}(y)\|}  \mbox{ with }
		\hat m_{X\vfi(Y)  }(y) = \ff{n} \sum_{i=1}^n   X_i \vfi(Y_i)1_{\{ Y_i\ge y \}}.$$
		The FEPLS estimator ${\hat \beta}_{\vfi}(y)$
		is thus a linear combination of the $X_i$'s whose  associated $Y_i$'s are located in the tail of $\bar F$.

		Let us now consider the more realistic situation where each function $X_i$, $i\in\{1,\dots,n\}$, is observed on a finite deterministic grid $0\le t_1<\dots<t_N\le 1$ only. Let $\{I_1,\dots,I_N\}$ be a partition of $[0,1]$ into intervals such that $t_j\in I_j$ for any $j\in\{1,\dots,N\}$ and
		\begin{align}
			\label{hyp:grid}
			\max_{1\le j\le N}|I_j|\le c_0/N,
		\end{align}
		for some constant $c_0\ge1$ independent of $N$, where $|I_j|$ denotes the length of $I_j$; the regular design $t_j=(2j-1)/(2N)$ with $I_j=[(j-1)/N,j/N)$ is the prototypical example. The discrete record $(X_i(t_1),\dots,X_i(t_N))$ is then summarized by the piecewise-constant reconstruction
		$$
		\tilde X_i := \sum_{j=1}^N X_i(t_j)\, 1_{I_j}\in H .
		$$
		Note that, under model \eqref{eq:single_index_model} with $\beta\in\mathcal{C}^{0,\al_b}([0,1],\R)$ and $\eps\in\mathcal{C}^{0,\al_e}([0,1],\R)$ a.s., the trajectories of $X$ are continuous, so that the pointwise evaluations $X_i(t_j)$ are well defined. In this context, the estimator of $\beta$ is given by
		$$
		{\tilde \beta}_{\vfi}(y) := \f{ {\hat m_{\tilde  X\vfi(Y)}}(y)}{\| {\hat m_{\tilde  X\vfi(Y)}}(y)\|} \mbox{ with }
		\hat m_{\tilde  X\vfi(Y)  }(y) = \ff{n} \sum_{i=1}^n   \tilde X_i \vfi(Y_i)1_{\{ Y_i\ge y \}}.
		$$

		\subsection{Consistency under the inverse single-index model}

		Let us now state our consistency result  under the inverse single-index model. 
		\begin{theorem}\label{th:main}
			Assume $\beta \in \mathcal{C}^{0,\alpha_b}([0,1],\R)$ for some $\alpha_b \in(0,1]$ and that \eqref{eq:single_index_model} holds. Assume 
			\begin{align}\label{hyp:2cgamma}
				0< 2(\kappa+\tau)\gamma &<1,\\
				\label{hyp:qcgamma}
				q\kappa\gamma &>1,
			\end{align} 
			along with \eqref{hyp:test}, \eqref{hyp:2rv}, \eqref{hyp:link}, \eqref{hyp:vonmises}, \eqref{hyp:noise_cond} and the grid condition \eqref{hyp:grid}. Assume that $\vfi$ and $g$ are continuously differentiable in a neighbourhood of infinity such that $t(\vfi g)'(t)/(\vfi g)(t) \to \tau+\kappa$ as $t\to+\infty$. Then, for any intermediate sequence $k:=k_n\to +\infty$ such that $\sqrt{k}A(n/k)=O(1)$, any deterministic sequence $y_{n,k}\sim U(n/k)$ and any deterministic sequence $N:=N_n\to +\infty$, it holds that:
			\begin{align*}
				\| \tilde \beta_\vfi(Y_{n-k+1,n}) - \beta \| = O_\p\big( \delta_{n,k} \vee N^{-\al_b}\big) \xrightarrow[n\to +\infty]{\p}0,
			\end{align*} 
			where $\delta_{n,k}:=(g(y_{n,k})(k/n)^{1/q})^{-1}$.
		\end{theorem}
		
		Let us highlight that the sub-rate $\delta_{n,k}$ 
		is a regularly-varying function of $k/n$ with index $\gamma \kappa -1/q>0$.
		It only depends on the tail distribution of $Y$, the integrability order of the noise $\eps$ and the asymptotic behaviour of the deterministic link function~$g$.
		
		We now state some comments about the set of assumptions and their implications.
		\begin{Remark}[The noise $\varepsilon$ does not contribute to the extremes]
			Assumption \eqref{hyp:link} 
			combined with $U\in \RV_{\gamma}(+\infty)$ implies that  $g(Y)$ is heavy-tailed with tail-index $\gamma_{g(Y)}:=\kappa{\gamma}$.
			Similarly, when $\tau>0$, \eqref{hyp:test} and $U\in \RV_{\gamma}(+\infty)$ yield that  $\vfi(Y)$ is heavy-tailed with tail-index $\gamma_{\vfi(Y)}:=\tau{\gamma}$.
			Recall that \eqref{hyp:noise_cond} entails  $\mathbb{E}(\|\eps\|^q)<+\infty$ which may be interpreted as an assumption on the tail of $\|\eps\|$. It is satisfied, for instance, by distributions with exponential-like tails such as Gaussian, Gamma or Weibull distributions. 
			Heuristically, 
			it would imply that the tail-index associated with
			$\|\eps\|$ is such that $\gamma_{\|\eps\|}<1/q$.
			Condition~\eqref{hyp:qcgamma} thus imposes that $\gamma_{g(Y)}>\gamma_{\|\eps\|}$, meaning that $g(Y)$ has a heavier right tail than $\|\eps\|$.
			Under model~\eqref{eq:single_index_model}, one has $\gamma_{\|X\|}={\gamma_{g(Y)}}$ and thus the tail behaviours of $\langle\beta, X\rangle$ and $\|X\|$ are driven by $g(Y)$, 
			which is the desired property. 
		\end{Remark}
		\begin{Remark}[Rate of convergence]
			(i) The regime $y_{n,k}\sim U(n/k)$ entails $n\bar{F}(y_{n,k})\sim k\to \infty$ as $n\to\infty$ and it still holds when $y_{n,k}$ is replaced by its empirical counterpart, namely $Y_{n-k+1,n}$. Hence the expected number of tail observations, {\it i.e.} of observations above the (random or deterministic) threshold, grows with the sample size. This roughly ensures that the threshold does not grow too fast so that there are enough data points in the inference scheme. The number of such extreme points is ruled by the usual condition $\sqrt{k}A(n/k)=O(1)$ in extreme-value theory, which is fulfilled when $k \sim cn^{-2\rho/(1-2\rho)}$, $\rho<0$, $c>0$, provided $|A(t)|$ is asymptotically proportional to $t^{\rho}$, see \citet[Equation~(3.2.10)]{Haan2007}.
			
			(ii) Condition~\eqref{hyp:2cgamma} implies the existence of the second moment of $\varphi g(Y)\mathbf{1}{\{Y\geq y\}}$ for all $y\geq 0$. To this end, one should pick $\tau<{1}/(2\gamma)-\kappa$, this upper bound being either positive or negative. 
			
			(iii) The constraint~\eqref{hyp:qcgamma} is the translation in the regular variation language of the fact that $\delta_{n,k}\to 0$ as $n\to\infty$ regardless of the underlying slowly-varying part of $g$ and $\bar{F}$. 
			Under the choice of $k$ in {\rm(i)}, the rate $\delta_{n,k}$ is of order $ n^{(1/q-\gamma\kappa)/(1-2\rho)}$, up to slowly-varying factors.

		\end{Remark}

		\begin{Remark}[Comparison with empirical-process inverse regression]\label{rmk:comparison_ep}
The authors in \cite{Portier2016} and \cite{Aghbalou2024} also study the conditional mean of the covariate given the response, but through an
empirical-process lens. One may roughly distinguish three
essential differences. First, their setting is finite-dimensional,
whereas our work is functional. Next and most importantly, they describe the entire
quantile-indexed process and prove a functional central limit theorem, at the
parametric rate in the central regime of \cite{Portier2016} and at the tail
rate in the extreme regime of \cite{Aghbalou2024}. We do not target
the whole process: the estimator is evaluated at a single data-driven extreme
level, where the effective information is carried by the largest
observations rather than by the full sample. Our conclusion is accordingly a
convergence in probability with an explicit rate and not weak
convergence. Finally, concerning the model and tools, the empirical-process route rests on a linearity condition on the covariate and on
bounded index weights. Our model assumes no linearity, centring or
independence of the noise, and replaces them by a heavy-tail,
regularly-varying structure, at the price of a constraint linking the admissible
growth of the weight to the tail index. The machinery
is correspondingly that of extreme-value theory, regular variation and order
statistics, rather than Donsker classes and the functional delta method.
\end{Remark}

\begin{Example}\label{ex:burr}
Let $Y$ be Burr distributed, with $\bar F(y)=(1+y^{-\rho/\gamma})^{1/\rho}$ for $y\ge 0$, $\gamma\in(0,1)$ and $\rho<0$, so that \eqref{hyp:2rv} and \eqref{hyp:vonmises} hold. Let $g(y)=y^{\kappa}$ with $\kappa>0$ and $\vfi(y)=y^{\tau}$ with $\tau\in\R$, so that \eqref{hyp:link} and \eqref{hyp:test} hold. Finally, let $\eps$ be independent of $Y$ and distributed as $\sigma B^{H}+\mu$, where $\sigma>0$, $\mu:[0,1]\to\R$ is a deterministic Lipschitz function and $B^{H}$ is a fractional Brownian motion on $[0,1]$ with Hurst parameter $H\in(0,1)$. The trajectories of $B^{H}$ are almost surely $\alpha_e$-H\"older continuous for any $\alpha_e<H$ and, by Fernique's theorem, $\E\big(\|B^{H}\|_{0,\alpha_e}^q\big)<+\infty$ for all $q>0$, so that, by independence, condition~\eqref{hyp:noise_cond} holds for every $q>2$. Conditions~\eqref{hyp:2cgamma} and~\eqref{hyp:qcgamma} then reduce to $-\kappa<\tau<1/(2\gamma)-\kappa$ and $q>1/(\kappa\gamma)$: for instance, with $\gamma=1/3$ and $\kappa=1$, every $\tau\in(-1,1/2)$ is admissible. Consequently, for any $\beta\in\mathcal{C}^{0,\alpha_b}([0,1],\R)$ with $\|\beta\|=1$ and $\alpha_b\in(0,1]$, all the assumptions of Theorem~\ref{th:main} are satisfied, and its conclusion holds. In this setting, model~\eqref{eq:single_index_model} also admits the forward interpretation $Y\simeq\langle\beta,X\rangle^{1/\kappa}$ in the tail regime. A variant of this example, in which the noise scale is allowed to depend on $Y$, underlies the simulation design of Section~\ref{sec-sim}.
\end{Example}

\subsection{A model-free asymptotic result on the empirical tail-moment}

The consistency established in Theorem~\ref{th:main} and in
Proposition~\ref{prop:1} of \ref{sec-proofs} ultimately relies only on the behaviour of the empirical tail-moment $\hat m_W$, the inverse single-index model~\eqref{eq:single_index_model} being used solely to identify the limiting direction $\beta$. We now isolate this ingredient for a generic Hilbert space $H$ and $W\in H$, and drop~\eqref{eq:single_index_model} together with any assumption on the direction of the conditional mean.

In the following, $\E_\mathcal{F}$ denotes the conditional expectation given a $\sigma$-field $\mathcal{F}$ and, if moreover the integrand is Banach-valued, we denote by $\E_{\mathcal{F},\mathcal{B}}$ the Bochner version of the conditional expectation, see \citep{bochner} for the technical details.

Let $W:\Omega\to H$ be Bochner-integrable. By construction, $W$ may be decomposed along its
conditional mean as
\begin{align}
	\label{eq:cond_mean}
	W=\mu(Y)+\xi,\qquad \mu(Y):=\E_{\mathcal{F},\mathcal{B}}(W)\ \text{with}\ \mathcal{F}=\sigma(Y),\qquad \xi:=W-\mu(Y),
\end{align}
where $\mu:\R\to H$ denotes the associated regression function and $\E_{\mathcal{F},\mathcal{B}}(\xi)=0$,
so that $m_W(y)=m_{\mu(Y)}(y)$ by conditioning. The
decomposition~\eqref{eq:cond_mean} is merely the definition of the conditional mean: it imposes no
model on $(W,Y)$, in particular no linearity, and $\mu$ is the forward regression of $W$ given $Y$.
We denote by $s(y):=\|\mu(y)\|$ the norm of the conditional mean and by
$a(y):=\mu(y)/s(y)$ its direction, for $y$ large enough so that $s(y)>0$. No assumption is made on
$a(y)$: it may rotate arbitrarily and need not converge. Accordingly, the natural target is the
tail-moment $m_W(y)$ itself rather than a fixed direction, and the limiting direction is expressed
through
\begin{align}
	\label{eq:tail_dir}
	\bar a(y):=\f{m_W(y)}{\|m_W(y)\|},\qquad\text{whenever } m_W(y)\neq 0 .
\end{align}
Here $\bar a(y)$ is the direction of the tail-moment, a tail average of the pointwise directions
$a(t)$, $t\ge y$, that need not coincide with any of them.
In the spirit of~\eqref{hyp:link}, we assume that $s:\R\to\R^+$ is ultimately positive with
\begin{align}
	\label{hyp:mu_norm}
	s\in \RV_\lambda(+\infty),\quad \lambda>0, \qquad 2\lambda\gamma<1,
\end{align}
and, concerning the residual $\xi$, that there exists $q>2$
		such that
		\begin{align}
			\label{hyp:res_cond}
			\sup_{y} \E\left(\|\xi\|^{q}\mid Y = y\right)<+\infty,
		\end{align}
		where the supremum is over the support of $Y$. By Lemma~\ref{lem:cond_sample}, this implies $\sup_{y}\E\big(\|\xi_1\|^q\mid Y_{n-k+1,n}=y\big)<+\infty$ uniformly in $n$ and $k$, which is the form used in the proofs,
together with the index condition, in the same vein as~\eqref{hyp:qcgamma},
\begin{align}
	\label{hyp:q_mu}
	q\lambda\gamma>1.
\end{align}
Note that~\eqref{hyp:mu_norm} and~\eqref{hyp:q_mu} are compatible for any $q>2$ since they read
$1/(q\gamma)<\lambda<1/(2\gamma)$. The upper bound $2\lambda\gamma<1$ ensures that the second
tail-moment $m_{s^2(Y)}$ is finite, which controls the fluctuation of $\hat m_W$; it is also the index
condition required by \citet[Theorem~2]{Stupfler2019_Random_threshold} with $a=\lambda$, used in
regime~(i) of Proposition~\ref{prop:modelfree} below, where the norm $s$ is moreover assumed
continuously differentiable in a neighbourhood of infinity. Beyond the conditional centring
$\E_{\mathcal{F},\mathcal{B}}(\xi)=0$, which holds by construction, and the moment
control~\eqref{hyp:res_cond}, no assumption is made on the residual $\xi$; in particular, it is not
assumed independent of $Y$. Finally, we set
$c_W(y):=\|m_W(y)\|/m_{s(Y)}(y)\in[0,1]$ and consider the non-degeneracy condition
		\begin{align}
			\label{hyp:nondeg}
			\liminf_{y\to +\infty} c_W(y)>0 ;
		\end{align}
		since $U(n/k)\to+\infty$, this implies $\liminf_{n\to+\infty} c_W(U(n/k))>0$, the form used in the proof.
The quantity $c_W(y)$ measures the loss of norm due to the averaging of the directions of $\mu$ over
the tail: $c_W\equiv 1$ when $\mu$ keeps a constant direction on $[y,+\infty)$, while $c_W(y)\to 0$
only when the directions $\{a(t):t\ge y\}$ cancel asymptotically. Condition~\eqref{hyp:nondeg} rules
out this cancellation, and nothing more.

\begin{Prop}\label{prop:modelfree}
	Under \eqref{eq:cond_mean}, assume \eqref{hyp:mu_norm}, \eqref{hyp:res_cond} and \eqref{hyp:q_mu}, and that
	\eqref{hyp:vonmises} holds with $\gamma\in(0,1)$, so that $\bar F\in\RV_{-1/\gamma}(+\infty)$. Let
	$k:=k_n\to+\infty$ be a deterministic integer sequence with $k/n\to0$. Then
	$\|m_W(Y_{n-k+1,n})\|\le m_{s(Y)}(Y_{n-k+1,n})$ and
	\begin{align*}
		\f{\hat m_W(Y_{n-k+1,n})-m_W(Y_{n-k+1,n})}{m_{s(Y)}(Y_{n-k+1,n})}\longrightarrow0\quad\text{in } H,
	\end{align*}
	in either of the two following alternative regimes:
	\begin{enumerate}
		\item[\rm(i)] if \eqref{hyp:2rv} holds, $\sqrt{k}A(n/k)=O(1)$ and $s$ is continuously
		differentiable in a neighbourhood of infinity with $ts'(t)/s(t)\to\lambda$ as $t\to+\infty$, then the
		convergence holds in probability, at rate $O_\p(1/\sqrt{k})$;
		\item[\rm(ii)] if $k_n\ge n^{\theta}$ eventually for some $\theta>2\lambda\gamma$, with none of the
		assumptions of regime~{\rm(i)} required, then the convergence holds almost surely.
	\end{enumerate}
	If moreover \eqref{hyp:nondeg} holds, then $\p\big(m_W(Y_{n-k+1,n})\neq0\big)\to 1$ in regime~{\rm(i)}, while $m_W(Y_{n-k+1,n})\neq0$ for $n$ large enough almost surely in regime~{\rm(ii)}. Moreover,
	\begin{align*}
		\f{\hat m_W(Y_{n-k+1,n})}{\|\hat m_W(Y_{n-k+1,n})\|}-\bar a(Y_{n-k+1,n})\longrightarrow 0
		\qquad\text{and}\qquad
		\f{\|\hat m_W(Y_{n-k+1,n})\|}{\|m_W(Y_{n-k+1,n})\|}\longrightarrow1,
	\end{align*}
	in the same mode as the first claim.
\end{Prop}

    First, Proposition~\ref{prop:modelfree} contains the inverse single-index model by taking $W=X\vfi(Y)$. When the noise is conditionally centred, $\E_{\mathcal{F},\mathcal{B}}(\eps)=0$, the regression function is
    $\mu(Y)=\vfi(Y)g(Y)\beta$, so that $s=\vfi g\in\RV_{\tau+\kappa}$ (hence $\lambda=\tau+\kappa$), $\xi=\vfi(Y)\eps$, $\bar a\equiv\beta$ and $c_W\equiv1$; assumptions \eqref{hyp:mu_norm}, \eqref{hyp:res_cond} and \eqref{hyp:q_mu} then read $2(\tau+\kappa)\gamma<1$, the conditional moment bound on $\|\vfi(Y)\eps\|$ (a joint condition on $(\vfi,\eps)$ which, for $\tau>0$, is not implied by~\eqref{hyp:noise_cond} alone) and
    $q(\tau+\kappa)\gamma>1$, the first being exactly~\eqref{hyp:2cgamma}, while~\eqref{hyp:nondeg} holds automatically, and the proposition recovers the consistency of $\hat\beta_\vfi$ towards $\beta$. The unweighted choice $W=X$ ($\vfi\equiv1$) is the special case $s=g$, $\lambda=\kappa$. In parallel, the model-free statement neither centres nor constrains the noise beyond~\eqref{hyp:res_cond}, it holds for an arbitrary Bochner-integrable $W$ in any separable Hilbert space, and it lets the conditional-mean
    direction $a(\cdot)$ vary in the tail, the limiting direction $\bar a$ being then a tail average rather than a fixed vector. Next, the rate in regime~(i) is that of estimating the tail-moment of the conditional mean alone: since $\xi$ is conditionally centred by construction, its contribution $\hat m_\xi(Y_{n-k+1,n})$,
    measured relative to $m_{s(Y)}$, is of order $o_\p(k^{-1/2})$, hence negligible against the fluctuation of $\hat m_{\mu(Y)}$, and the residual enters the rate only through the noise-free term. This contrasts with Lemma~\ref{lem:norm_noise}, where the noise of~\eqref{eq:single_index_model} is not conditionally centred: its tail-moment then carries a non-vanishing bias of order $\delta_{n,k}$, which governs the rate whenever $\delta_{n,k}\gg k^{-1/2}$, that is, when $k$ grows fast enough. Finally, the residual $\xi$ is required to be lighter-tailed than the conditional mean: $s(Y)$ is heavy-tailed with index $\lambda\gamma$, the moment bound~\eqref{hyp:res_cond} caps the tail index of $\|\xi\|$ at $1/q$, and~\eqref{hyp:q_mu} places $1/q$ below $\lambda\gamma$. The heavier the conditional-mean norm, the fewer residual moments are needed; this is the model-free counterpart of the constraint $\gamma_{g(Y)}>\gamma_{\|\eps\|}$ discussed after Theorem~\ref{th:main}.

\begin{Remark}[On the test function $\vfi$]
\label{rmk:role_phi}
The optimisation problem of Proposition~\ref{prop:solution} naturally leads to the weight $\vfi(y)=y$, but there is no reason to restrict to it, so we replace it by a general $\vfi\in\RV_\tau$ in~\eqref{eq:solution2}. The function $\vfi$ tunes the weight applied to each selected tail observation. It is not entirely free: the conditions of Theorem~\ref{th:main} restrict $\tau$ to $-\kappa<\tau<1/(2\gamma)-\kappa$. In particular, $\tau=0$, and thus the unweighted choice $\vfi\equiv 1$, is admissible as soon as $2\kappa\gamma<1$. The main point is that the choice of $\vfi$ does not affect the estimation of $\beta$ at first order. Under the inverse single-index model, the limiting direction
is $\beta$ for every admissible $\vfi$, because the noise does not contribute to the extremes and the conditional mean is aligned with $\beta$ in the tail. The rate $\delta_{n,k}$ is not affected either and in parallel, condition~\eqref{hyp:qcgamma} only involves $\kappa$. Hence no admissible $\vfi$ gives a wrong direction or a slower rate. The function $\vfi$ still keeps two effects. First, it can change the direction, but only outside the single-index model, as in Proposition~\ref{prop:modelfree}. The element $w_\vfi(y)$ points in the direction of the average of $\E(X\mid Y=t)$ over the tail $t\ge y$, each level $t$ being weighted by $\vfi(t)$, and this average depends on $\vfi$ only when the direction of $\E(X\mid Y=t)$ varies with $t$. In contrast, under the single-index model this direction is constant and equal to $\beta$. Second, $\vfi$ changes the finite-sample behaviour, since it controls the weight given to the most extreme observations, hence the variance and the effective number of exceedances. In this respect $\vfi$ is a
tuning parameter like the number $k$ of order statistics, without changing the rate. Finally, this invariance gives a simple check of the model: since the limiting direction does not depend on $\vfi$ under the single-index model, a direction that changes substantially with $\vfi$ indicates that this model does not hold.
\end{Remark}

\begin{Remark}[Bivariate functional covariates]
\label{rmk:multivariate}
All our results only require $H$ to be a separable Hilbert space. A bivariate functional covariate $X=(X^{(1)},X^{(2)})$ is thus a special case, with $H=L^2([0,1],\R^2)$; so is a covariate on a two-dimensional domain, with $H=L^2([0,1]^2)$. Propositions~\ref{prop:solution}, \ref{prop:modelfree} and~\ref{prop:1} still hold, as their proofs use only the inner product of $H$ and the noise through its norm $\|\eps\|$. Proposition~\ref{prop:modelfree} is moreover already stated for an arbitrary separable Hilbert space. Theorem~\ref{th:main} carries over by the same arguments, the reconstruction on a grid being carried out component-wise, with the rate then being the worst of the two component rates. Component-wise, the inverse single-index model~\eqref{eq:single_index_model} writes as $X^{(j)}=g(Y)\beta^{(j)}+\eps^{(j)}$, with one link $g$, one index $\beta=(\beta^{(1)},\beta^{(2)})$ and a common tail-index $\kappa\gamma$. With distinct tail-indices $g_j\in\RV_{\kappa_j}$, the model no longer holds and Proposition~\ref{prop:modelfree} applies: $\hat\beta_\vfi$ still converges but to the direction of the heavier component, the other not contributing to the extremes. Estimating both, or several indices, is a multi-index problem beyond model~\eqref{eq:single_index_model}.
\end{Remark}

		\section{Illustration on simulated data} \label{sec-sim}

		The performance of our method is assessed by a Monte Carlo simulation experiment with $500$ independent repetitions. The ambient space is $H=L^2([0,1])$ endowed with its usual inner product $\langle f, g \rangle = \int_0^1 f(t)g(t)dt$. Functional observations are discretized on a regular grid of length $N \in \{101, 1001\}$ with nodes $t_j = (j-1)/(N-1)$ for $1\le j\le N$. To implement the $L^2$-inner product in the discretized setting, we consider the diagonal positive definite matrix $\Psi\in\mathbb{R}^{N\times N}$ given by $\Psi_{j,j'}=|I_j|\,1_{\{j=j'\}}$ for all $(j,j')\in\{1,\dots,N\}^2$, so that $\langle \tilde X_i,\tilde X_\ell\rangle = \mathcal{X}_i^\top\Psi\,\mathcal{X}_\ell$ exactly, where $\mathcal{X}_i:=(X_i(t_1),\dots,X_i(t_N))^\top$, $1\le i \le n$. For any discretized functions $u=(u(t_j))_{1\le j\le N}, v=(v(t_j))_{1\le j\le N}\in\mathbb{R}^N$, the quadratic form $\langle u,v\rangle_{\Psi} := u^\top \Psi v$ induces an inner product and all subsequent computations use $\|\cdot\|_{\Psi} := \sqrt{\langle\cdot,\cdot\rangle_{\Psi}}$ as the induced norm.

		\subsection{Experimental design}
		\label{sub-expe}
		
		We draw a data sample of $n=500$ independent replications $(X_i,Y_i)$ of $(X,Y)$ using the following scheme:
		\begin{itemize}
			\item $Y$ has a Burr distribution, \emph{i.e.}, $\bar{F}(y)=(1+y^{-\rho/\gamma})^{1/{\rho}} \in 2\RV_{-1/\gamma,\rho/\gamma}(+\infty)$ with ${y}\ge 0$, $\gamma \in (0,1)$ being the tail-index and $ \rho <0$ being the second-order parameter.  Here, $\gamma\in\{1/3, 1/2, 9/10\}$ while $\rho \in\{-2\gamma,-\gamma/2\}$. 
			\item $\eps\mid Y={y}$ has the same distribution as $\sigma ({y})B^{H}_{{y}}+\mu({y})$ where $B^H_{{y}}$ is the fractional Brownian motion (fBm) on $[0,1]$ with conditional Hurst parameter $H({y})\in (0,1)$, $\sigma({y})$ is the conditional noise deviation and $\mu({y})$ the conditional mean. 
			\item $\beta:t\in[0,1]\mapsto \sqrt{2}\sin(2\pi t)$ and $g:y\in \R^+ \mapsto y^\kappa$ with $\kappa\in \{ 1,3/2,2\}$.
		\end{itemize}
		Hence, conditionally on $Y=y$, the noise process based on $B^H_{{y}}$ is centered Gaussian with conditional covariance function 
		$$
		(s,t)\mapsto \f{\sigma^2({{y}})}{2}\left( t^{2H({{y}})}+s^{2H({{y}})}-\left|t-s\right|^{2H({{y}})}\right).
		$$
		Assumption~\eqref{hyp:noise_cond} thus holds for all $q>0$.
		Note that $H({{y}})=1/2$ yields the Brownian motion $B_{{y}}$ with mean $\mu({{y}})$ and variance $\sigma^2({{y}})$.
		
		Once both the response variable $Y$ and the noise $\eps$ are simulated, and the deterministic functions $\beta$ and $g$ are chosen, the covariate sampling $\{\tilde X_1,\dots,\tilde X_n\}$ is readily derived from~\eqref{eq:single_index_model}. 
		The choice of $\sigma$ is guided by the following reasonable requirement: One should ensure that the contribution in \eqref{eq:single_index_model} of $\eps$ does not overwhelm $g(Y)$, but also conversely, in order to avoid a trivial case where the noise is not impacting. To this end, we propose to pick $\sigma({{y}})=g({{y}})/10$. Moreover, we fix  $(H,\mu)\equiv (1/3,200)$ regardless of $Y=y$.
		
		The FEPLS estimator is implemented with $\varphi(y)=y^\tau$ as test function. The selected values of $\tau$ depend on the pair $(\gamma,\kappa)$ in order to fulfil condition~\eqref{hyp:2cgamma}. They are given in Table~\ref{tab-tau}.
		\begin{table}[ht]
			\centering
			\begin{tabular}{|l|r|r|r|}
				\hline
				& $\gamma=1/3$ & $\gamma=1/2$ & $\gamma=9/10$ \\
				\hline
				$\kappa=1$  & $\{0,-1,-2\}$ &  $\{-1,-2,-3\}$ &  $\{-1,-2,-3\}$ \\
				$\kappa=3/2$  & $\{-1,-2,-3\}$ &   $\{-1,-2,-3\}$ &  $\{-1,-2,-3\}$ \\
				$\kappa=2$  &  $\{-1,-2,-3\}$ &  $\{-2,-3,-4\}$ &   $\{-2,-3,-4\}$ \\
				\hline
			\end{tabular}
			\caption{Selected values of $\tau$ in the nine simulated situations.}
			\label{tab-tau}
		\end{table}
		
		\subsection{Selection of the threshold}
		\label{sub-seuil}

		We consider the following data-driven selection rule of $k\in\{1,\dots,n\}$ based on the maximization of the sample covariance between  $Y$ and {$\langle \tilde \beta_{\vfi}( Y_{n-k+1,n}),\tilde X  \rangle$} when $Y\geq Y_{n-k+1,n}$:
		\begin{align*}
			\tilde k := \argmax_{5\le k\le n/5} r(k),
		\end{align*}
		with $r(k)$ being
		\begin{align*}
			\frac{1}{k} \sum_{i=1}^k Y_{n-i+1,n} \langle \tilde \beta_{\vfi}(Y_{n-k+1,n}),\tilde X_{(n-i+1,n)}\rangle - \frac{1}{k} \sum_{i=1}^k Y_{n-i+1,n} \frac{1}{k} \sum_{i=1}^k \langle \tilde \beta_{\vfi}(Y_{n-k+1,n}),\tilde X_{(n-i+1,n)} \rangle,
		\end{align*}
		and where $\tilde X_{(n-i+1,n)}$ denotes the concomitant of $Y_{n-i+1,n}$, \emph{i.e.}, the random variable $\tilde X_s$ with $s\in \{1,\ldots,n\}$ being the unique index such that $Y_s=Y_{n-i+1,n}$.
		Let us highlight that the above selection rule imposes $\tilde k\geq 5$ in order to prevent instabilities of
		estimates built on too few data points.
		\begin{Remark}   Let us denote ${\mathcal X}_i=(X_i(t_1),\dots,X_i(t_N))^\top$. For any $i,\ell\in\{1,\dots,n\}$, one has $\langle \tilde X_i,\tilde X_\ell\rangle = \langle \mathcal{X}_i, \mathcal{X}_\ell\rangle_\Psi$ so that
			\begin{align*}
				\| \hat m_{\tilde X\vfi(Y)}(y)\|^2
				&=\frac{1}{n^2} \sum_{i,\ell=1}^n \langle \tilde X_i,\tilde X_\ell\rangle \vfi(Y_i)\vfi(Y_\ell)1_{\{ Y_i\ge y \}}1_{\{ Y_\ell\ge y \}} \\
				&=\frac{1}{n^2} \sum_{i,\ell=1}^n \langle \mathcal{X}_i, \mathcal{X}_\ell\rangle_\Psi \vfi(Y_i)\vfi(Y_\ell)1_{\{ Y_i\ge y \}}1_{\{ Y_\ell\ge y \}}\\
				&= \left\| {\mathcal V}_\vfi(y)\right\|_\Psi^2
			\end{align*}
			where 
			$
			{\mathcal V}_\vfi(y):= \frac{1}{n} \sum_{i=1}^n  \mathcal{X}_i  \vfi(Y_i) 1_{\{ Y_i\ge y \}} \in \R^N
			$
			can be interpreted as a sampled version of $\hat m_{\tilde X\vfi(Y)}(y)$. Now, to compute $\langle \tilde \beta_{\vfi}( y),\tilde X_s \rangle$, we write, for the numerator, 
			\begin{align*}
				\langle \hat m_{\tilde X\vfi(Y)}( y),\tilde X_s \rangle &= \ff{n} \sum_{i=1}^n  \langle \tilde X_i, \tilde X_s \rangle \vfi(Y_i)1_{\{ Y_i\ge y \}}
				= \ff{n} \sum_{i=1}^n  \langle  {\mathcal X}_i, {\mathcal X}_s \rangle_\Psi \vfi(Y_i)1_{\{ Y_i\ge y \}},
			\end{align*}
			the denominator being already given by:
			$   \| \hat m_{\tilde X\vfi(Y)}(y)\| = \left\| {\mathcal V}_\vfi(y) \right\|_\Psi$.
		\end{Remark}
		
		\subsection{Results}
		
		Figure~\ref{fig:beta_estim_exceedance} displays the dot product curves 
		$k \in\{1,\dots,250\}\mapsto\langle \tilde \beta_{\vfi}(Y_{n-k+1,n}),\beta\rangle$
		averaged over the $500$ Monte Carlo replications of the nine considered simulated situations.
		
		\begin{Remark} Computation of $\langle \tilde \beta_{\vfi}(y),\beta\rangle$.
			Let us focus on the numerator:
			\begin{align*}
				\langle \hat m_{\tilde X\vfi(Y)}( y), \beta \rangle = \ff{n} \sum_{i=1}^n  \langle \tilde X_i, \beta \rangle \vfi(Y_i)1_{\{ Y_i\ge y \}} =  \ff{n} \sum_{i=1}^n   \sum_{j=1}^N X_i(t_j) \langle 1_{I_j}, \beta \rangle \vfi(Y_i)1_{\{ Y_i\ge y \}}
			\end{align*}
			with
			\begin{align*}
				\langle  1_{I_j} , \beta \rangle = \int_{I_j} \beta(u)\dd u =: \bar \beta_j,
			\end{align*}
			and thus
			$
			\langle  \hat m_{\tilde X\vfi(Y)}( y) , \beta \rangle = {\mathcal V}_\vfi(y) ^\top \bar\beta.
			$
		\end{Remark}
		It appears that the estimated index $\tilde \beta_{\vfi}(Y_{n-k+1,n})$ is close to the true one
		only for small values of $k$, or equivalently for large values of the threshold $Y_{n-k+1,n}$.
		This phenomenon is coherent with the fact that $\langle \beta, X\rangle$ is designed to capture 
		the tail-information of $Y$. 
		Let us highlight that the range of accurate values for $k$ depends on the difficulty of the estimation problem.
		The higher $\gamma$ and $\kappa$, the heavier the tail of $g(Y)$, and the easier the estimation of $\beta$ is.
		In practice, the selection of $k$ is performed using the above described selection procedure since 
		$\beta$ is unknown and therefore Figure~\ref{fig:beta_estim_exceedance} cannot be used in real data situations.
		
		Finally, the mean over the 500 replications of the curves $t\in[0,1]\mapsto\tilde \beta_{\vfi}(Y_{n-\tilde k+1,n})(t)$ is compared to the true one $t\in[0,1]\mapsto \beta(t)$ on Figure~\ref{fig:beta_estim_plot_conc}
		in the case where $\tau=-2$.
		It appears that the selected values of $k$ yield very accurate estimations on all nine considered situations whatever the values of $\gamma$ or $\kappa$. Pushing further, Figure~\ref{fig:beta_estim_plot_conc1001} is analogous to Figure~\ref{fig:beta_estim_plot_conc} when the underlying grid is of size $N=1001$. This aims at better reflecting the high-dimensional regime, since the sample size is $n=500$. One may observe a slight degradation of the estimation in comparison to Figure~\ref{fig:beta_estim_plot_conc} mostly when $\kappa=1$. It appears again that the higher $\gamma$ (heavier tail), the easier the inference becomes. Similarly, the larger $\kappa$, the more accurate the inference. Indeed, in view of the inverse regression model $X=g(Y)\beta+\eps$ and the composition rule for regularly-varying functions, $\kappa$ also contributes to the leading tail-index.

		\begin{figure}[p]
			\centering
			\begin{minipage}{0.3\textwidth}
				\centering
				\includegraphics[scale=0.225]{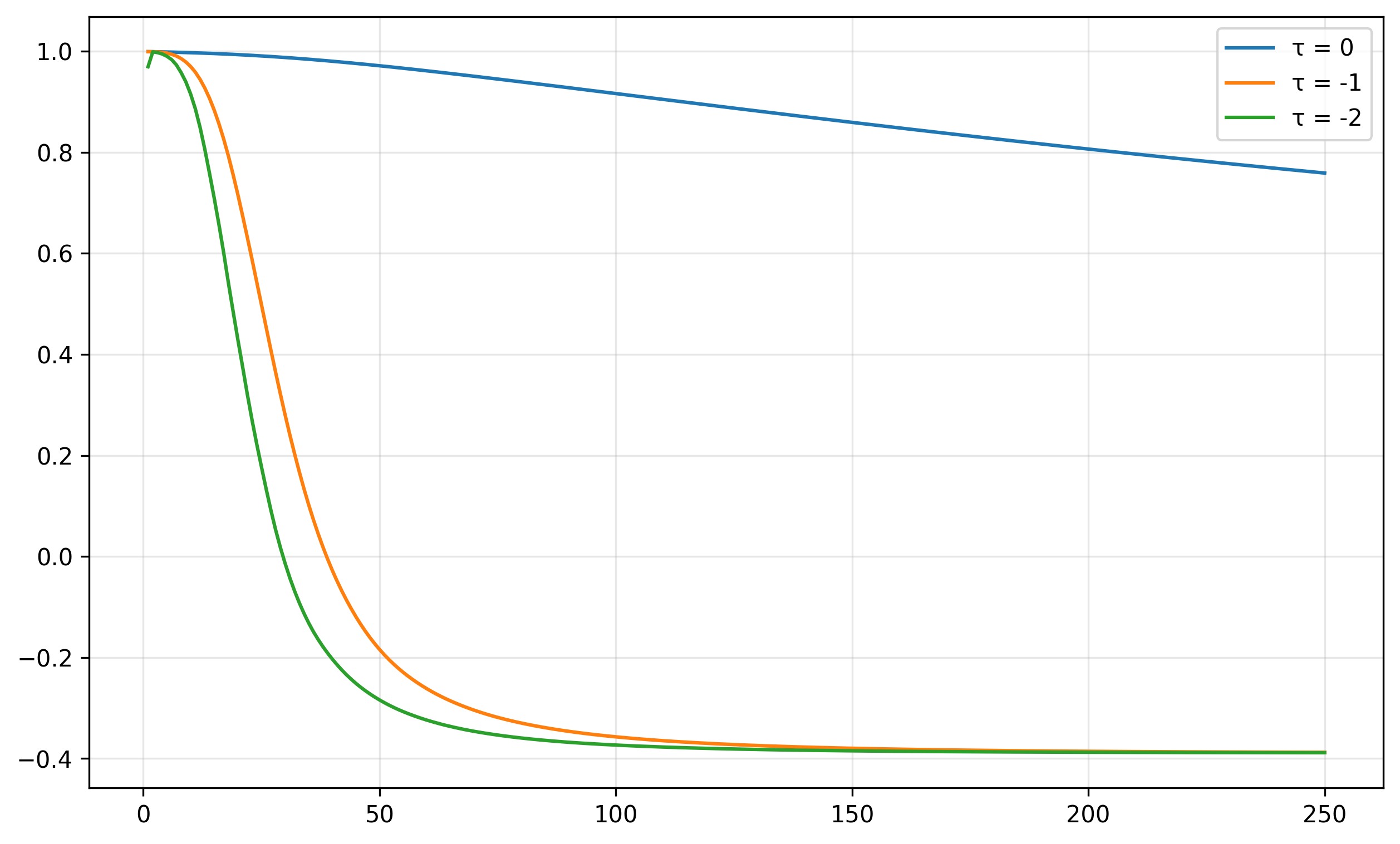}
				\par\medskip
				\centering\textbf{(a)} $\kappa=1$, $\gamma = 1/3$.
			\end{minipage}
			\hfill
			\begin{minipage}{0.3\textwidth}
				\centering
				\includegraphics[scale=0.225]{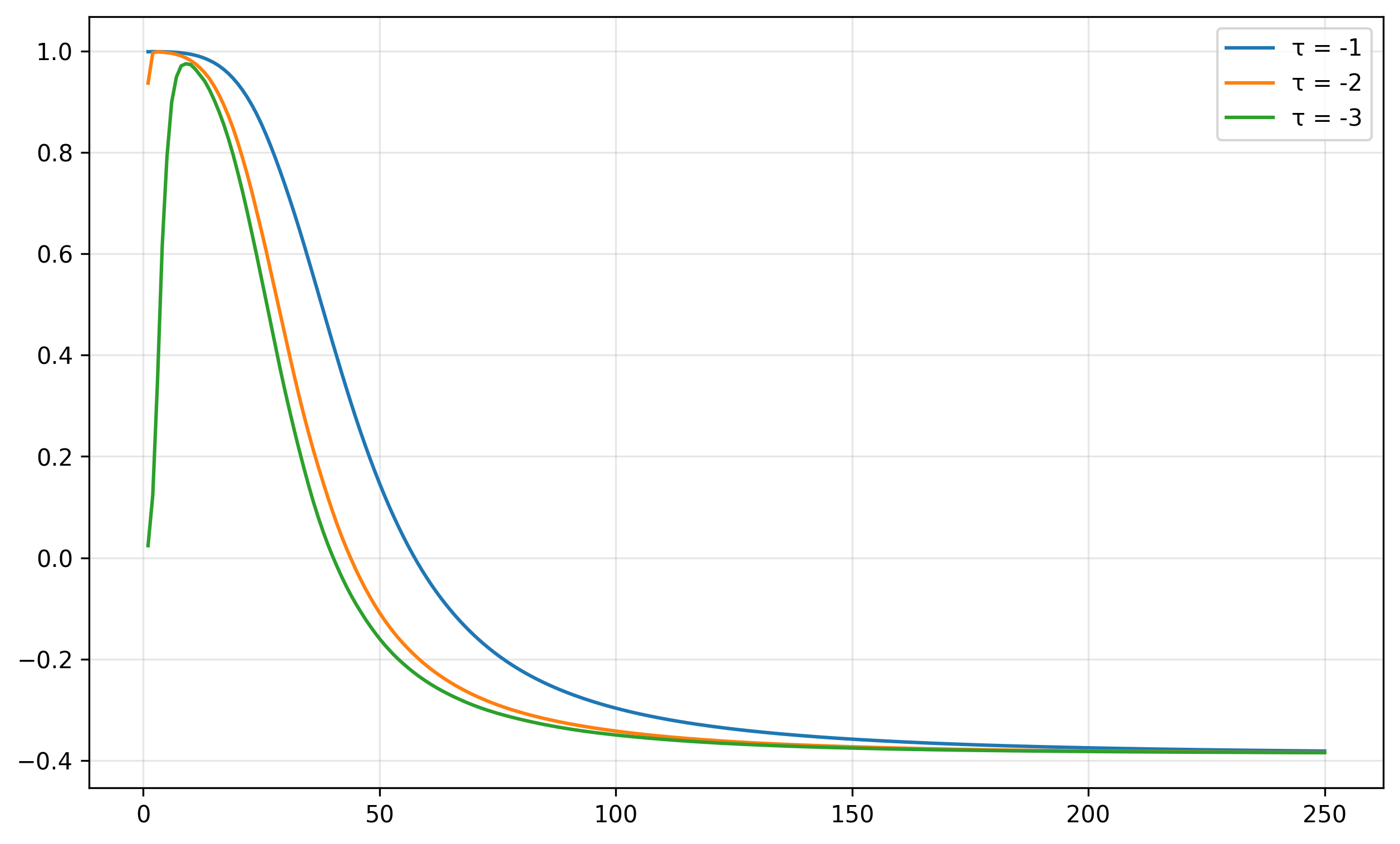}
				\par\medskip
				\centering\textbf{(b)} $\kappa=1$, $\gamma = 1/2$.
			\end{minipage}
			\hfill
			\begin{minipage}{0.3\textwidth}
				\centering
				\includegraphics[scale=0.225]{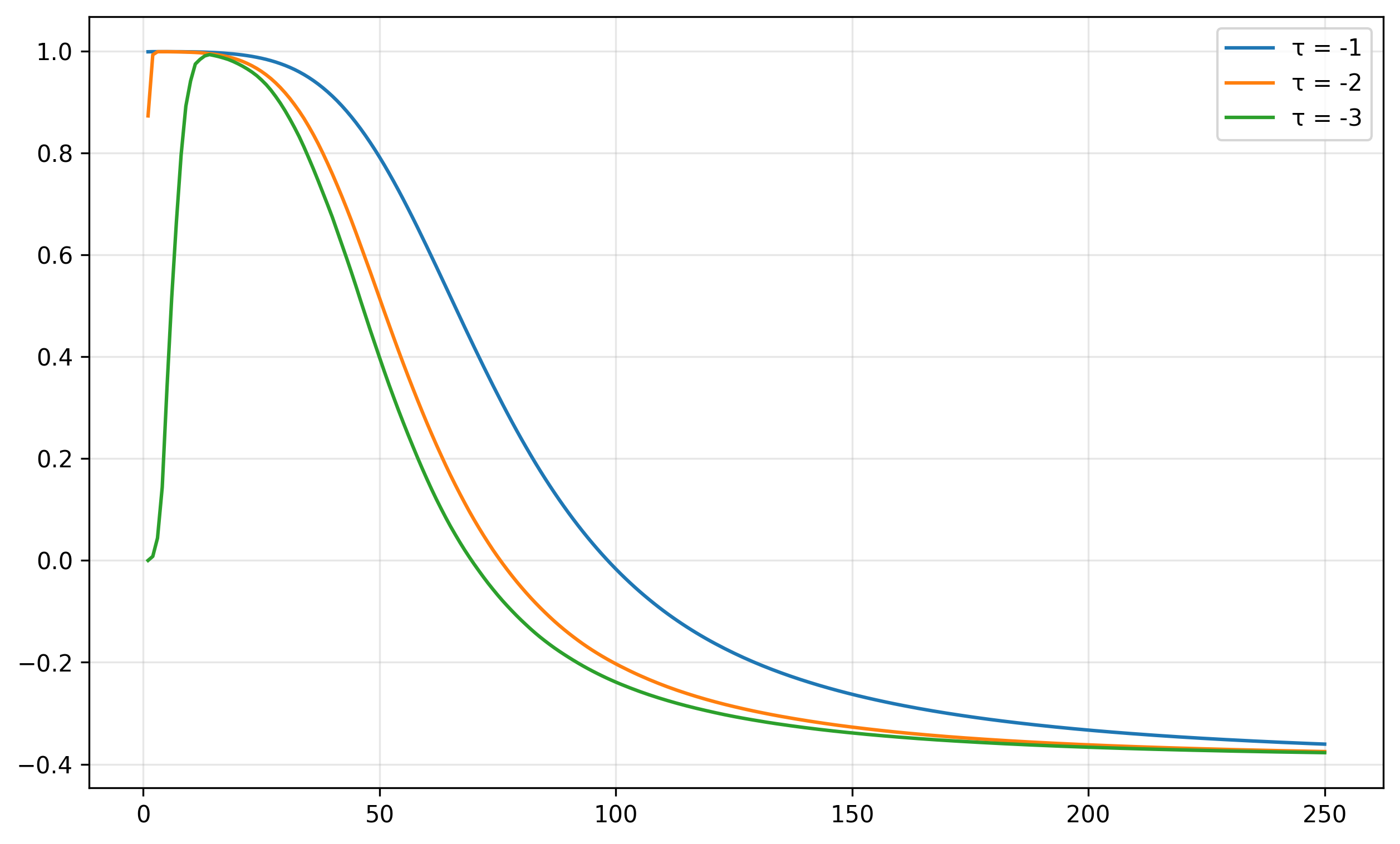}
				\par\medskip
				\centering\textbf{(c)} $\kappa=1$, $\gamma = 9/10$.
			\end{minipage}
			
			\medskip 
			\begin{minipage}{0.3\textwidth}
				\centering
				\includegraphics[scale=0.225]{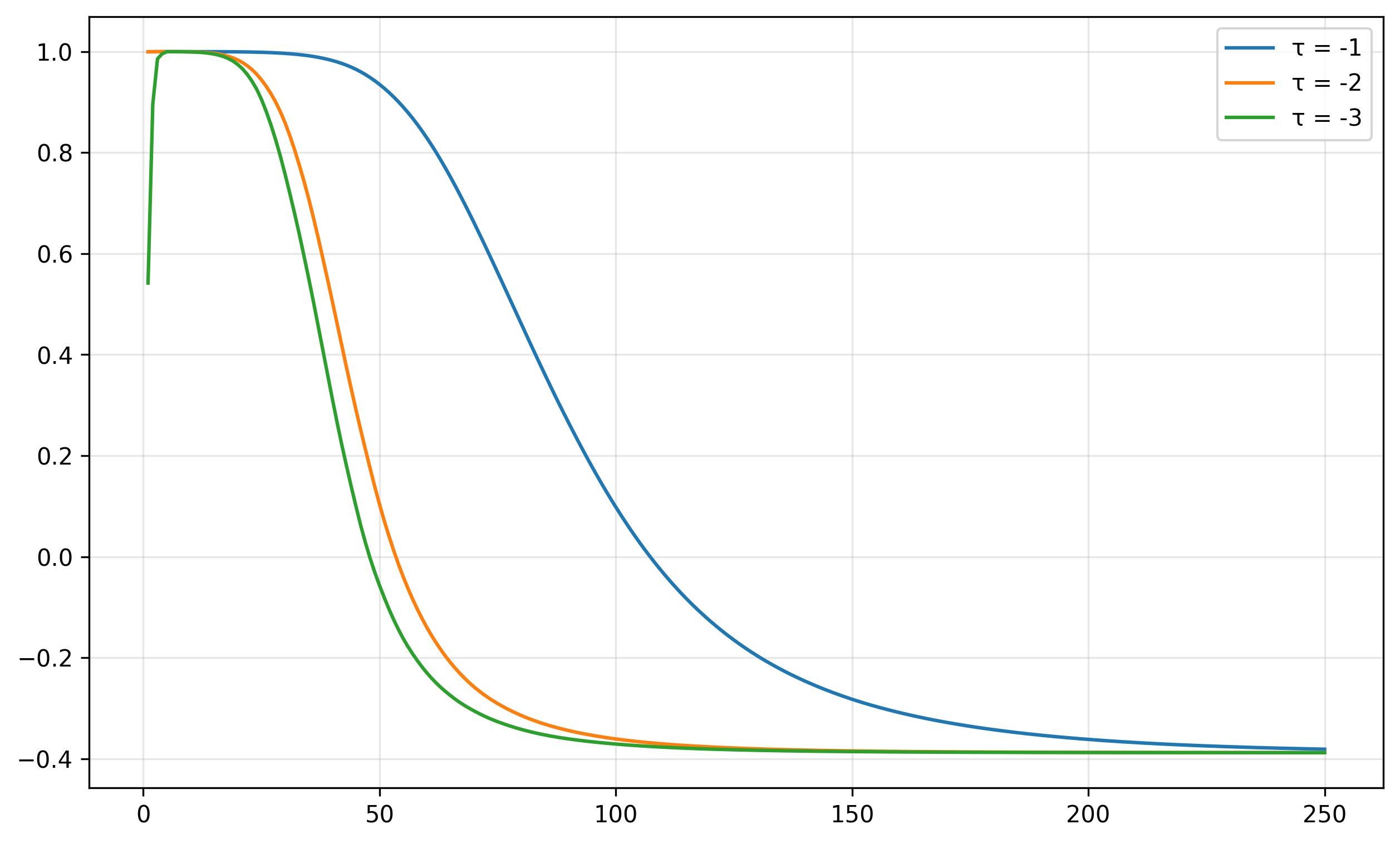}
				\par\medskip
				\centering\textbf{(d)} $\kappa=3/2$, $\gamma = 1/3$.
			\end{minipage}
			\hfill
			\begin{minipage}{0.3\textwidth}
				\centering
				\includegraphics[scale=0.225]{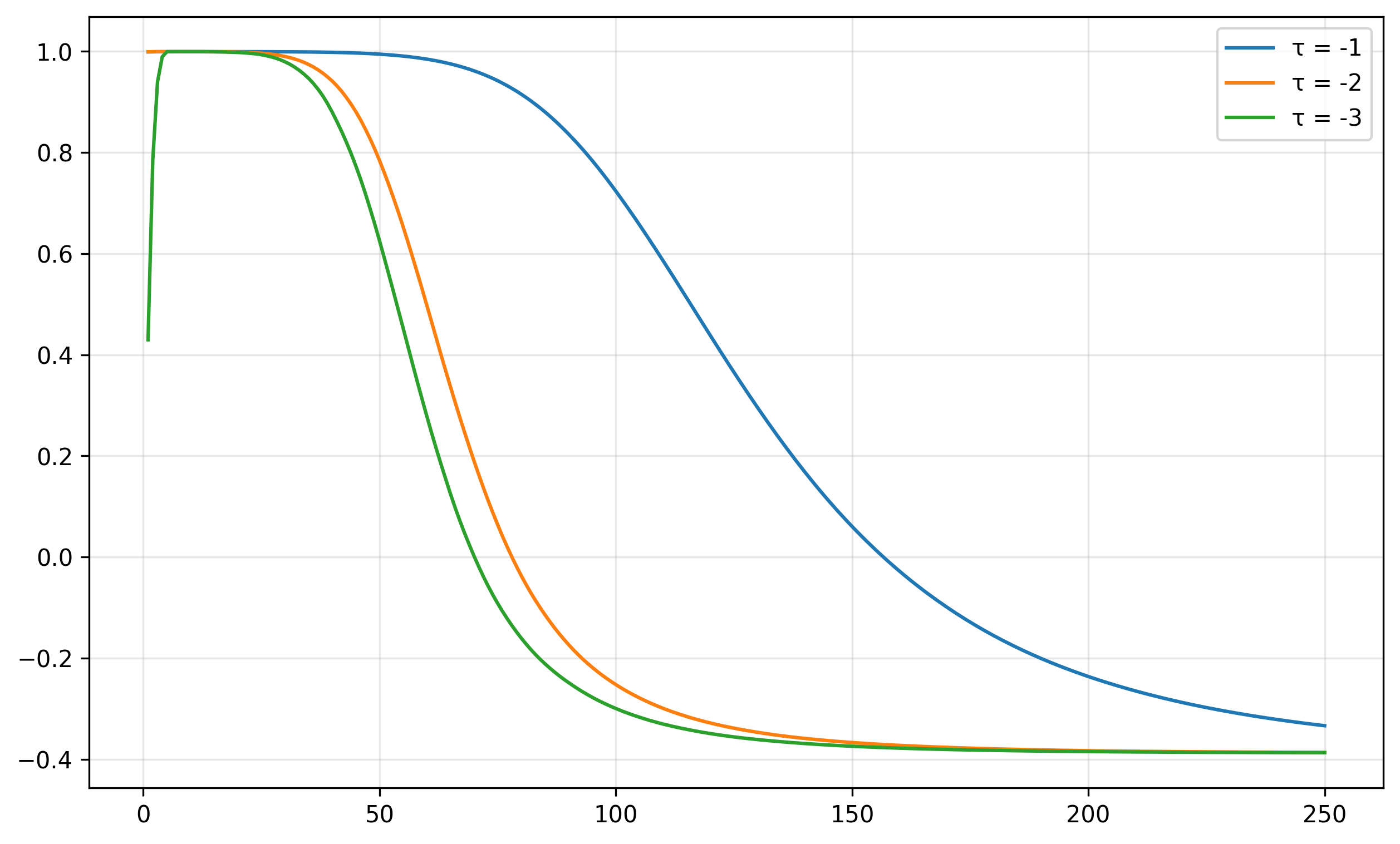}
				\par\medskip
				\centering\textbf{(e)} $\kappa=3/2$, $\gamma = 1/2$.
			\end{minipage}
			\hfill
			\begin{minipage}{0.3\textwidth}
				\centering
				\includegraphics[scale=0.225]{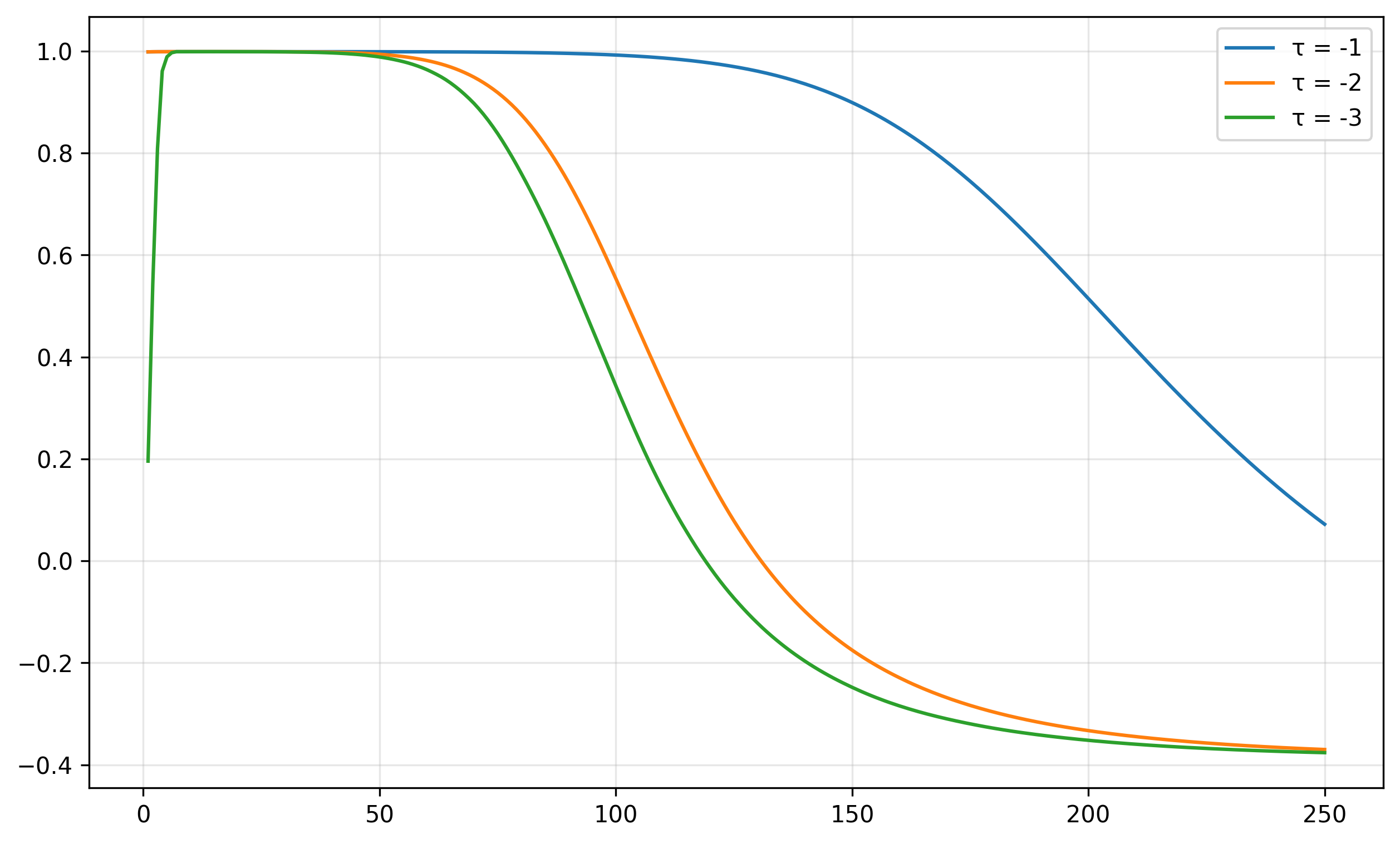}
				\par\medskip
				\centering\textbf{(f)} $\kappa=3/2$, $\gamma = 9/10$.
			\end{minipage}
			\medskip 
			\begin{minipage}{0.3\textwidth}
				\centering
				\includegraphics[scale=0.225]{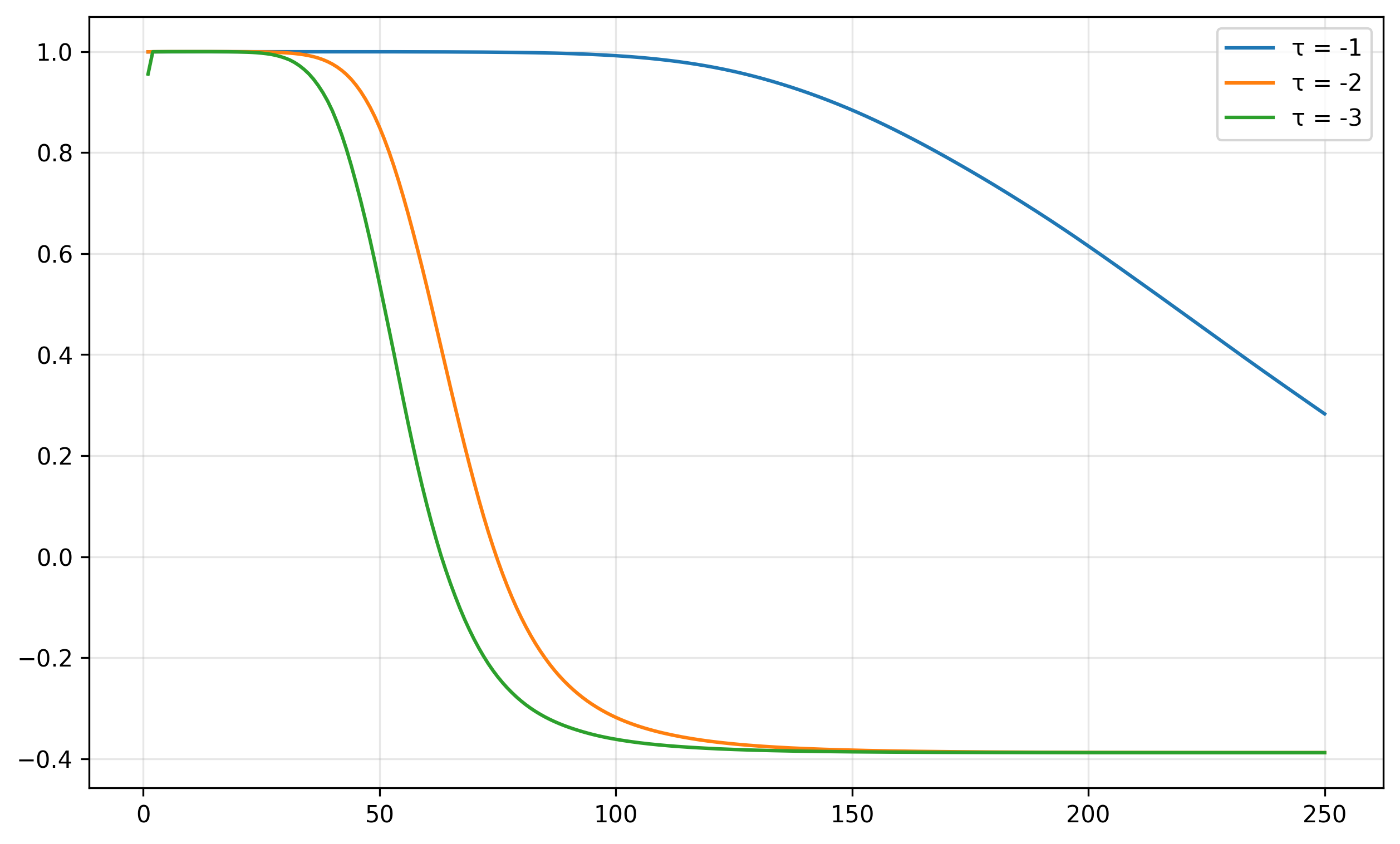}
				\par\medskip
				\centering\textbf{(g)} $\kappa=2$, $\gamma = 1/3$.
			\end{minipage}
			\hfill
			\begin{minipage}{0.3\textwidth}
				\centering
				\includegraphics[scale=0.225]{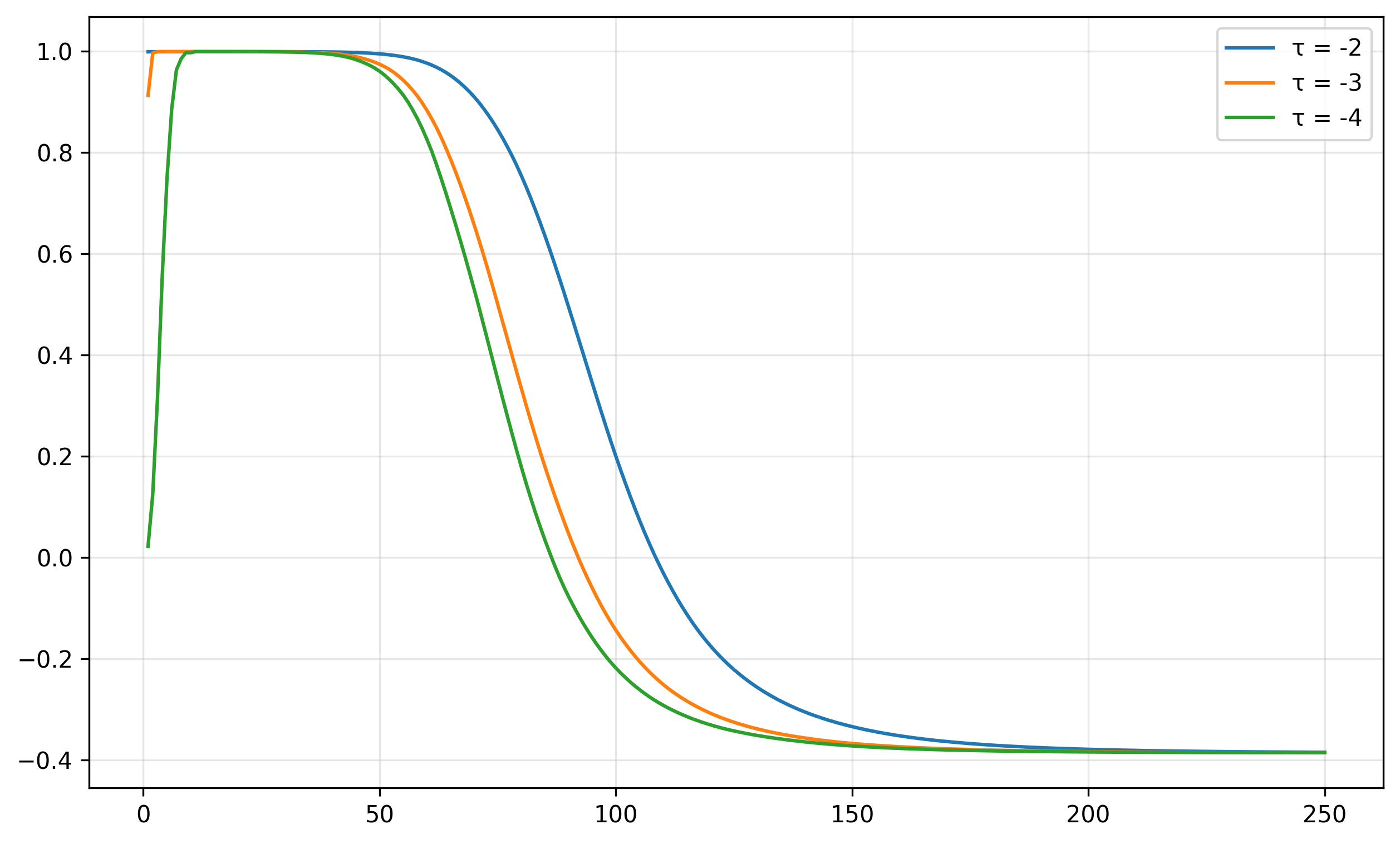}
				\par\medskip
				\centering\textbf{(h)} $\kappa=2$, $\gamma = 1/2$.
			\end{minipage}
			\hfill
			\begin{minipage}{0.3\textwidth}
				\centering
				\includegraphics[scale=0.225]{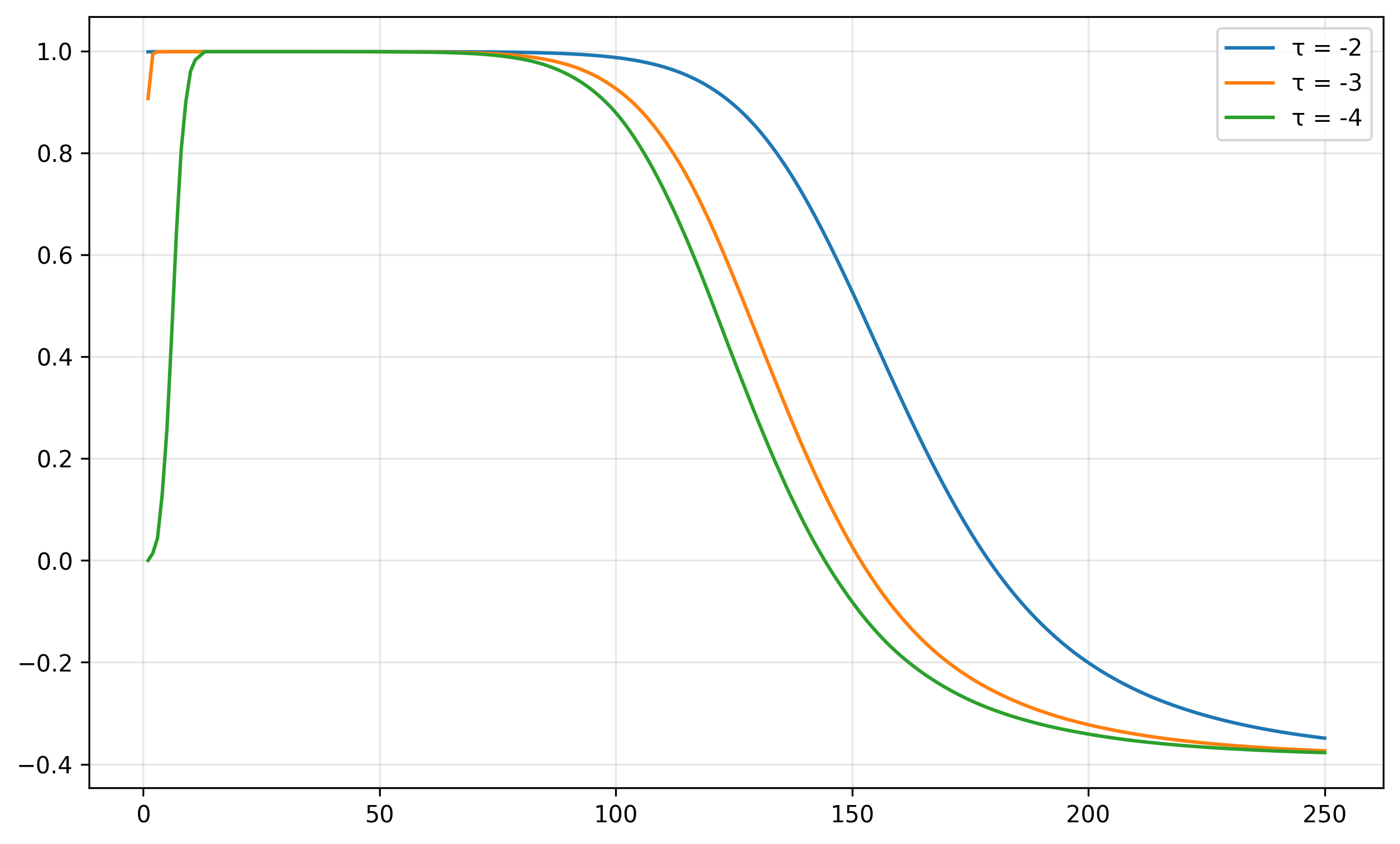}
				\par\medskip
				\centering\textbf{(i)} $\kappa=2$, $\gamma = 9/10$.
			\end{minipage}
			
			\caption{Finite sample behaviour of $\tilde \beta_{\vfi}(Y_{n-k+1,n})$ for different admissible values of $\tau$ (see Table~\ref{tab-tau}) under the simulated inverse model with Burr response and conditional fBm noise. The $x$-axis represents the number $k$ of exceedances and the $y$-axis is  the average value of $\langle \tilde \beta_{\vfi}(Y_{n-k+1,n}),\beta\rangle$ over $500$ Monte Carlo replications. Here, $\rho=-1/2$ and $N=101$.}  
			\label{fig:beta_estim_exceedance}
		\end{figure}

		\begin{figure}[p]
			\centering
			\begin{minipage}{0.3\textwidth}
				\centering
				\includegraphics[scale=0.3]{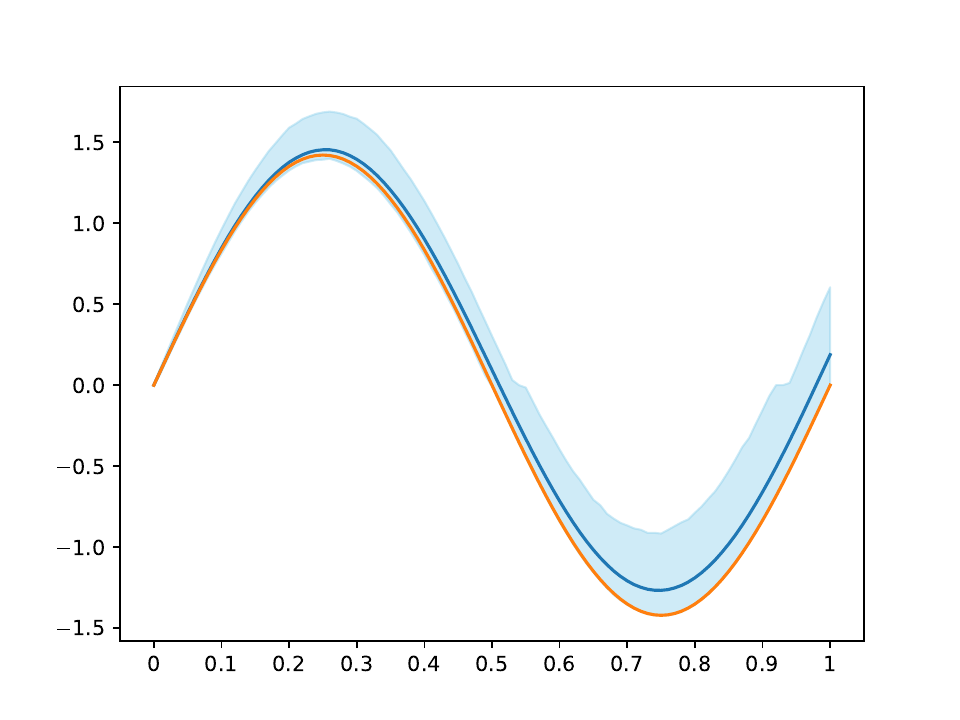}
				\par\medskip
				\centering\textbf{(a)} $\kappa=1$, $\gamma = 1/3$.
			\end{minipage} 
			\hfill
			\begin{minipage}{0.3\textwidth}
				\centering
				\includegraphics[scale=0.3]{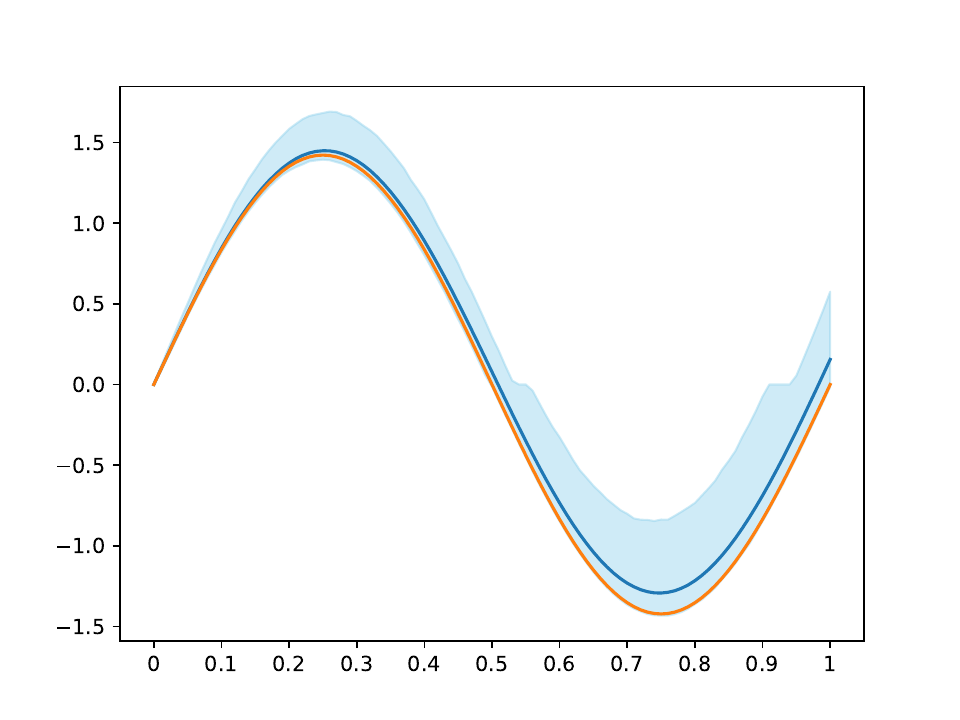}
				\par\medskip
				\centering\textbf{(b)} $\kappa=1$, $\gamma = 1/2$.
			\end{minipage}
			\hfill
			\begin{minipage}{0.3\textwidth}
				\centering
				\includegraphics[scale=0.3]{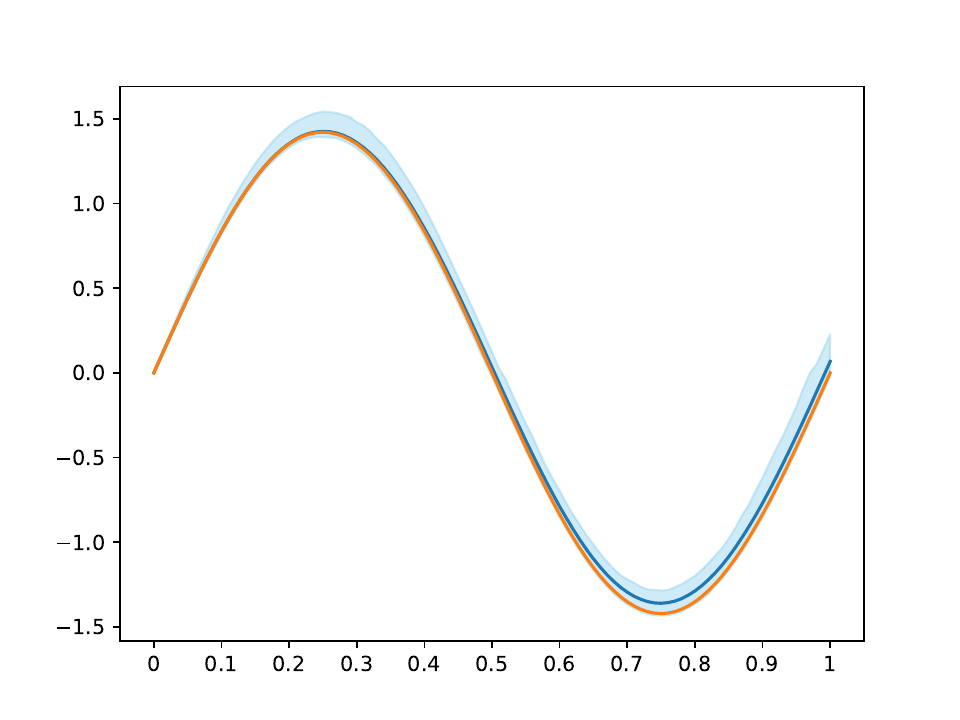}
				\par\medskip
				\centering\textbf{(c)} $\kappa=1$, $\gamma = 9/10$.
			\end{minipage}
			\medskip 
			\begin{minipage}{0.3\textwidth}
				\centering
				\includegraphics[scale=0.3]{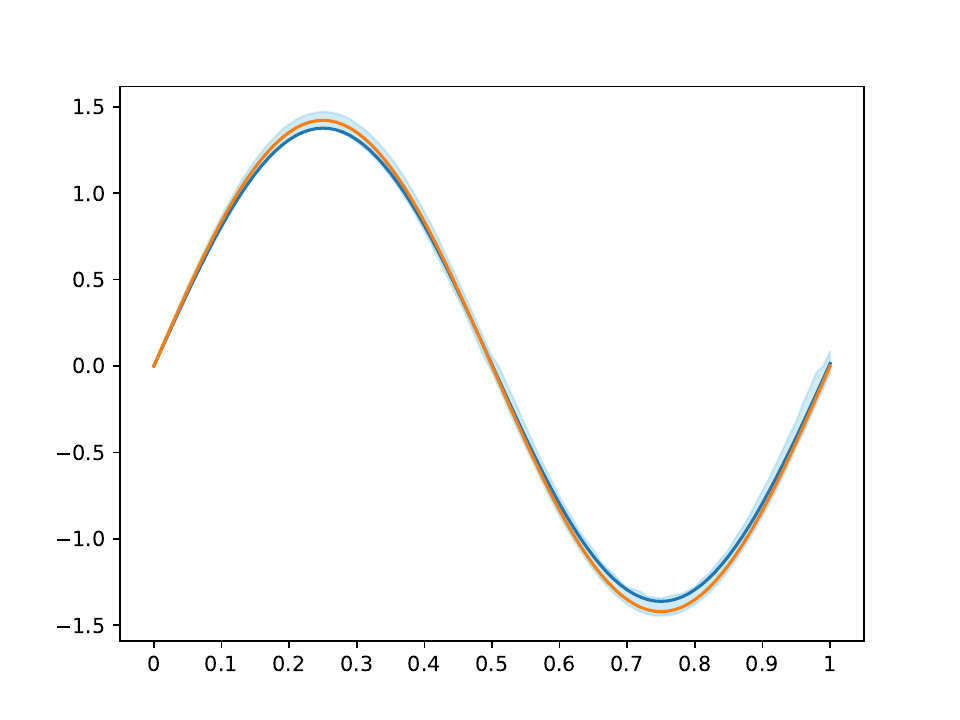}
				\par\medskip
				\centering\textbf{(d)} $\kappa=3/2$, $\gamma = 1/3$.
			\end{minipage}
			\hfill
			\begin{minipage}{0.3\textwidth}
				\centering
				\includegraphics[scale=0.3]{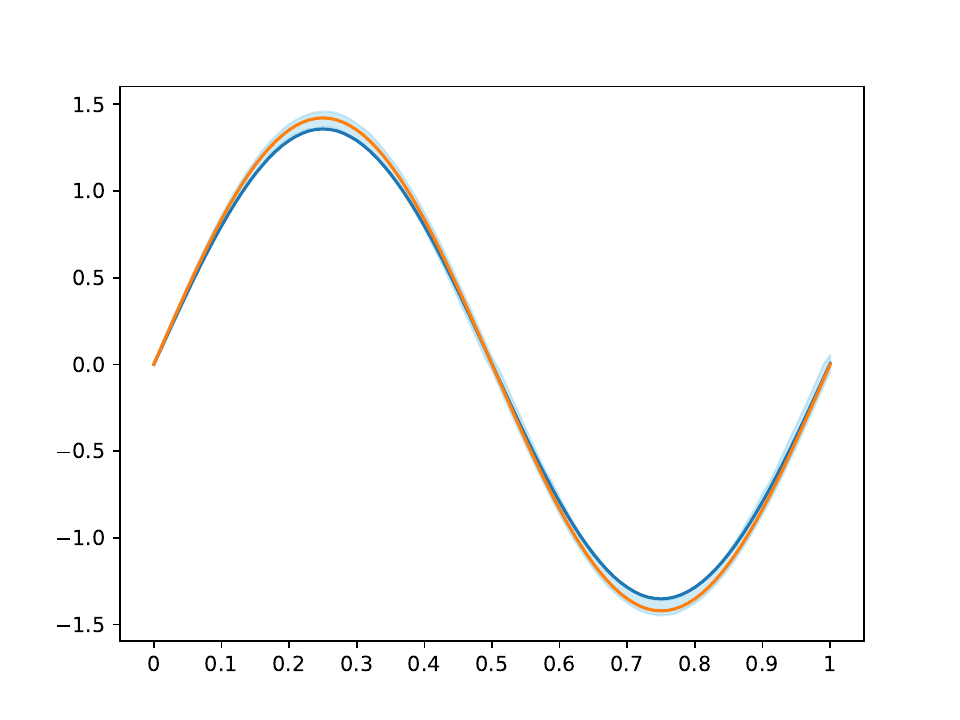}
				\par\medskip
				\centering\textbf{(e)} $\kappa=3/2$, $\gamma = 1/2$.
			\end{minipage}
			\hfill
			\begin{minipage}{0.3\textwidth}
				\centering
				\includegraphics[scale=0.3]{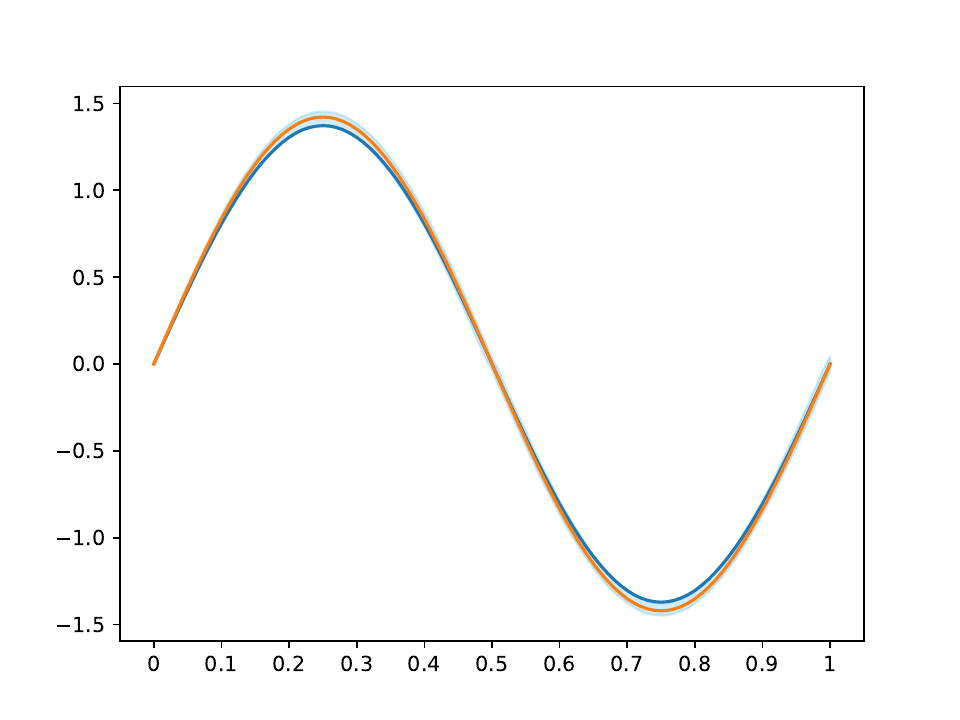}
				\par\medskip
				\centering\textbf{(f)} $\kappa=3/2$, $\gamma = 9/10$.
			\end{minipage}
			\medskip 
			\begin{minipage}{0.3\textwidth}
				\centering
				\includegraphics[scale=0.3]{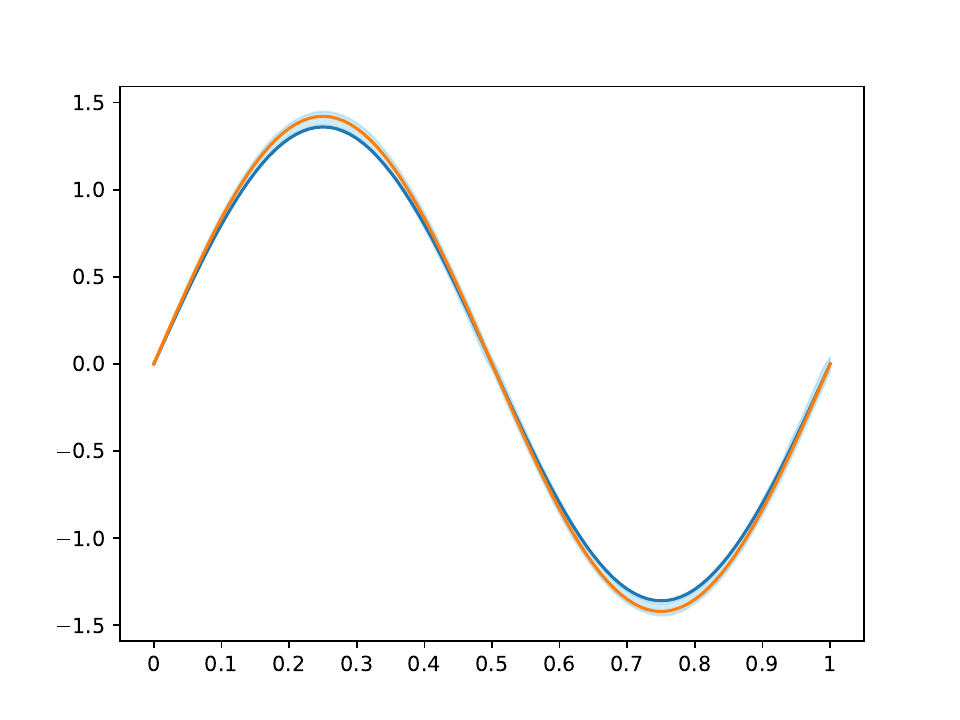}
				\par\medskip
				\centering\textbf{(g)} $\kappa=2$, $\gamma = 1/3$.
			\end{minipage}
			\hfill
			\begin{minipage}{0.3\textwidth}
				\centering
				\includegraphics[scale=0.3]{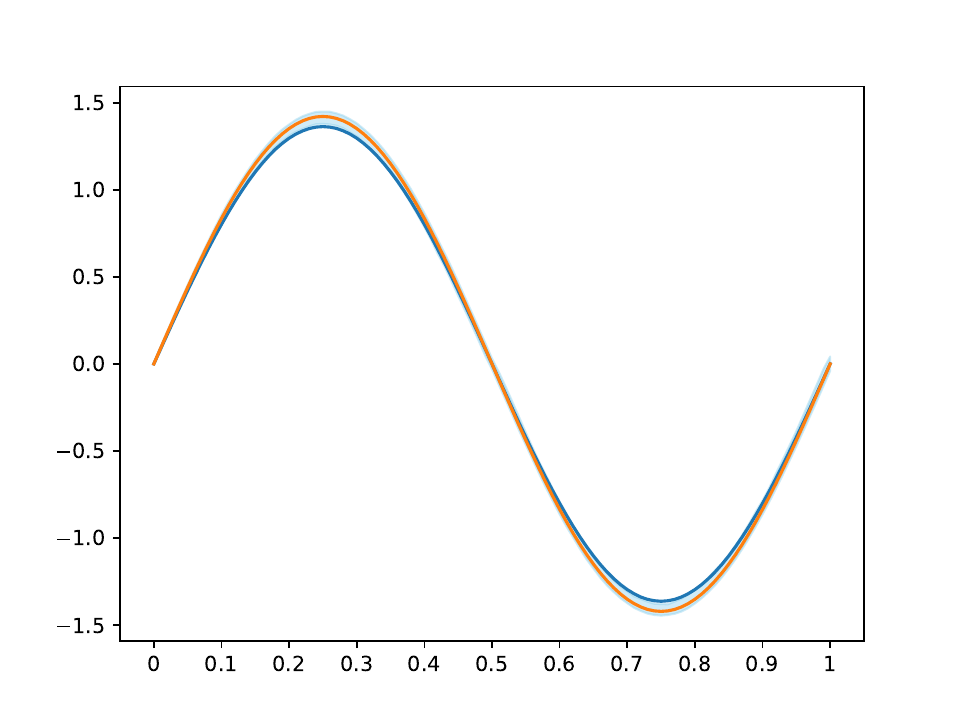}
				\par\medskip
				\centering\textbf{(h)} $\kappa=2$, $\gamma = 1/2$.
			\end{minipage}
			\hfill
			\begin{minipage}{0.3\textwidth}
				\centering
				\includegraphics[scale=0.3]{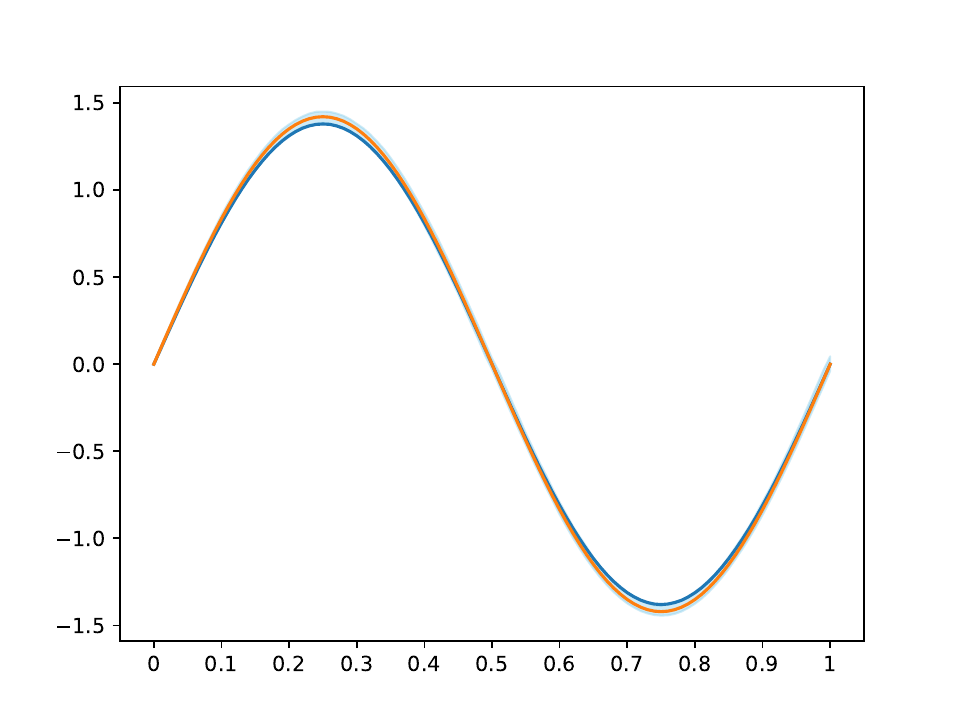}
				\par\medskip
				\centering\textbf{(i)} $\kappa=2$, $\gamma = 9/10$.
			\end{minipage}

			\caption{Simulation results on the inverse model with Burr response and conditional fBm noise. The orange curve is the graph of $t\in[0,1]\mapsto\beta(t)$ while the blue one is the averaged value of $t\in[0,1]\mapsto\tilde \beta_{\vfi}(Y_{n-\tilde k+1,n})(t)$ over $500$ Monte Carlo replications, where $\tilde k=\argmax_{5\le k\le n/5} r(k)$. The light blue area corresponds to the pointwise $5$--$95\%$ inter-quantile envelope of the Monte Carlo replications. Here, $\rho=-1/2$, $\tau=-2$ and $N=101$.}
			\label{fig:beta_estim_plot_conc}
		\end{figure}

		\begin{figure}[p]
			\centering
			\begin{minipage}{0.3\textwidth}
				\centering
				\includegraphics[scale=0.3]{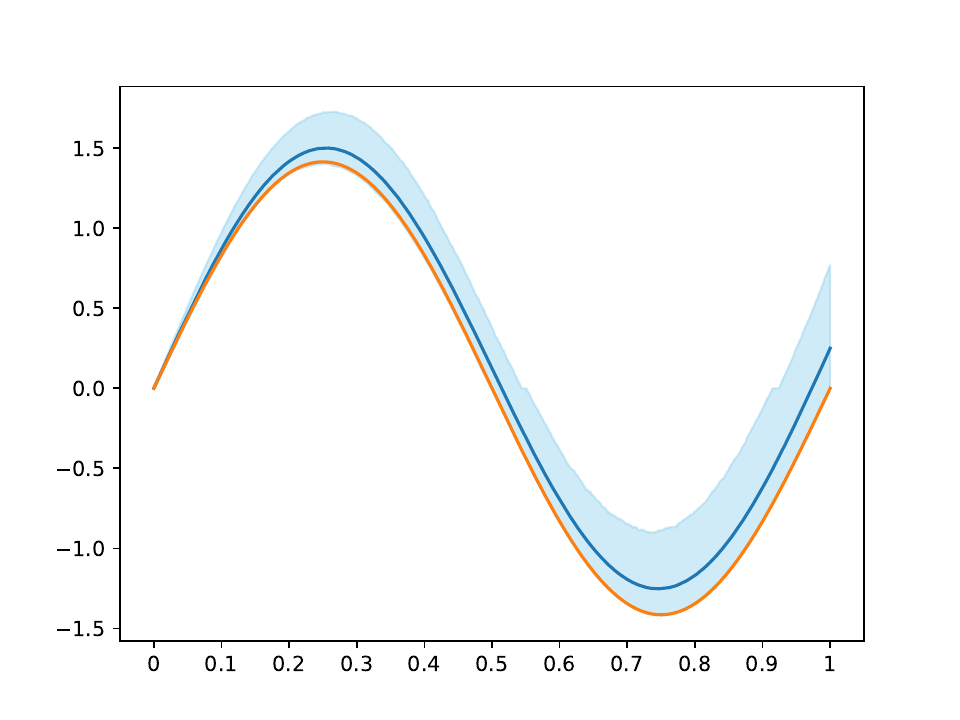}
				\par\medskip
				\centering\textbf{(a)} $\kappa=1$, $\gamma = 1/3$.
			\end{minipage}
			\hfill
			\begin{minipage}{0.3\textwidth}
				\centering
				\includegraphics[scale=0.3]{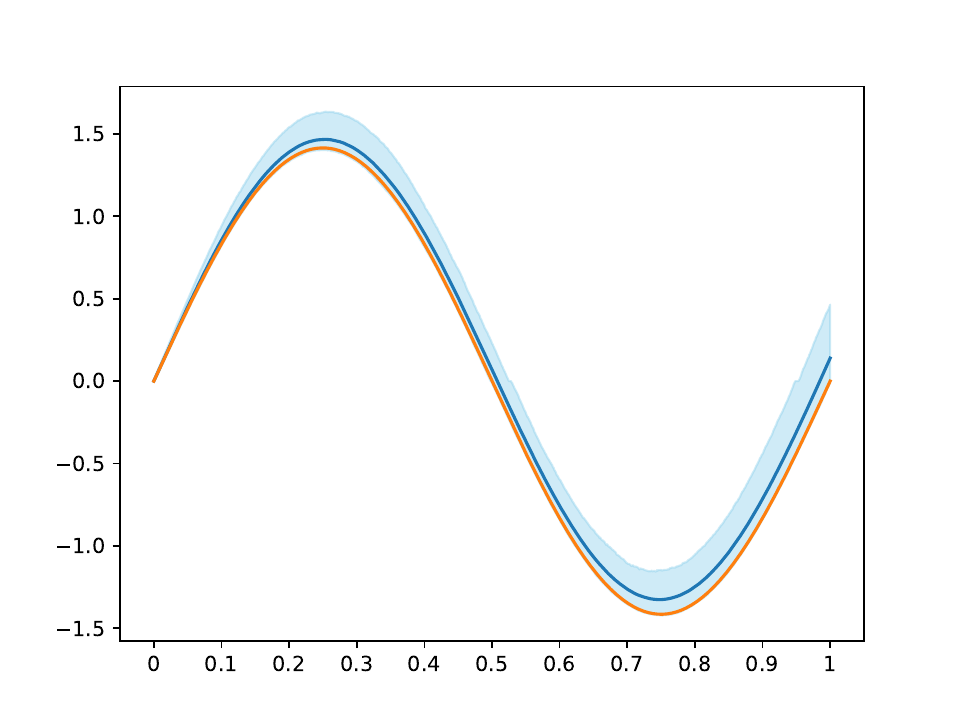}
				\par\medskip
				\centering\textbf{(b)} $\kappa=1$, $\gamma = 1/2$.
			\end{minipage}
			\hfill
			\begin{minipage}{0.3\textwidth}
				\centering
				\includegraphics[scale=0.3]{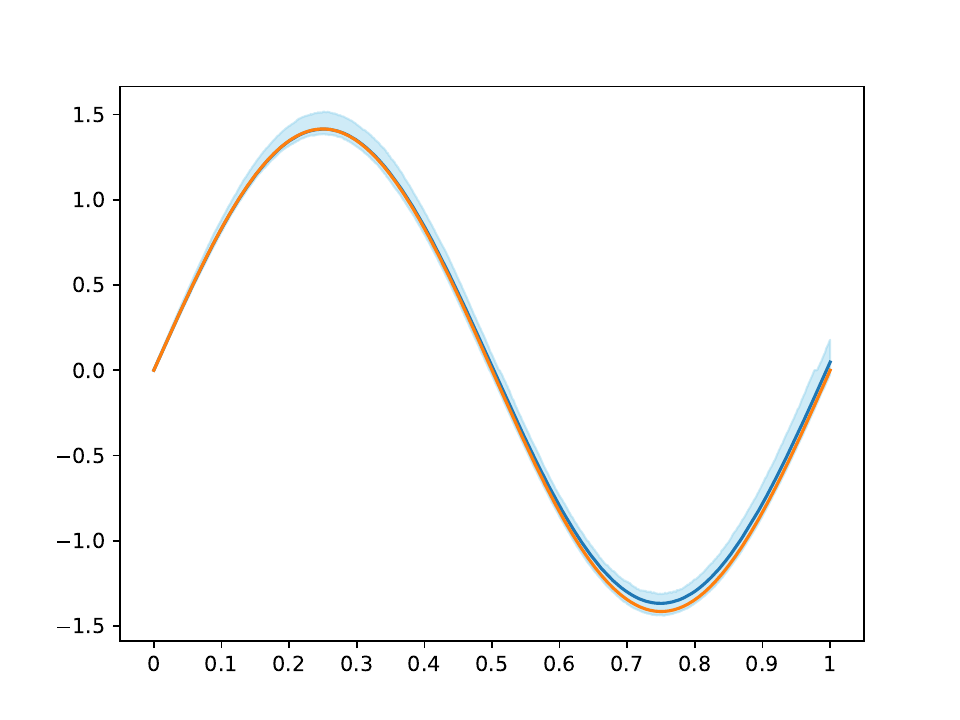}
				\par\medskip
				\centering\textbf{(c)} $\kappa=1$, $\gamma = 9/10$.
			\end{minipage}
			
			\medskip 
			\begin{minipage}{0.3\textwidth}
				\centering
				\includegraphics[scale=0.3]{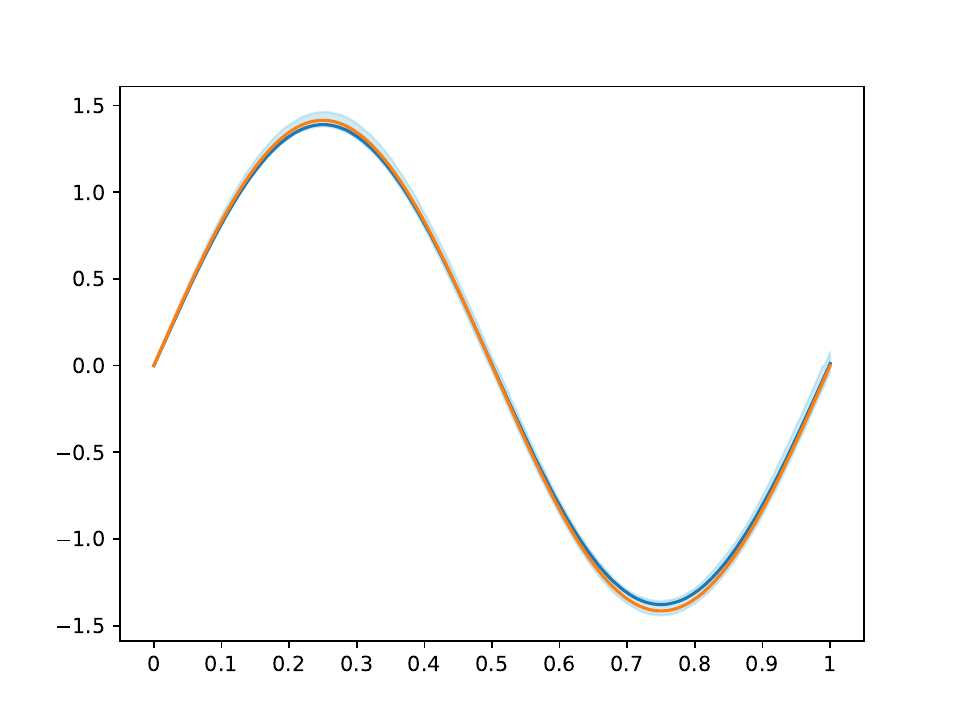}
				\par\medskip
				\centering\textbf{(d)} $\kappa=3/2$, $\gamma = 1/3$.
			\end{minipage}
			\hfill
			\begin{minipage}{0.3\textwidth}
				\centering
				\includegraphics[scale=0.3]{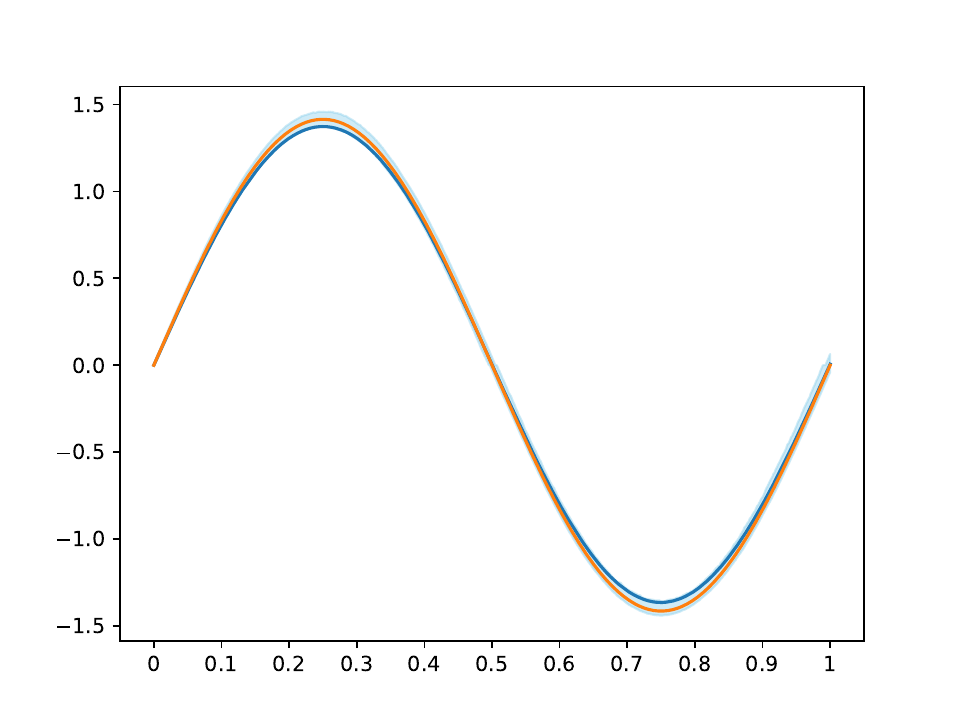}
				\par\medskip
				\centering\textbf{(e)} $\kappa=3/2$, $\gamma = 1/2$.
			\end{minipage}
			\hfill
			\begin{minipage}{0.3\textwidth}
				\centering
				\includegraphics[scale=0.3]{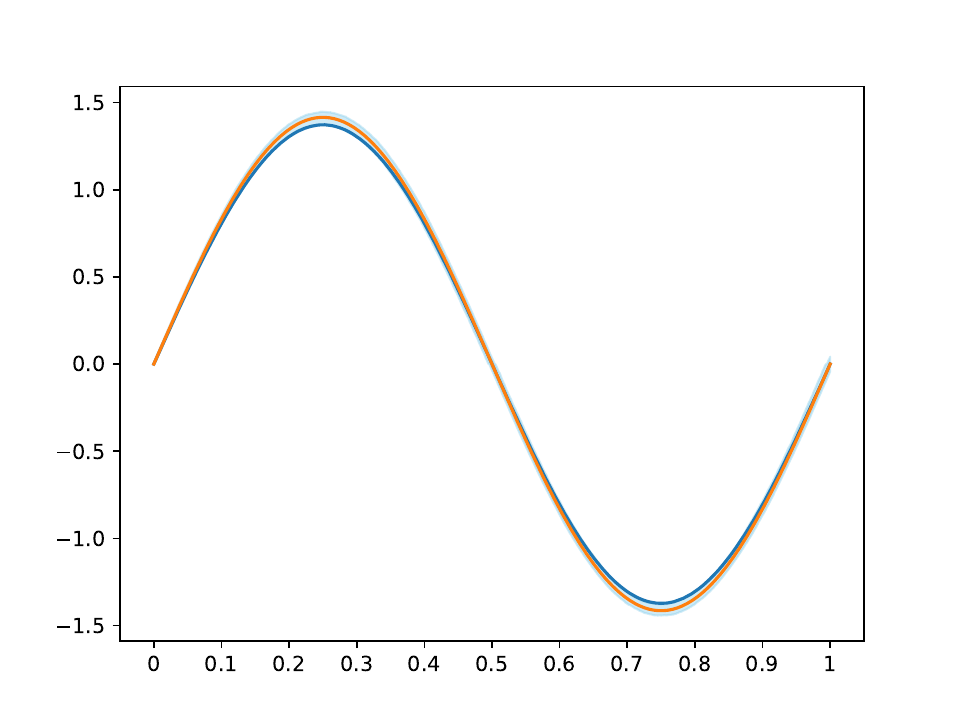}
				\par\medskip
				\centering\textbf{(f)} $\kappa=3/2$, $\gamma = 9/10$.
			\end{minipage}
			
			\medskip 
			\begin{minipage}{0.3\textwidth}
				\centering
				\includegraphics[scale=0.3]{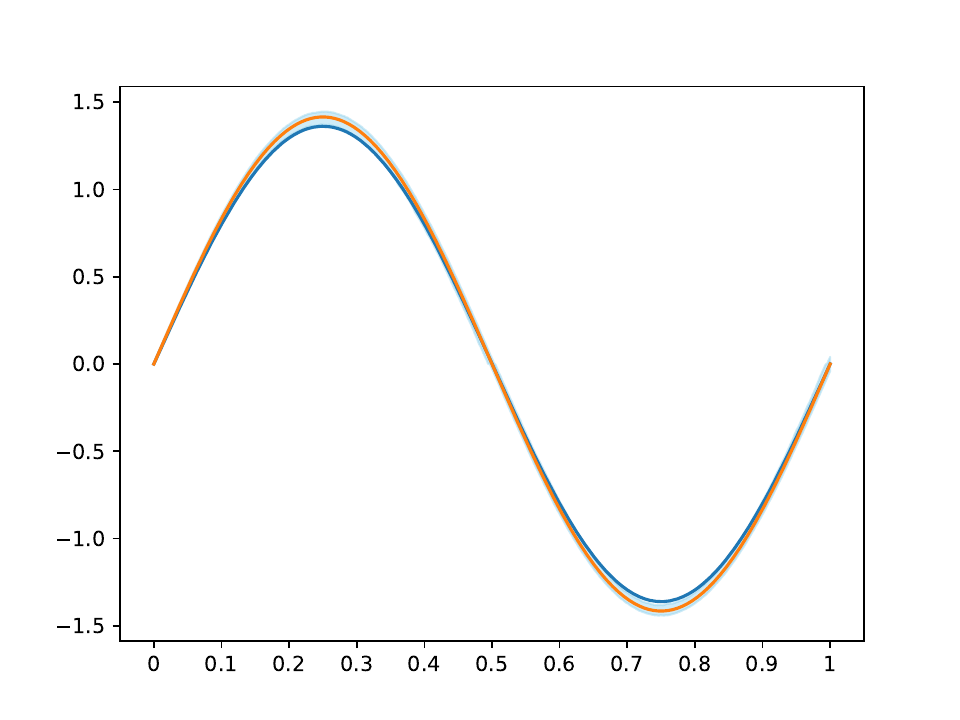}
				\par\medskip
				\centering\textbf{(g)} $\kappa=2$, $\gamma = 1/3$.
			\end{minipage}
			\hfill
			\begin{minipage}{0.3\textwidth}
				\centering
				\includegraphics[scale=0.3]{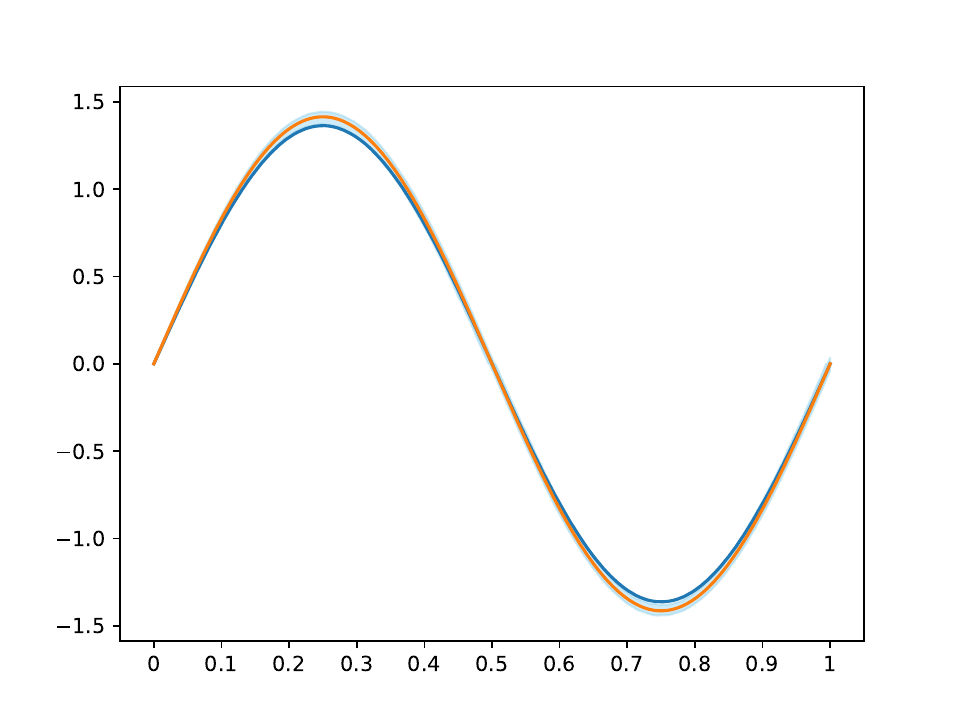}
				\par\medskip
				\centering\textbf{(h)} $\kappa=2$, $\gamma = 1/2$.
			\end{minipage}
			\hfill
			\begin{minipage}{0.3\textwidth}
				\centering
				\includegraphics[scale=0.3]{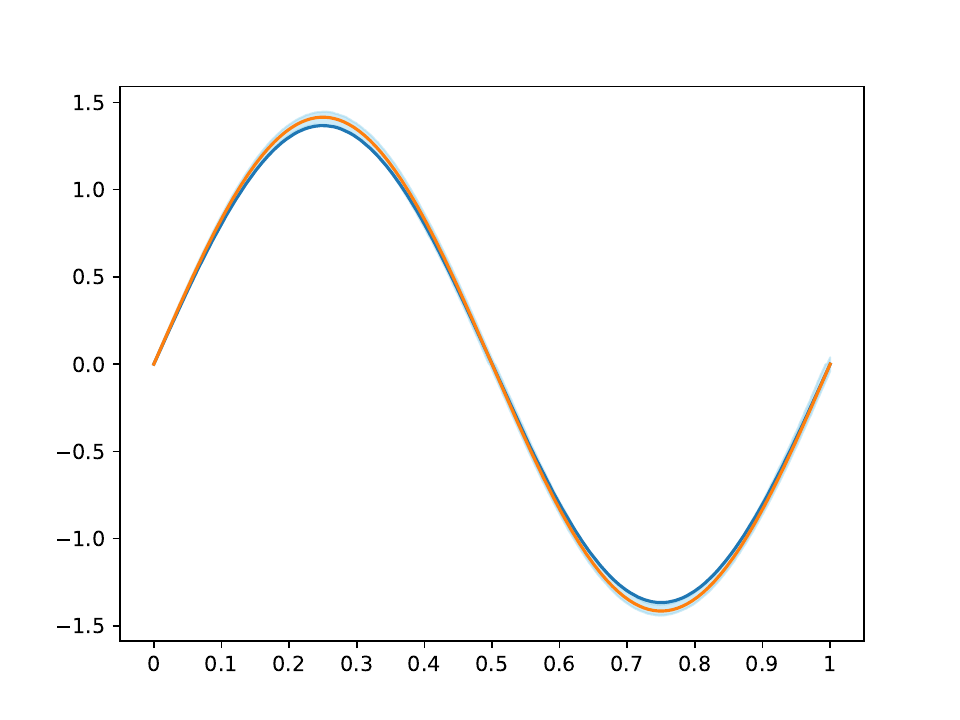}
				\par\medskip
				\centering\textbf{(i)} $\kappa=2$, $\gamma = 9/10$.
			\end{minipage}

			\caption{Simulation results akin to Figure~\ref{fig:beta_estim_plot_conc} for $N=1001$.}
			\label{fig:beta_estim_plot_conc1001}
		\end{figure}

		\section{Application on real data}
		\label{sec-real}

		We illustrate the FEPLS method on a financial application where covariates naturally take a functional form due to high-frequency sampling. Introduce the list of assets\footnote{Available at https://github.com/FutureSharks/financial-data/tree/master/pyfinancialdata/data/} $$\mathcal{A}:= \{  \text{S\&P500}, \text{DAX30}, \text{Nikkei225}, \text{ETH}, \text{BTC} \}$$ where ETH and BTC respectively stand for Ethe\-reum and Bitcoin, everything understood in their canonical currency.  We consider the daily maximum log-return of some asset $a_1 \in \mathcal{A}$ as the response variable $Y$, and the intraday log-return curve of some asset $a_2 \in \mathcal{A}$, possibly the same as $a_1$, as the functional covariate~$X$. Any asset at disposal is recorded at one-minute frequency over the period 2013--2017.
		
		The principal motivation for applying FEPLS in this context is to facilitate the inference of conditional extreme risk measures. By replacing the functional covariate with a univariate projection $Z = \langle \tilde X, \tilde \beta\rangle_\Psi$, the estimation of quantities such as the conditional Value-at-Risk (VaR) or the conditional tail index becomes computationally simpler and statistically more stable in the extreme regime. The core question is then whether the one-dimensional projection retains the covariate information that is relevant for predicting extreme responses. To answer it, we compare the performance of FEPLS against natural competitors on the ability to capture tail dependence, based on the particular pair $(a_1,a_2)=(\text{DAX30}, \text{ETH})$ and then on all the possible asset pairs from $\mathcal{A}$. Ultimately and as a practical tool for extreme risk assessment, we include the estimation of extreme quantiles. 
		
		Concerning the tail tuning parameter of $\vfi(y)=y^\tau$, we arbitrarily set throughout this section $\tau = -1$. 
		
		\subsection{Sample Construction}
		\label{subsec:sample-construction}
		
		The raw price series of both $a_1, a_2$ are transformed into log-returns $
		R_t = \log(P_t/P_{t-1})$ where $P_t$ denotes the price at time $t$. The two resulting series are aligned by date so that only contemporaneous observations are kept. This yields approximately the same number of one-minute observations for both indices over the common period Jan 1, 2013 - Dec 31, 2017. The discretized functional covariate sample $\{\mathcal X_i\}_{1\le i\le n}$ is constructed by cutting the $a_1$ log-return series into consecutive blocks of $N=1440$ minutes (24 hours). Each block corresponds to one trading day, and the vector $\mathcal X_i = \big(X_i(t_1),\dots,X_i(t_N)\big)^\top$ contains the intraday log-returns of $a_1$ on day $2i-1$. The response sample $\{Y_i\}_{1\le i\le n}$ is obtained from the $a_2$ log-returns: for each even day $2i$, we take the maximum of the absolute log-returns over that day,
		\[
		Y_i = \max_{t\in\text{day }2i} |R_t^{a_2}|.
		\]
		No missing data were detected and the final sample size $n$ is among $\{ 265, 365, 433, 442\}$. It may even be equal to $901$ when $a_1=a_2=\text{BTC}$. As for our particular study case, the pair DAX30-ETH consists of $n=265$ observations.
		
		\begin{Remark}[Temporal dependence]
			Although the theoretical guarantees of Section~\ref{sec-theo} assume i.i.d. observations, financial returns typically exhibit serial dependence and volatility clustering. A common remedy is to thin the data by keeping only observations spaced by several days, see \citet[Section 4]{all_stars2015}. Here, it means that for some fixed day-gap $m\ge 3$, the value $Y_i$ corresponds to the $m i$-th day and the discretized curve $\tilde X_i$ to the day $m i-1$.  
		\end{Remark}
		
		\subsection{Response Tail Behaviour} 
		Before applying FEPLS, we need to verify that the response variable $Y$ exhibits a heavy tail. The evidence for the pair DAX30-ETH is given by the Hill plot in Figure~\ref{fig:hill}, a standard diagnostic tool from extreme-value theory which represents the graph of the Hill estimator
		\begin{align*}
			\hat\gamma(k)&=\ff{ k}\sum_{i=1}^{ k} (\log Y_{n-i+1,n} -\log Y_{n- k,n})
		\end{align*}
		as a function of $k\in\{15,\dots,65\}$, see \citet{Hill}. As such, Figure~\ref{fig:hill} exhibits a stable behaviour over a range of intermediate values of $k$, with an estimated tail index around $\hat \gamma \approx 1/2$.

		\subsection{Threshold Selection}
		Applying the selection rule of Section \ref{sub-seuil}, we plot the logarithm of the empirical absolute covariance between $Y$ and $\langle \tilde \beta_{\vfi}, \tilde X\rangle$ in Figure~\ref{fig:threshold_selection}, namely $k\mapsto \log|r(k)|$ for $k\in\{15,\dots,65\}$. Selecting the argument of $|r(k)|$ that gives its maximum value hence leads to $\tilde k=6$, which means that the extreme regime approximately consists of the top $2.26 \%$ of the $265$ response values. This is used as the effective sample size for all subsequent extreme‑tail analyses about the pair DAX30-ETH. We should mention that different $\tau$ would lead to very distinct outcomes for $\tilde k$, \emph{e.g.}, for $\tau =1$, one has $\tilde k = 17$.

		\subsection{Tail Dependence Competition}
		\label{sub-extr-compet}
		To assess the effectiveness of our method, we evaluate its performance in a tail covariance competition against several projection strategies on the present high-dimensional discretized functional data, \emph{i.e.}, for the pair DAX30-ETH and then on all possible pairs. A variety of baseline competitor strategies is considered: random projections, along with standard dimension reduction methods and their extreme variants (with e-prefix) computed only on the tail region of the data sample. More precisely, fix a threshold $y>0$ and define the exceedances index set as
		\[
		\mathcal{I}_y \; :=\; \{\, i\in\{1,\dots,n\} : Y_i>y \,\},\qquad n_y:=|\mathcal{I}_{y}|.
		\]
		Let $\tilde{X}_y\in\mathbb{R}^{n_y\times N}$ denote the submatrix with rows $\{\tilde X_i^\top : i\in\mathcal{I}_y\}$ and $Y_y\in\mathbb{R}^{n_y}$ the corresponding tail responses. Let $\mathcal{M}:=\{ \text{FEPLS}, \text{eFPCA}, \text{eFSIR}, \text{FPCA}, \text{FSIR}\}$, then each method $m\in \mathcal{M}$ produces a projection score
		\[
		Z_{m,y} \; :=\; \tilde{X}_y\,\Psi\,\beta_m(y) \in \mathbb{R}^{n_y},
		\]
		where the method-specific direction $\beta_m(y)=(\beta_m(t_1),\dots,\beta_m(t_N))^\top\in\mathbb{R}^N$ (or $\beta_m$ if the method does not depend on $y$) is defined below,  and $\Psi\in\mathbb{R}^{N\times N}$ is as in Section~\ref{sec-sim}.
		
		For each $m\in \mathcal{M}$, we compute the unbiased empirical covariance between the method's score and the tail responses, as the metric performance for our analysis,
		\begin{align*}
			\rho_m(y):= &\frac{1}{n_y-1}\sum_{i\in\mathcal{I}_y}\left((Z_{m,y})_i-\bar Z_{m,y}\right)\left(Y_i-\bar Y_y\right),
		\end{align*}
		with $\bar Z_{m,y}=n_y^{-1}\sum_{i\in\mathcal{I}_y} (Z_{m,y})_i$ and $\bar Y_{y}=n_y^{-1}\sum_{i\in\mathcal{I}_y} Y_i$. The standard full-sample versions are recovered as special cases when $\mathcal{I}_y = \{1,\dots,n\}$. All competitors are implemented in their discretized form with the $\Psi$-inner product rather than the standard $L^2$ inner product. 
		
		\medskip
		\emph{(i) Extreme Functional PCA (eFPCA).}
		Let $\bar{\tilde{X}}_y := n_y^{-1}\sum_{i\in\mathcal{I}_y} \tilde X_i$ be the empirical mean over the tail. Define the empirical covariance matrix of the tail covariates:
		\[
		\widehat\Sigma_y := \frac{1}{n_y}\sum_{i\in\mathcal{I}_y} (\tilde X_i-\bar{\tilde{X}}_y)(\tilde X_i-\bar{\tilde{X}}_y)^\top \in \mathbb{R}^{N\times N}.
		\]
		The eFPCA direction $\beta_{\mathrm{eFPCA}}(y)$ is the eigenvector corresponding to the largest eigenvalue of $\widehat\Sigma_y$:
		\[
		\beta_{\mathrm{eFPCA}}(y) := \argmax_{\beta \in \mathbb{R}^N, \|\beta\|_{\Psi}=1} \beta^\top \widehat\Sigma_y \beta,
		\]
		where $\|\beta\|_{\Psi}^2 := \beta^\top \Psi \beta$.

		\medskip
		\emph{(ii) Extreme Functional SIR (eFSIR).}
		Partition $\mathcal{I}_y$ into $5$ equal-frequency slices $\mathcal{S}_1,\dots,\mathcal{S}_5$ according to the ordered values of $\{Y_i: i\in\mathcal{I}_y\}$. For each slice $1\le h \le 5$, define the slice mean $m_h := \frac{1}{|\mathcal{S}_h|}\sum_{i \in \mathcal{S}_h} \tilde X_i$. The between‑slice covariance matrix is
		\[
		\widehat\Sigma_{B,y} := \sum_{h=1}^{5} \frac{|\mathcal{S}_h|}{n_y}\,(m_h - \bar{\tilde{X}}_y)(m_h - \bar{\tilde{X}}_y)^\top.
		\]
		The eFSIR direction $\beta_{\mathrm{eFSIR}}(y)$ is the solution of the generalized eigenvalue problem
		\[
		\widehat\Sigma_{B,y} \beta = \lambda \, \widehat\Sigma_y \beta,
		\]
		taken for the eigenvalue $\lambda$ with largest absolute value. In practice we solve the regularized problem $(\widehat\Sigma_y + \eta I_N)\beta = \lambda^{-1}\widehat\Sigma_{B,y}\beta$ with $\eta = 10^{-6}\,\mathrm{tr}(\widehat\Sigma_y)/N$ and $I_N$ being the identity matrix of size $N$. The full-sample FSIR direction $\beta_{\mathrm{FSIR}}$, obtained by replacing $\mathcal{I}_y$ with $\{1,\dots,n\}$, uses $10$ slices.
		
		\medskip
		\emph{(iii) Random projections.}
		As a naive baseline, we generate $5000$ independent random directions $({\beta^{(r)}}/{\|\beta^{(r)}\|_{\Psi}})_{1\le r\le 5000}$ by drawing $\beta^{(r)} \sim \mathcal{N}(0, I_N)$.
		
		\subsection{Competition results}
		\label{sec:significance-testing}
		
		In Figure~\ref{fig:performance_comparison}, we plot the graph of $k\mapsto |\rho_m(Y_{n-k+1,n})|$ for all $m\in \mathcal{M}$ and the fixed pair $(a_1,a_2)=(\text{DAX30}, \text{ETH})$. One may see that FEPLS dominates the competitors across almost all thresholds $k$. The gray shaded region represents the $5$--$95\%$ inter-quantile range of covariances obtained from $5000$ random directions. It highlights the fact that the observed performance of FEPLS is far above what would be expected by chance. Lastly, one may also notice that the extreme versions of the competitor strategies consistently outperform their full-sample counterpart. 
		
		Extending Figure \ref{fig:performance_comparison} across dataset-threshold combinations, Figure \ref{fig:competition} reports the evolution, as a function of the threshold level $k$, of the mean rank of the different dimension-reduction methods in terms of absolute tail covariance with the response, based upon the $25$ possible pairs and all threshold levels $k$. For each value of $k$, methods are ranked from best (rank~1) to worst, and the displayed curves correspond to ranks averaged over the 25 asset pairs; hence, lower values indicate better overall performance.  FEPLS consistently attains the lowest mean rank across the entire range of thresholds. Moreover, classical full-sample methods systematically exhibit higher mean ranks than their extreme counterparts, especially for smaller values of $k$.

		Finally and as a complement to the previous plot, Figure \ref{fig:win_percentage} displays the win rate of FEPLS, \emph{i.e.}, the proportion of pairs on which it outperforms all competitors, across different threshold levels $k$. To formally support the claim that the superior performance of FEPLS is statistically significant and not attributable to random variations, we employ a one-sided binomial test under the null hypothesis of equal performance across all five methods. Under the null hypothesis, each method has equal $20\%$ of winning probability. With $25$ (supposedly independent) datasets, the number of wins $W$ for FEPLS follows $W \sim \text{Binomial}(25, 0.20)$. The critical value for $5\%$-significance corresponds to $9$ wins over those $25$ trials, meaning a win rate of 36\%, since $\mathbb{P}(W \geq 9) = 0.0468$, as visualized by the horizontal green line. The plot shows that FEPLS achieves a win rate consistently above 36\% across all dataset-threshold combinations, providing evidence against the null hypothesis.

		\subsection{Conditional extreme quantile inference}
		\label{sub-extr-cond}
		To illustrate how our method can be used for risk measurement in the extreme regime, we go back to the particular case $(a_1,a_2)=(\text{DAX30}, \text{ETH})$ and study its related conditional quantile inference. For a generic random variable $V\in H$, let us denote by $F_{Y|V=v}$ the conditional cumulative distribution function of $Y$ given $V=v$, where $v\in H$. 
		The (functional) Nadaraya-Watson estimator of $F_{Y|V=v}$ is
		\begin{align}\label{eq:cdf_estimator}
			\hat{F}_{Y|V=v}( y) &:=\sum_{i=1}^n {K\left( \f{{\|V_i-v\|} }{h_n}\right) 1_{\{ Y_i\le y\}}}\left/{\sum_{s=1}^nK\left( \f{{\|V_s-v\|} }{h_n}\right)}\right.,\quad y\in \R.
		\end{align}
		Here, the kernel $K$ is Gaussian and the parameter $h_n\downarrow 0$ is the window bandwidth. 
		The conditional quantile function of $Y|V=v$ is defined thanks to the generalized inverse $q_{Y|V=v}:=F^{-}_{Y|V=v}$ and the associated estimator is
		$$ 
		\hat{q}_{Y|V=v}(\alpha) :=\inf \{y>0,\hat{F}_{Y|V=v}(y)\ge \al \},\quad \alpha\in (0,1),
		$$
		see \citet[Section~6.4]{FerratyVieu2006} for details.
		
		Figure~\ref{fig:quantile_curves} shows the scatterplot for the pair DAX30-ETH of the projected data on the FEPLS direction $(\langle \tilde X_i,\tilde  \beta_{\vfi}\rangle,Y_i)$ for $1\le i \le n$ endowed with the estimated conditional Value-at-Risk curves $t\mapsto \hat{q}_{Y|\langle \tilde X,\tilde  \beta_{\vfi}\rangle=t}(\alpha)$ associated with risk levels $\alpha \in \{ 0.95, 0.98,0.995\}$. Herein, the kernel inference relies on inverting the univariate version of~\eqref{eq:cdf_estimator} with bandwidth $h_n$ arbitrarily chosen to be $5.10^{-5}$. The red dots are associated with
		the $\tilde k$ pairs with highest response values.
		Such a graphic may be used as a visualization tool to detect the most risky situations. Indeed, given a new pair $(x_0,y_0)$, one may determine whether $y_0$ given $\tilde X=x_0$ exceeds the 
		conditional Value-at-Risk $\hat{q}_{Y|\langle \tilde X,\tilde \beta_{\vfi}\rangle=\langle x_0,\tilde \beta_{\vfi}\rangle}$ simply by comparing its projected position $(\langle x_0,\tilde \beta_{\vfi}\rangle,y_0)$ to the quantile curve on Figure~\ref{fig:quantile_curves}. Additional experiments (not reported here) show that the FEPLS outputs are not very sensitive to $\tau$ in this real data application as well as to $\alpha\le 0.98$ for the extreme quantile estimation.

		\begin{figure}[p]
			\centering
			\includegraphics[width=0.8725\linewidth]{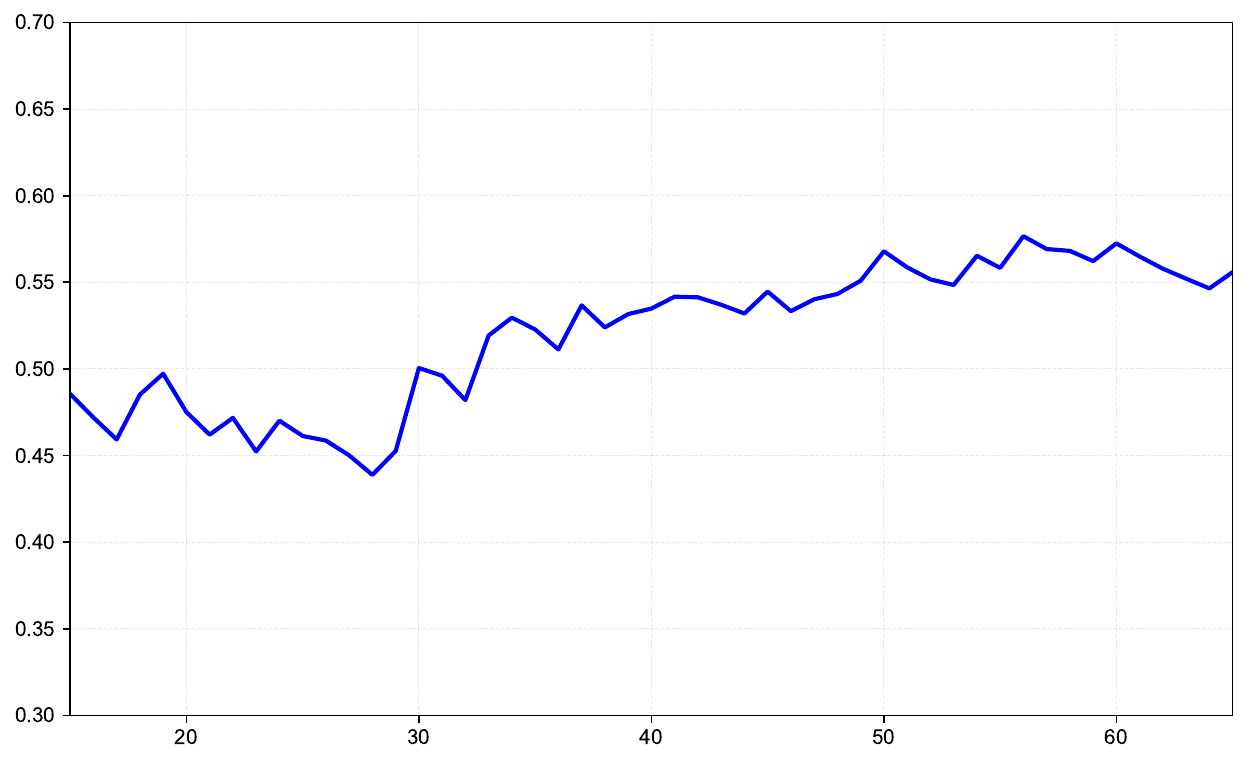}
			\caption{Hill plot for the response variable in the pair DAX30-ETH showing $k\mapsto \hat{\gamma}(k)$ where $k$ is the number of extreme observations.}
			\label{fig:hill}
		\end{figure}
		
		\begin{figure}[p]
			\centering
			\includegraphics[width=0.9\linewidth]{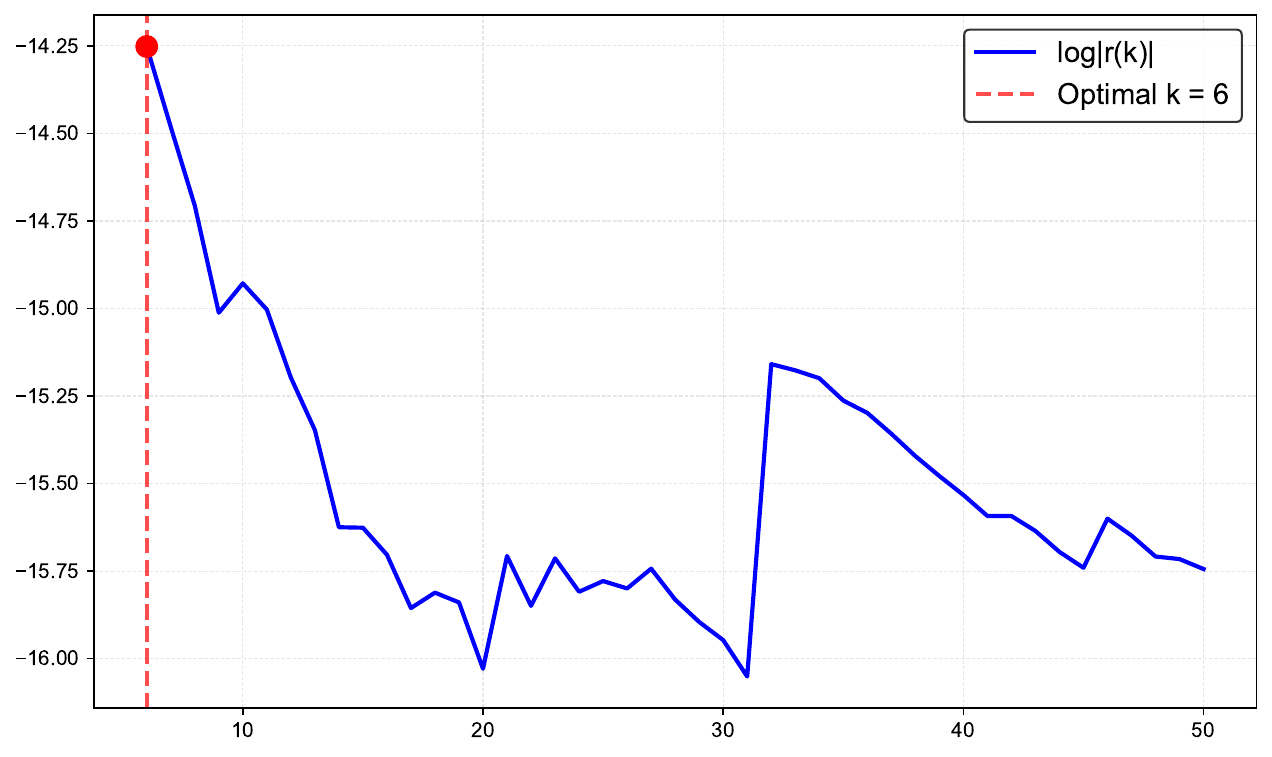}
			\caption{Threshold selection for the DAX30-ETH pair. The plot displays $k\mapsto \log|r(k)|$. The optimal threshold is obtained at $\tilde{k} = 6$ (red dot).}
			\label{fig:threshold_selection}
		\end{figure}
		
		\begin{figure}[p]
			\centering
			\includegraphics[width=0.8\textwidth]{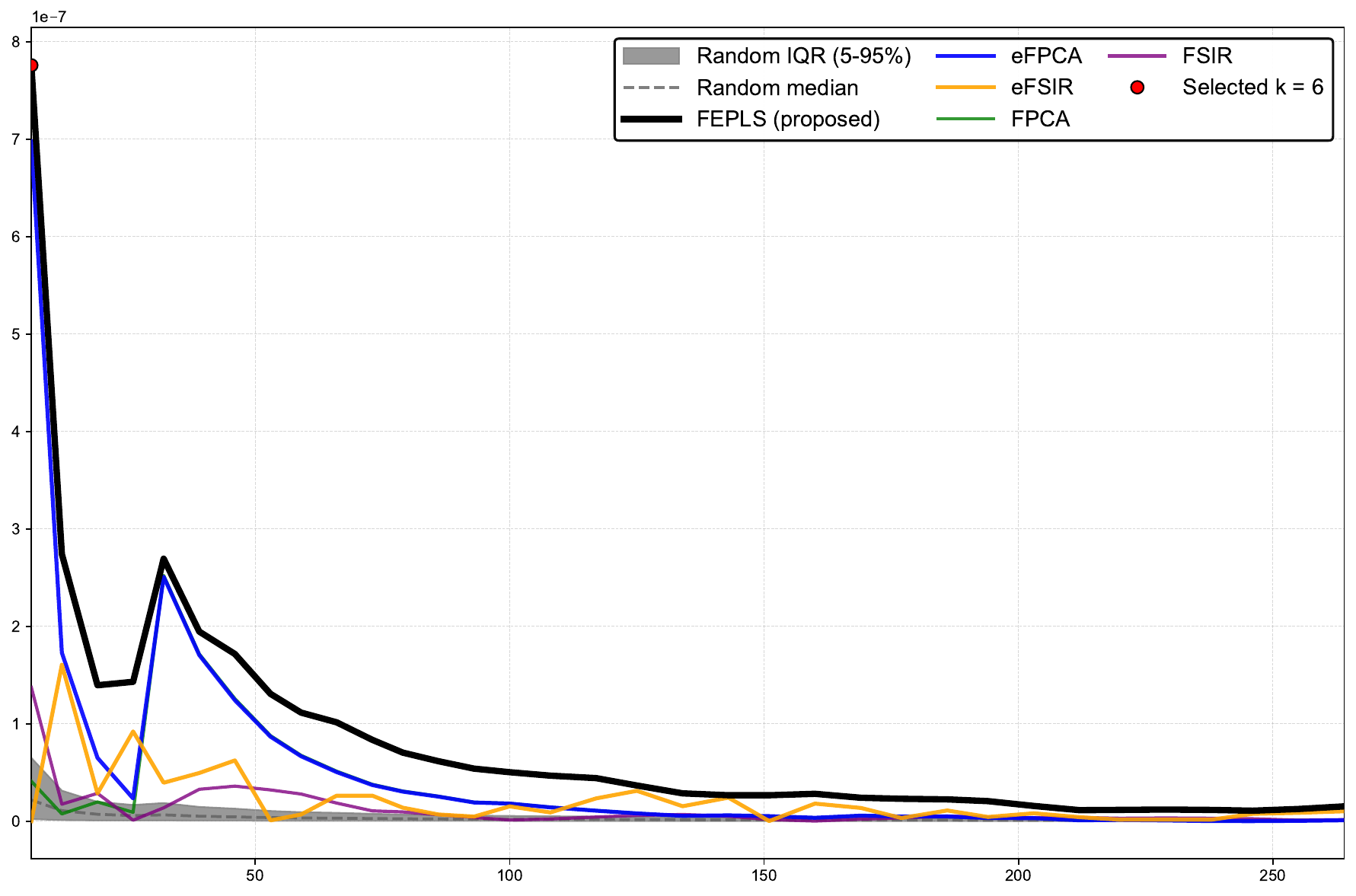}
			\caption{Comparison for the DAX30-ETH pair of FEPLS against four competing methods across threshold values $k$ in the x-axis. Each curve shows $|\rho_m(Y_{n-k+1,n})|$ the absolute unbiased covariance between the method's projection and the response for the top $k$ observations. The grey shaded region represents the 5–95\% inter-quantile range of 5000 random projections.} 
			\label{fig:performance_comparison}
		\end{figure}

		\begin{figure}[p]
			\centering
			\includegraphics[width=0.9\linewidth]{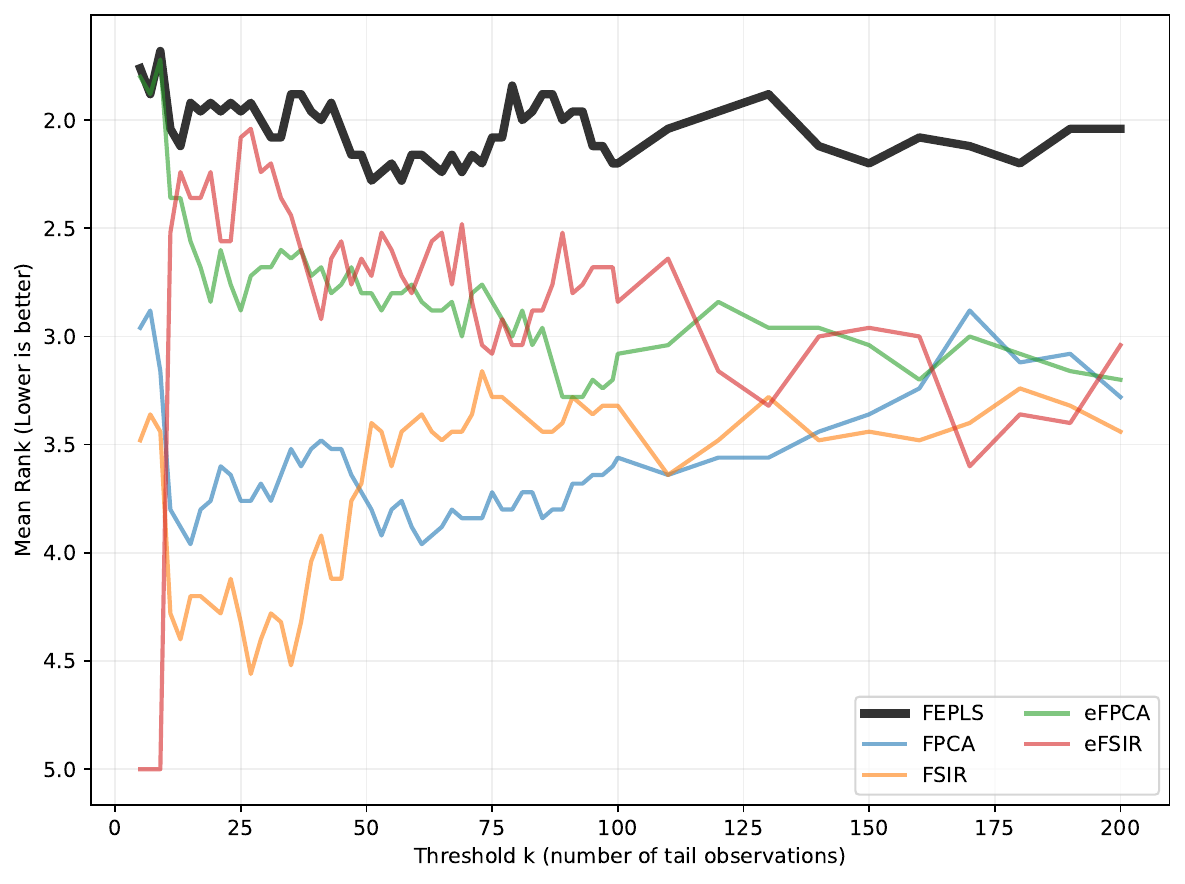}
			\caption{Evolution of mean rank performance across thresholds for the different methods. Results are based on the evaluation over 25 asset pairs.}
			\label{fig:competition}
		\end{figure}
		
		\begin{figure}[p]
			\centering
			\includegraphics[width=0.8\linewidth]{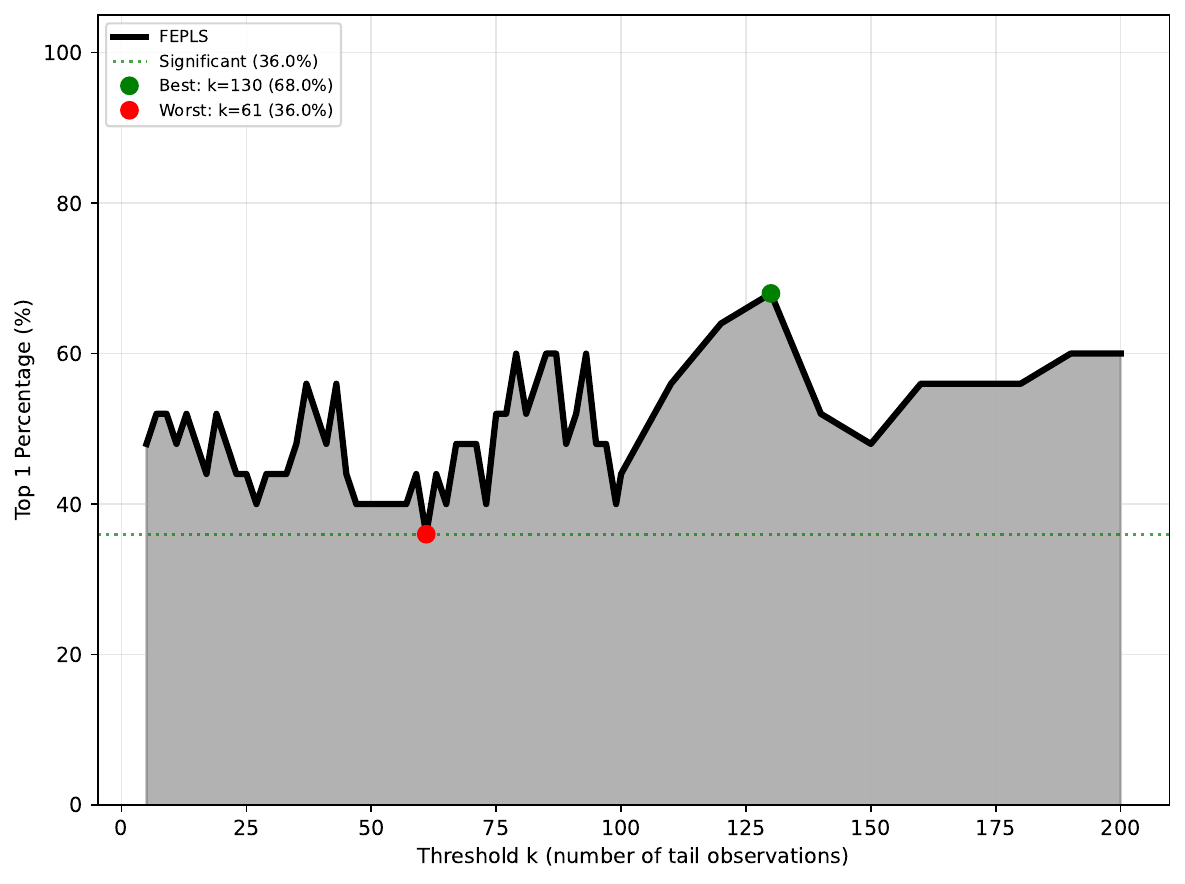}
			\caption{
				Win rate of FEPLS, \emph{i.e.}, proportion of the $25$ pairs on which it ranks first, across threshold values $k$. The green line shows the statistical significance 
				threshold derived from a binomial test at level $5\%$ with 
				$25$ datasets and $5$ competing methods. 
			}
			\label{fig:win_percentage}
		\end{figure}
		\begin{figure}[p]
			\centering   \includegraphics[width=0.8\textwidth]{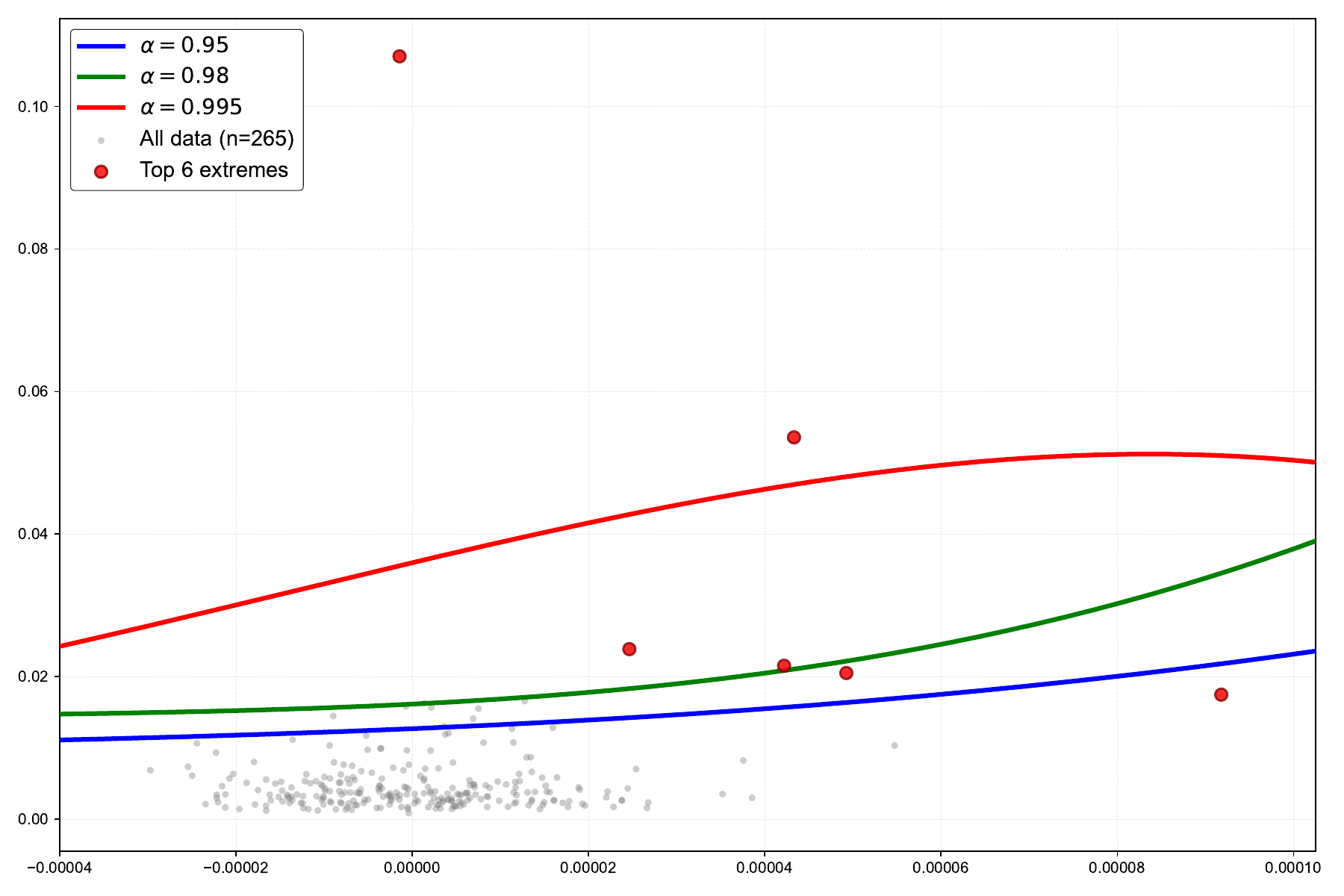}
			\caption{Conditional extreme quantile curves estimated via FEPLS for the DAX30-ETH pair. The plot shows the FEPLS projection score $Z = \langle \tilde{X}, \tilde{\beta}_{\varphi}\rangle_{\Psi}$ versus the response $Y$, with estimated conditional Value-at-Risk curves at different levels. Red dots mark the $\tilde{k}=6$ most extreme observations; gray points show all projected data. 
			}
			\label{fig:quantile_curves}
		\end{figure}

		\clearpage
		\appendix
		
		\section{Technical results}\label{sec-proofs}
		
		This section gathers all the mathematical details behind Section~\ref{sec-theo} and Section~\ref{sec-inf}. We start with \ref{subsection:prelim} providing useful technical tools which enable to manipulate inner products under conditional expectations, or the distribution of order statistics allowing an extension of \citet[Lemma~2]{Bousebata2023} to the random threshold framework. This yields theoretical tail-moments asymptotics which will be extensively used throughout the paper. The section also contains the proof of Proposition~\ref{prop:solution}. Next, the asymptotic behaviour of the relevant empirical tail-moments, involving the noise and its inner product with $\beta$, are investigated in \ref{subsection:empirical_tail_moment}. Finally, our main Theorem \ref{th:main} is established in \ref{subsection:consistency_random} under the inverse single-index model, as well as Proposition~\ref{prop:modelfree} holding for general empirical tail-moments in a model-free setup.
		
		\subsection{Preliminaries}
		\label{subsection:prelim}
		A standard result in the literature of Bochner spaces, \emph{e.g.}, \citet[Eq~(1.2)]{bochner}, states that Bochner integrals commute with bounded linear operators such as orthogonal projections. Note that this is in fact the definition of Pettis integrals. In the Hilbert framework, projections are generally expressed in terms of the inner product. The following technical tool considers this commutation operation with the inner product when the two random variables are conditionally stochastically independent. The case of one variable being deterministic will also often occur in the proofs. Recall the conditional expectation $\E_\mathcal{F}$ and its Bochner version $\E_{\mathcal{F},\mathcal{B}}$, both introduced in Section~\ref{sec-inf}.
		\begin{lem}\label{lem:bochner_random}
			Let $\mathcal{F}$ be a sub-$\sigma$-field of $\mathcal{A}$ and let $W_1,W_2$ be Bochner-integrable random variables having values in $(H,\langle\cdot,\cdot\rangle)$ a separable Hilbert space. Assume that $W_1$ and $W_2$ are independent and, in addition, independent conditionally on $\mathcal{F}$. Then $\langle W_1,W_2\rangle$ is integrable and
			$$
			\E_\mathcal{F} (\langle W_1,W_2\rangle) = \langle \E_{\mathcal{F},\mathcal{B}}(W_1),\E_{\mathcal{F},\mathcal{B}}(W_2)\rangle \quad \text{a.s.}
			$$
		\end{lem}
		\begin{proof}
			Let us fix a countable orthonormal basis $(e_j)_{j\ge 1}$ of $H$. We start by observing that Cauchy-Schwarz's inequality in $H$, the independence of $W_1$ and $W_2$ and Tonelli's theorem entail
			\begin{align}\label{eq:bochner-integrability}
				\E(|\langle W_1,W_2\rangle|)\le\E(\|W_1\|\,\|W_2\|)=\E(\|W_1\|)\,\E(\|W_2\|)<+\infty ,
			\end{align}
			so that all the conditional expectations at hand below are well defined. Denote $\mathcal{G}:=\mathcal{F}\vee\sigma(W_2)$.
			
			The first step is to establish the following pull-out property: for any sub-$\sigma$-field $\mathcal{H}$ of $\mathcal{A}$, any Bochner-integrable random variable $U$ having values in $H$ and any $\mathcal{H}$-measurable random variable $V$ having values in $H$ such that $\E(\|U\|\,\|V\|)<+\infty$, one has
			\begin{align}\label{eq:pullout}
				\E_{\mathcal{H}}(\langle U,V\rangle)=\langle \E_{\mathcal{H},\mathcal{B}}(U),V\rangle \quad\text{a.s.}
			\end{align}
			First, when $V=\sum_{m=1}^{M} 1_{A_m} z_m$ is a simple function, with $A_1,\dots,A_M\in\mathcal{H}$ pairwise disjoint and $z_1,\dots,z_M\in H$, \eqref{eq:pullout} follows from the linearity of the conditional expectation, the $\mathcal{H}$-measurability of the indicator functions and the fact that Bochner conditional expectations commute with bounded linear forms such as $\langle\cdot,z_m\rangle$, see \citet[Section~2.6]{bochner}. 
			In the general case, the strong $\mathcal{H}$-measurability of $V$ provides simple $\mathcal{H}$-measurable functions $\tilde V_M\to V$ pointwise. Replacing $\tilde V_M$ by $V_M:=\tilde V_M 1_{\{\|\tilde V_M\|\le 2\|V\|\}}$, which is still simple and $\mathcal{H}$-measurable, we get $V_M\to V$ with $\|V_M\|\le 2\|V\|$ pointwise. Since $|\langle U,V_M\rangle|\le 2\|U\|\,\|V\|$ is integrable, the conditional dominated convergence theorem applies to the left-hand side of \eqref{eq:pullout} written for $V_M$, while the right-hand side converges almost surely by continuity of the inner product. This proves \eqref{eq:pullout}.
			
			Next, let us show that
			\begin{align}\label{eq:condind-bochner}
				\E_{\mathcal{G},\mathcal{B}}(W_1)=\E_{\mathcal{F},\mathcal{B}}(W_1)\quad\text{a.s.}
			\end{align}
			For any $z\in H$, the conditional independence of $W_1$ and $W_2$ given $\mathcal{F}$ entails $\E_{\mathcal{G}}(\langle W_1,z\rangle)=\E_{\mathcal{F}}(\langle W_1,z\rangle)$ a.s., see \citet[Proposition~6.6]{kallenberg2002}. 
			Commuting again the Bochner conditional expectation with bounded linear forms, this rewrites $\langle\E_{\mathcal{G},\mathcal{B}}(W_1),z\rangle = \langle\E_{\mathcal{F},\mathcal{B}}(W_1),z\rangle$ a.s., and letting $z$ run through the countable basis $(e_j)_{j\ge 1}$ yields \eqref{eq:condind-bochner}.
			
			The last step is to combine \eqref{eq:pullout} and \eqref{eq:condind-bochner}. The random variable $\E_{\mathcal{F},\mathcal{B}}(W_1)$ is $\mathcal{F}$-measurable and the conditional Jensen inequality $\|\E_{\mathcal{F},\mathcal{B}}(W_1)\|\le\E_{\mathcal{F}}(\|W_1\|)$ combined with the conditional independence of $\|W_1\|$ and $\|W_2\|$ given $\mathcal{F}$ yields
			\begin{align*}
			\E(\|\E_{\mathcal{F},\mathcal{B}}(W_1)\|\,\|W_2\|)&\le \E\big(\E_{\mathcal{F}}(\|W_1\|)\,\E_{\mathcal{F}}(\|W_2\|)\big)=\E\big(\E_{\mathcal{F}}(\|W_1\|\,\|W_2\|)\big)\\
			&=\E(\|W_1\|\,\|W_2\|)<+\infty
			\end{align*}
			in view of \eqref{eq:bochner-integrability}. Whence, the tower property, \eqref{eq:pullout} applied with $(U,V,\mathcal{H})=(W_1,W_2,\mathcal{G})$, identity \eqref{eq:condind-bochner}, and again \eqref{eq:pullout}, now applied with $\mathcal{H}=\mathcal{F}$, $U=W_2$ and $V=\E_{\mathcal{F},\mathcal{B}}(W_1)$, entail
			\begin{align*}
			\E_\mathcal{F}(\langle W_1,W_2\rangle)=\E_\mathcal{F}\big(\E_\mathcal{G}(\langle W_1,W_2\rangle)\big)&=\E_\mathcal{F}\big(\langle \E_{\mathcal{F},\mathcal{B}}(W_1),W_2\rangle\big)\\
			&=\langle \E_{\mathcal{F},\mathcal{B}}(W_1),\E_{\mathcal{F},\mathcal{B}}(W_2)\rangle\quad\text{a.s.},
			\end{align*}
			which ends the proof.
		\end{proof}

		\noindent We may now establish the explicit solution of the optimization problem \eqref{eq:fepls} under the inverse single-index model \eqref{eq:single_index_model}. The following proof is a simpler version of \citet[Proposition~1]{Bousebata2023} but it extends to the functional case. 
		\begin{proof}[Proof of Proposition \ref{prop:solution}]
			We start by rewriting the conditional covariance. Let $w\in H$ such that $\|w\|=1$ and let $y\in\R$ such that $\bar{F}(y)>0$. By Cauchy-Schwarz's inequality in $H$, $|\langle w,X\rangle Y|\le \|X\|\,|Y|$ and $|\langle w,X\rangle|\le \|X\|$, so that the moment assumption ensures that all the expectations at hand below are finite; in particular, $m_{XY}(y)$ and $m_X(y)$ are well-defined Bochner integrals. Then,
			\begin{align*}
				\Cov(\langle w,X\rangle,Y | Y\geq y) &=  \frac{\E(\langle w,X\rangle Y  1_{\{Y\geq y\}})}{\bar{F}(y)}- \frac{\E(\langle w,X\rangle1_{\{Y\geq y\}})\,\E(Y  1_{\{Y\geq y\}})}{\bar{F}(y)^2}.\end{align*}
			Next, since Bochner integrals commute with bounded linear forms such as $\langle w,\cdot\rangle$, see \citet[Eq~(1.2)]{bochner}, one has $\E(\langle w,X\rangle Y1_{\{Y\ge y\}})=\langle w,m_{XY}(y)\rangle$ and $\E(\langle w,X\rangle 1_{\{Y\ge y\}})=\langle w,m_{X}(y)\rangle$, whence
			\begin{align*}
				\bar{F}^2(y)\, \Cov(\langle w,X\rangle,Y | Y\geq y)   &= \bar{F}(y)\langle w,m_{XY}(y)\rangle - \langle w,m_Y(y)m_{X}(y)\rangle   = \langle w, v(y)\rangle.
			\end{align*}
			The optimization problem now becomes $\argmax_{\|{ w}\|=1} \langle w,v(y)\rangle$. By Cauchy-Schwarz's inequality, $\langle w,v(y)\rangle \le  \|v(y)\|$ for any $w\in H$ such that ${\|{ w}\|=1}$, with equality if and only if $w=v(y)/\|v(y)\|$ since $v(y)\neq 0$. This proves both optimality and uniqueness, and ends the proof.
		\end{proof}

		\noindent 
		\noindent The next lemma describes the exact conditional distribution of the whole sample given the intermediate order statistic $Y_{n-k+1,n}$. It is the key tool to handle tail-moments at the random threshold: it extends \citet[Lemma~2]{Bousebata2023} to this framework, see Lemma~\ref{lem:bousebata_random} below, and provides the second-moment identity of Lemma~\ref{lem:second_moment}, both of which are extensively used in the sequel. Recall that, given $\{Y_{n-k+1,n}=y\}$, the conditional expectation is denoted by $\E_{k,y}(\cdot):=\E(\cdot\mid Y_{n-k+1,n}=y)$ and the conditional tail-moment of a generic random object $W_i$, $1\le i \le n$, by $m_{k,y}(W_i):=\E_{k,y}(W_i1_{\{Y_i\ge Y_{n-k+1,n}\}})$. In the sequel, $\mathcal{L}(\cdot)$ and $\mathcal{L}(\cdot\mid\cdot)$ stand for the distribution and the conditional distribution of the random object at hand.
\begin{lem}\label{lem:cond_sample}
			Let $(X_i,Y_i)_{1\le i \le n}$ be independent copies of $(X,Y)\in H\times \R$ and assume that $Y$ admits a density $f$. Let $k$ be an integer such that $2\le  k < n$ and let $y\in\R$ such that $f(y)>0$ and $0<F(y)<1$. Then, conditionally on $\{Y_{n-k+1,n}=y\}$, the sample $(X_i,Y_i)_{1\le i\le n}$ is distributed as follows: a pair $(i_0,S)$ is drawn uniformly at random among the $n\binom{n-1}{k-1}$ pairs made of an index $i_0\in\{1,\dots,n\}$ and a subset $S\subset\{1,\dots,n\}\setminus\{i_0\}$ with $|S|=k-1$; given $(i_0,S)$, the pairs $(X_i,Y_i)$, $1\le i\le n$, are independent, with
			\begin{align*}
			(X_j,Y_j)&\sim \mathcal{L}\big((X,Y)\mid Y>y\big) \mbox{ for } j\in S,\\
			(X_j,Y_j)&\sim \mathcal{L}\big((X,Y)\mid Y<y\big) \mbox{ for } j\notin S\cup\{i_0\},
			\end{align*}
			while $Y_{i_0}=y$ and $X_{i_0}\sim \mathcal{L}(X\mid Y=y)$. In words, $i_0$ is the index of the observation achieving the order statistic, the indices of $S$ are those of the $k-1$ observations falling above the threshold $y$, and the $n-k$ remaining observations fall below. In particular, writing $e_>(\Psi):=\E(\Psi(X,Y)\mid Y>y)$ and $e_=(\Psi):=\E(\Psi(X,Y)\mid Y=y)$ for any measurable function $\Psi:H\times\R\to[0,+\infty]$, one has, for any such $\Psi,\Psi'$ and any $1\le i\ne \ell\le n$:
			\begin{align}
				\label{eq:strata-one}
				\E_{k,y}\big(\Psi(X_i,Y_i)1_{\{Y_i\ge y\}}\big) &= \f{k-1}{n}\,e_>(\Psi)+\ff{n}\,e_=(\Psi),\\
				\nonumber
				\E_{k,y}\big(\Psi(X_i,Y_i)\Psi'(X_\ell,Y_\ell)1_{\{Y_i\ge y\}}1_{\{Y_\ell\ge y\}}\big) &= \f{(k-1)(k-2)}{n(n-1)}\,e_>(\Psi)\,e_>(\Psi')\\
				\label{eq:strata-two}
				&+\f{k-1}{n(n-1)}\Big(e_>(\Psi)\,e_=(\Psi')+e_=(\Psi)\,e_>(\Psi')\Big).
			\end{align}
			By decomposing into positive and negative parts, \eqref{eq:strata-one} extends to any measurable $\Psi:H\times\R\to\R$ such that $\E_{k,y}\big(|\Psi(X_i,Y_i)|1_{\{Y_i\ge y\}}\big)<+\infty$.
		\end{lem}
		\begin{proof}
			We start by proving the representation for the sub-sample $(Y_1,\dots,Y_n)$. Since $F$ is continuous, the values $Y_1,\dots,Y_n$ are a.s. pairwise distinct. For $(i_0,S)$ as in the statement, define the event
			\begin{align*}
			E(i_0,S):=\big\{Y_j>Y_{i_0}\ \mbox{for any}\ j\in S\big\}\cap\big\{Y_j<Y_{i_0}\ \mbox{for any}\ j\notin S\cup\{i_0\}\big\}.
			\end{align*}
			On $E(i_0,S)$, exactly $k-1$ values exceed $Y_{i_0}$, so that $Y_{n-k+1,n}=Y_{i_0}$. Up to a $\p$-null set, the event $\{Y_{n-k+1,n}\le y\}$ is thus the disjoint union over the $n\binom{n-1}{k-1}$ pairs $(i_0,S)$ of the events $E(i_0,S)\cap\{Y_{i_0}\le y\}$. For any Borel sets $B_1,\dots,B_n$ of $\R$, conditioning on $Y_{i_0}$ and using independence yield
			\begin{align*}
			&\p\Big(\bigcap_{i=1}^n\{Y_i\in B_i\}\cap E(i_0,S)\cap\{Y_{i_0}\le y\}\Big)\\
			&\qquad=\int_{-\infty}^{y}f(u)\,1_{B_{i_0}}(u)\prod_{j\in S}\p\big(Y\in B_j,\, Y>u\big)\prod_{j\notin S\cup\{i_0\}}\p\big(Y\in B_j,\, Y<u\big)\,\dd u .
			\end{align*}
			Choosing $B_1=\dots=B_n=\R$ and summing over the pairs $(i_0,S)$ recovers the classical density of the order statistic, $f_{Y_{n-k+1,n}}(u)=n\binom{n-1}{k-1}f(u)\bar{F}^{k-1}(u)F^{n-k}(u)$. The conditional distribution given $\{Y_{n-k+1,n}=y\}$ is, for almost every such $y$, the ratio of the integrand at $u=y$ to this density:
			\begin{align*}
			&\p\Big(\bigcap_{i=1}^n\{Y_i\in B_i\}\cap E(i_0,S)\,\Big|\, Y_{n-k+1,n}=y\Big)\\
			&\qquad=\ff{n\binom{n-1}{k-1}}\,1_{B_{i_0}}(y)\prod_{j\in S}\p\big(Y\in B_j\mid Y>y\big)\prod_{j\notin S\cup\{i_0\}}\p\big(Y\in B_j\mid Y<y\big).
			\end{align*}
			In other words, each pair $(i_0,S)$ has probability $\big(n\binom{n-1}{k-1}\big)^{-1}$ and, given $(i_0,S)$, the random variables $Y_j$, $j\ne i_0$, are independent with $Y_j\sim\mathcal{L}(Y\mid Y>y)$ for $j\in S$ and $Y_j\sim\mathcal{L}(Y\mid Y<y)$ otherwise, while $Y_{i_0}=y$.

			Let us now incorporate the $X_i$'s. Conditionally on $(Y_1,\dots,Y_n)$, the random variables $X_1,\dots,X_n$ are independent with $X_i\sim\mathcal{L}(X\mid Y=Y_i)$. Both $(i_0,S)$ and the event $\{Y_{n-k+1,n}=y\}$ only depend on $(Y_1,\dots,Y_n)$, so this remains true under the above conditioning. Hence, given $(i_0,S)$, the pairs $(X_i,Y_i)$, $1\le i\le n$, are independent, $Y_{i_0}=y$ with $X_{i_0}\sim\mathcal{L}(X\mid Y=y)$, and, for $j\in S$, the pair $(X_j,Y_j)$ is obtained by drawing $Y_j\sim\mathcal{L}(Y\mid Y>y)$ and then $X_j\sim\mathcal{L}(X\mid Y=Y_j)$. This two-step draw has distribution $\mathcal{L}((X,Y)\mid Y>y)$ since, for any measurable sets $A\subset H$ and $B\subset\R$, the tower property gives
			\begin{align*}
			\p(X_j\in A,\,Y_j\in B)=\E\big(1_{\{Y\in B\}}\,\p(X\in A\mid Y)\,\big|\,Y>y\big)=\p\big(X\in A,\,Y\in B\mid Y>y\big).
			\end{align*}
			Likewise, $(X_j,Y_j)\sim\mathcal{L}((X,Y)\mid Y<y)$ for $j\notin S\cup\{i_0\}$, which proves the representation.

			It remains to derive \eqref{eq:strata-one} and \eqref{eq:strata-two}. Under the representation, $\{Y_i\ge y\}=\{i\in S\cup\{i_0\}\}$ a.s., so that, conditionally on $(i_0,S)$,
			\begin{align*}
			\E_{k,y}\big(\Psi(X_i,Y_i)1_{\{Y_i\ge y\}}\mid (i_0,S)\big)=e_>(\Psi)\,1_{\{i\in S\}}+e_=(\Psi)\,1_{\{i_0=i\}},
			\end{align*}
			and, for $i\ne\ell$, the conditional independence of $(X_i,Y_i)$ and $(X_\ell,Y_\ell)$ given $(i_0,S)$ entails
			\begin{align*}
			&\E_{k,y}\big(\Psi(X_i,Y_i)\Psi'(X_\ell,Y_\ell)1_{\{Y_i\ge y\}}1_{\{Y_\ell\ge y\}}\mid (i_0,S)\big)\\
			&\qquad=\Big(e_>(\Psi)\,1_{\{i\in S\}}+e_=(\Psi)\,1_{\{i_0=i\}}\Big)\Big(e_>(\Psi')\,1_{\{\ell\in S\}}+e_=(\Psi')\,1_{\{i_0=\ell\}}\Big).
			\end{align*}
			By exchangeability of $(i_0,S)$,
			\begin{align*}
			\p(i_0=i)=\ff{n},&\quad \p(i\in S)=\f{k-1}{n},\\
			\p(i\in S,\,\ell=i_0)=\f{k-1}{n(n-1)},&\quad \p(i\in S,\,\ell\in S)=\f{(k-1)(k-2)}{n(n-1)} ,
			\end{align*}
			and $\{i_0=i\}\cap\{i_0=\ell\}=\emptyset$. Taking the expectation with respect to $(i_0,S)$ in the two previous displays yields \eqref{eq:strata-one} and \eqref{eq:strata-two}, which ends the proof.
		\end{proof}
		\noindent The first consequence extends \citet[Lemma~2]{Bousebata2023} to the random threshold framework; it corrects the deterministic-level statement by the diagonal contribution of the observation achieving the order statistic.
		\begin{lem}\label{lem:bousebata_random}
			Under the assumptions of Lemma~\ref{lem:cond_sample}, let $h:\R\to\R$ be a measurable function such that $\E(|h(Y)|1_{\{Y> y\}})<+\infty$. Then, for any $1\le i\le n$,
			\begin{align}\label{eq:bousebata-exact}
				m_{k,y}(h(Y_i))=\f{k-1}{n}\,\ff{\bar{F}(y)}\int_y^{+\infty}h(t)f(t)\,\dd t+\f{h(y)}{n} ;
			\end{align}
			in particular, $m_{k,y}(1)=k/n$. If moreover $h\in\RV_{\rho}(+\infty)$ with $\rho\gamma<1$ and \eqref{hyp:vonmises} holds, then, as $y\to+\infty$,
			\begin{align}\label{eq:bousebata-asymp}
				m_{k,y}(h(Y_i))=\f{k-1}{n}\,\f{h(y)}{1-\rho\gamma}\,\big(1+o(1)\big)+\f{h(y)}{n},
			\end{align}
			where the $o(1)$ term depends neither on $k$ nor on $n$.
		\end{lem}
		\begin{proof}
			Identity \eqref{eq:bousebata-exact} is \eqref{eq:strata-one} applied to $\Psi(x,t)=h(t)$, since
			$$
			e_>(\Psi)=\bar{F}(y)^{-1}\int_y^{+\infty}h(t)f(t)\,\dd t \quad\mbox{ and }\quad e_=(\Psi)=h(y).
			$$ The asymptotic equivalent $\bar{F}(y)^{-1}\int_y^{+\infty}h(t)f(t)\dd t\to h(y)/(1-\rho\gamma)$ as $y\to+\infty$, which involves neither $k$ nor $n$, is provided by \citet[Lemma~2]{Bousebata2023}, which ends the proof.
		\end{proof}
		\begin{Remark}\label{rmk:bousebata-atom}
			The diagonal term $h(y)/n$ in \eqref{eq:bousebata-exact} is negligible in all our uses, where $k=k_n\to+\infty$, but it must be kept for the identity to be exact, as the check $m_{k,y}(1)=k/n$ shows. In the sequel, \citet[Lemma~2]{Bousebata2023} remains the reference for tail-moments at a deterministic level, and Lemma~\ref{lem:bousebata_random} whenever the threshold is the random order statistic $Y_{n-k+1,n}$.
		\end{Remark}
		\noindent The second consequence is the exact second conditional moment of empirical tail-moments. Note that the inflation factors of the two conditional variances below are $(k-1)/n^2$ and $1/n^2$, and not $1/n$ times the corresponding probabilities: the indicator functions $1_{\{Y_i\ge Y_{n-k+1,n}\}}$, $1\le i \le n$, are not conditionally independent given $Y_{n-k+1,n}$, and the exact combinatorial structure of Lemma~\ref{lem:cond_sample} is what makes the computation below valid.
		\begin{lem}\label{lem:second_moment}
			Under the assumptions of Lemma~\ref{lem:cond_sample}, let $Z_i:=\Psi(X_i,Y_i)$, $1\le i\le n$, for some measurable function $\Psi:H\times\R\to H$ such that $e_>(\|\Psi\|^2)<+\infty$ and $e_=(\|\Psi\|^2)<+\infty$, and set
			\begin{align*}
			\hat m_Z(Y_{n-k+1,n})&:=\ff{n}\sum_{i=1}^n Z_i1_{\{Y_i\ge Y_{n-k+1,n}\}},\\
			\mu_>:=\E_{\mathcal{B}}(Z_1\mid Y_1>y)&\quad\mbox{and}\quad \mu_=:=\E_{\mathcal{B}}(Z_1\mid Y_1=y).
			\end{align*}
			Then $m_{k,y}(Z_1)=\f{k-1}{n}\mu_>+\ff{n}\mu_=$ and
			\begin{align}
				\nonumber
				\E_{k,y}\big(\|\hat m_Z(Y_{n-k+1,n})\|^2\big)&=\|m_{k,y}(Z_1)\|^2+\f{k-1}{n^2}\Big(e_>(\|\Psi\|^2)-\|\mu_>\|^2\Big)+\ff{n^2}\Big(e_=(\|\Psi\|^2)-\|\mu_=\|^2\Big)\\
				\label{eq:second-moment-bound}
				&\le \|m_{k,y}(Z_1)\|^2+\ff{n}\,m_{k,y}\big(\|Z_1\|^2\big).
			\end{align}
			The same statements hold for real-valued $Z_i$, replacing inner products and norms accordingly.
		\end{lem}
		\begin{proof}
			The formula for $m_{k,y}(Z_1)$ is \eqref{eq:strata-one}; it extends to $H$-valued integrands by testing against a countable orthonormal basis, since bounded linear forms commute with Bochner conditional expectations. Next, expanding the squared norm,
			$$
			\E_{k,y}\big(\|\hat m_Z(Y_{n-k+1,n})\|^2\big)=\ff{n^2}\sum_{i\ne\ell}\E_{k,y}\big(\langle Z_i1_{\{Y_i\ge y\}},Z_\ell1_{\{Y_\ell\ge y\}}\rangle\big)+\ff{n^2}\sum_{i=1}^n\E_{k,y}\big(\|Z_i\|^21_{\{Y_i\ge y\}}\big) .
			$$
			Conditionally on $(i_0,S)$ from the representation of Lemma~\ref{lem:cond_sample}, the random variables $Z_i1_{\{Y_i\ge y\}}$, $1\le i\le n$, are mutually independent, so that, for any $i\ne\ell$, the unconditional version of Lemma~\ref{lem:bochner_random} applied under $\p(\cdot\mid (i_0,S),\,Y_{n-k+1,n}=y)$ factorizes each off-diagonal term into the inner product of the two corresponding conditional means. Averaging over $(i_0,S)$ with the probabilities collected in the proof of Lemma~\ref{lem:cond_sample} gives, exactly as in \eqref{eq:strata-two},
			$$
			\sum_{i\ne\ell}\E_{k,y}\big(\langle Z_i1_{\{Y_i\ge y\}},Z_\ell1_{\{Y_\ell\ge y\}}\rangle\big)=(k-1)(k-2)\,\|\mu_>\|^2+2(k-1)\,\langle\mu_>,\mu_=\rangle .
			$$
			Besides, \eqref{eq:strata-one} applied to $\|\Psi\|^2$ yields $\sum_{i=1}^n\E_{k,y}(\|Z_i\|^21_{\{Y_i\ge y\}})=(k-1)\,e_>(\|\Psi\|^2)+e_=(\|\Psi\|^2)$, while
			\begin{align*}
			\|m_{k,y}(Z_1)\|^2&=\Big(\f{k-1}{n}\Big)^2\|\mu_>\|^2+\f{2(k-1)}{n^2}\langle\mu_>,\mu_=\rangle+\ff{n^2}\|\mu_=\|^2 .
			\end{align*}
			Collecting the last three displays, the cross terms $\langle\mu_>,\mu_=\rangle$ cancel out and the stated identity follows from $(k-1)(k-2)-(k-1)^2=-(k-1)$. At last, the bound \eqref{eq:second-moment-bound} is obtained by discarding the nonpositive terms $-\|\mu_>\|^2$ and $-\|\mu_=\|^2$ and recognizing $\ff{n}m_{k,y}(\|Z_1\|^2)=\f{k-1}{n^2}\,e_>(\|\Psi\|^2)+\ff{n^2}\,e_=(\|\Psi\|^2)$ from \eqref{eq:strata-one}, which ends the proof.
		\end{proof}
		\begin{Remark}\label{rmk:second-moment-centred}
			If, in addition, $\E_{\mathcal{B}}(Z_1\mid Y_1=t)=0$ for almost every $t\ge y$, then $\mu_>=\mu_==0$, $m_{k,y}(Z_1)=0$, and \eqref{eq:second-moment-bound} is an equality:
			$$\E_{k,y}\big(\|\hat m_Z(Y_{n-k+1,n})\|^2\big)=\ff{n}\,m_{k,y}(\|Z_1\|^2).$$ This covers in particular the conditionally centred residual $\xi$ of Section~\ref{sec-inf} through $Z_i=\xi_i$, and makes rigorous the corresponding step in the proof of Proposition~\ref{prop:modelfree}.
		\end{Remark}

		\subsection{Empirical tail-moments}
		\label{subsection:empirical_tail_moment}
		
		The next lemma establishes a bound on the (random) tail moments of $\varphi(Y)\eps$.
		\begin{lem}\label{lem:norm_noise}
			Assume that \eqref{hyp:vonmises}, \eqref{hyp:test}, \eqref{hyp:link}, \eqref{hyp:noise_cond}, \eqref{hyp:2cgamma} and \eqref{hyp:qcgamma} hold. Let $k:=k_n\to+\infty$ be an integer deterministic sequence such that $ k / n \to 0$ and $y_{n,k}\sim U(n/k)$ as $n\to+\infty$. Let $\delta_{n,k}:=(g(y_{n,k})(k/n)^{1/q})^{-1}$. Then,
			\begin{align*}
				\f{\|\hat m_{\vfi(Y)\eps}(Y_{n-k+1,n})\|}{m_{\vfi  g(Y)}(y_{n,k})} = O_{\p}\left( \delta_{n,k} \right) \xrightarrow[n\to +\infty]{\p}0.
			\end{align*}
		\end{lem}
		\begin{proof}
			For clarity, denote $Z_1:=\vfi(Y_1)\eps_1$ and $Y_{n,k}:=Y_{n-k+1,n}$. Let $y>0$. Highlighting the dependence on $y$, we denote the conditional expectation given $\{Y_{n,k}=y\}$ by $\E_{k,y}(\cdot):=\E(\cdot|Y_{n,k}=y)$ and, similarly, let us denote the conditional tail-moments of any generic random variable $W_i$, $1\le i \le n$, by 
			$$
			m_{k,y}(W_i) := \E_{k,y} ( W_i 1_{\{Y_i\ge Y_{n,k}\}}).
			$$ 
			We start by computing the expectation of $\|\hat m_{Z}(Y_{n,k})\|^2$ which is by construction,
			\begin{align*}
				\|\hat m_{Z}(Y_{n,k})\|^2&= \ff{n^2} \sum_{i_1,i_2=1}^n \langle Z_{i_1}  ,Z_{i_2}  \rangle 1_{\{ Y_{i_1}\ge Y_{n,k} \}}1_{\{ Y_{i_2}\ge Y_{n,k} \}}.
			\end{align*}
			Taking the conditional expectation and splitting the sum, it follows,
			\begin{align*}
				\E_{k,y}(\|\hat m_{Z}(Y_{n,k})\|^2) &=\ff{n^2} \sum_{i_1\ne i_2}^n \E_{k,y} \left(\langle Z_{i_1} 1_{\{ Y_{i_1}\ge Y_{n,k} \}}  , Z_{i_2} 1_{\{ Y_{i_2}\ge Y_{n,k} \}}  \rangle \right)\\
				&+ \ff{n^2} \sum_{i=1}^n \E_{k,y} \left(\|Z_{i} \|^2 1_{\{ Y_{i}\ge Y_{n,k} \}}\right).
			\end{align*}
Lemma~\ref{lem:second_moment} and equidistribution entail
			\begin{align*}
				\E_{k,y}(\|\hat m_{Z}(Y_{n,k})\|^2)&\le 
				\|m_{k,y}(Z_1)\|^2+ \ff{n}\, m_{k,y}(\|Z_1\|^2).
			\end{align*}
			The main tool of the proof is the conditional Markov's inequality which gives, for any $\eta'>0$, 
			\begin{align}\label{eq:markov}
				\p_{k,y}\left( {\f{\|\hat m_{Z}(Y_{n,k})\|}{m_{\vfi g(Y)}(y_{n,k})}} >\eta'\right) \le \f{1}{\eta'^2} \E_{k,y}\left(\|\hat m_{Z}(y)\|^2\right) m^{-2}_{\vfi g(Y)}(y_{n,k}).
			\end{align}
			Here $y_{n,k}$ is deterministic, so $m_{\vfi g(Y)}(y_{n,k})$ is a tail-moment at a deterministic level.
            According to \citet[Lemma~2]{Bousebata2023} under \eqref{hyp:2cgamma}, there exists some constant $c\in (0,+\infty)$ independent of $n$ such that, for $n$ large enough, Markov's inequality~\eqref{eq:markov} becomes
			$$ 
			\p_{k,y}\left( {\f{\|\hat m_{Z}(Y_{n,k})\|}{m_{\vfi g(Y)}(y_{n,k})}} >\eta'\right) \le \f{c}{\eta'^2} (\vfi g  \bar{F})^{-2}(y_{n,k}) \left( \|m_{k,y}(Z_1)\|^2 \vee \ff{n}m_{k,y}(\|Z_1\|^2)\right) .
			$$
The next step is to bound the term involving the conditional tail-moments. To this end, observe that the conditional expectation is a contraction for the norm of $H$, by the conditional Jensen inequality for Bochner integrals, so that $\|m_{k,y}(Z_1)\|\le m_{k,y}(\|Z_1\|)= m_{k,y}({\vfi(Y_1)}\|\eps_1\|)$, where the last equality uses $\vfi\ge 0$. By conditional Cauchy-Schwarz's inequality in $L^2(\Omega,\mathcal{A},\p)$, we may then bound 
			\begin{align}\label{eq:useful_ineq}
				\|m_{k,y}(Z_1)\|^2\le m^2_{k,y}({\vfi(Y_1)}\|\eps_1\|) 
				\le m_{k,y}(\|Z_1\|^2)\,m_{k,y}(1),
			\end{align}
			and therefore
			\begin{align*} 
				\| m_{k,y}(Z_1)\|^2 \vee \ff{n}m_{k,y}(\|Z_1\|^2)  &\le  m_{k,y}(\|Z_1\|^2)\left(\frac{1}{n} \vee m_{k,y}(1)\right).
			\end{align*}
Besides, Lemma~\ref{lem:bousebata_random} yields $m_{k,y}(1)=k/n$, so that 
			\begin{align} 
				\label{eq-new-bound}
				\| m_{k,y}(Z_1)\|^2 \vee \ff{n}m_{k,y}(\|Z_1\|^2)  &\le  \frac{k}{n}\, m_{k,y}(\|Z_1\|^2).
			\end{align}
			The focus now turns to the tail-moment of $\|Z_1\|^2$ in the previous inequality. The $q$-moment of $\|\eps_1\|$ is finite under~\eqref{hyp:noise_cond} so that Hölder's inequality yields
			\begin{align}
				\label
				{eq-tmp1}
				m_{k,y}(\|Z_1\|^2)&\le\E_{k,y}\left(\|\eps_1\|^{q}\right)^{2/q}   m^{1-2/q}_{k,y}({\vfi(Y_1)}^{{2q}/(q-2)}).
			\end{align}
			Note that the latter tail moment is finite if ${2\tau q}/(q-2) < {1/\gamma}$ which is satisfied for any $q>2$ and $\gamma \in (0,1)$ whenever $\tau\le 0$. This existence condition is also satisfied  when $\tau>0$ under~\eqref{hyp:2cgamma}-\eqref{hyp:qcgamma}. Indeed, the condition at hand is equivalent to $\tau<(1-2/q)/(2\gamma)$ while~\eqref{hyp:2cgamma} is the same as $\tau <(1-2\kappa\gamma)/(2\gamma)$ and~\eqref{hyp:qcgamma} writes as $2/q < 2\kappa\gamma$. Moreover, $\vfi^{{2q}/(q-2)} \in \RV_{{2\tau q}/(q-2)}(+\infty)$  so that Lemma~\ref{lem:bousebata_random} yields, uniformly in $y\ge y_{n,k}/2$, as $n\to+\infty$: 
			\begin{align}
				\label{eq-tmp2}
				m^{1-2/q}_{k,y}({\vfi(Y_1)^{{2q}/(q-2)}}) \sim {\left(1- \f{2\tau q \gamma }{q-2}\right)^{(2/q)-1}} \vfi^2(y) \left(\f{k-1}{n}\right)^{1-2/q} . 
			\end{align}
			It follows from~\eqref{eq-tmp1} and~\eqref{eq-tmp2}
			that there exists a constant $c\in (0,+\infty)$ independent of $n$ such that for $n$ large enough, uniformly in $y\ge y_{n,k}/2$, \begin{align}\label{eq:aux1}
				m_{k,y}(\|Z_1\|^2)&\le c \E_{k,y}\left(\|\eps_1\|^{q}\right)^{2/q} \vfi^2(y)\left(\f{k-1}{n}\right)^{1-2/q}.
			\end{align}
			Combining~\eqref{eq-new-bound} and~\eqref{eq:aux1}, one has for some constant $c>0$ independent of $n$ and for $n$ large enough,
			\begin{align*}
				\| m_{k,y}(Z_1)\|^2 \vee \ff{n}m_{k,y}(\|Z_1\|^2)  &\le
				c \E_{k,y}\left(\|\eps_1\|^{q}\right)^{2/q} \vfi^2(y)\left(\f{k-1}{n}\right)^{2(1-1/q)}.
			\end{align*}
			Going back to Markov's bound \eqref{eq:markov}, for some constant $c\in (0,+\infty)$ independent of $n$,
			\begin{align*}
				\p_{k,y}\left( {\f{\|\hat m_{Z}(Y_{n,k})\|}{m_{\vfi g(Y)}(y_{n,k})}} >\eta'\right) &\le \f{c}{\eta'^2} \E_{k,y}\left(\|\eps_1\|^{q}\right)^{2/q} \Delta_{n,k}(y),
			\end{align*}
			where 
			\begin{align}
				\label{eq-delta}
				\Delta_{n,k} (y):=  (\vfi  g  \bar{F})^{-2}(y_{n,k})\left(\f{k-1}{n}\right)^{2(1-1/q)} \vfi^2(y).
			\end{align}
			Define two sequences $y^+_{n,k}:=3y_{n,k}/2$ and $y^-_{n,k}:=y_{n,k}/2$. Since $\vfi\in\RV_\tau(+\infty)$, the uniform convergence theorem for regularly varying functions yields
			$$
			\sup_{y\in [y^-_{n,k},y^+_{n,k}]}\f{\vfi(y)}{\vfi(y_{n,k})}=\sup_{\lam\in[1/2,3/2]}\f{\vfi(\lam y_{n,k})}{\vfi(y_{n,k})}\longrightarrow \left(\ff{2}\right)^{\tau}\vee \left(\f{3}{2}\right)^{\tau}\quad\text{as }n\to+\infty.
			$$
			Hence, when $y\in [y^-_{n,k},y^+_{n,k}]$ and $n$ is large enough, $ \vfi^2(y)\le c\vfi^2(y_{n,k})$ for some constant $c>0$ independent of $n$. At last, $y_{n,k}\sim U(n/k)$ implies $\bar{F}(y_{n,k}) \sim \bar{F}(U(n/k) )=k/n$ as $n\to +\infty$ since
			asymptotic equivalences are stable under regularly-varying composition, and
			therefore, for some other constant $c>0$,
			$$
			\Delta_{n,k} (y) \leq c g^{-2}(y_{n,k})\left(\f{k}{n}\right)^{-2/q} .
			$$
			Taking account of \eqref{hyp:noise_cond}, it follows that there exists a constant $c>0$ independent of $n$ such that for $n$ large enough,
			$$
			\sup_{y\in [y^-_{n,k},y^+_{n,k}]} \big\{\E_{k,y}\left(\|\eps_1\|^{q}\right)^{2/q}\Delta_{n,k} (y)\big\}\le c \delta^2_{n,k}. 
			$$
Now, let any $\eta>0$. Since $\eta'$ is arbitrary, one may pick $\eta'$ of the form $\delta_{n,k} ( 2c/\eta)^{1/2}$ and combining with what precedes, we have shown that~\eqref{eq:markov} becomes, when $y^-_{n,k}\le y\le y^+_{n,k}$,
			\begin{align}\label{eq:markov_random}
				\p_{k,y}\left(  \delta^{-1}_{n,k} {\f{\|\hat m_{Z}(Y_{n,k})\|}{m_{\vfi g(Y)}(y_{n,k})}} >(2c/\eta)^{1/2}\right) &\le \f{\eta}{2} .
			\end{align}
			In order to derive the result, we need in fact to integrate the conditional probability~\eqref{eq:markov_random} with respect to $y$ on the whole real line, in view of Bayes' formula. After splitting the integral domain, the proof is complete if the next quantity is negligible when $n$ is large, 
			\begin{align*}
				P_{n,k,\eta}(-\infty,+\infty) &= P_{n,k,\eta}(-\infty,y^-_{n,k})+ P_{n,k,\eta}(y^-_{n,k},y^+_{n,k})+ P_{n,k,\eta}(y^+_{n,k},+\infty),
			\end{align*}
			where 
			$$
			P_{n,k,\eta}(a,b):=\int_{a}^{b}\p_{k,y}\left(  \delta^{-1}_{n,k} {\f{\|\hat m_{Z}(Y_{n,k})\|}{m_{\vfi g(Y)}(y_{n,k})}} >(2c/\eta)^{1/2}\right) f_{Y_{n,k}}(y) \dd y ,
			$$
			for any $-\infty\le a\le b \le +\infty$.
			First, note that \eqref{eq:markov_random} readily provides that $P_{n,k,\eta}(y^-_{n,k},y^+_{n,k})\le \eta/2$ for any $\eta>0$. Concerning the remaining two terms, we bound the conditional probability by one, use the complementary event and invoke the fact that $Y_{n,k}/U(n/k)\xrightarrow{\p}1$ as $n\to+\infty$ 
			to get, for $n$ large enough:
			\begin{align*}
				P_{n,k,\eta}(-\infty,y^-_{n,k}) + P_{n,k,\eta}(y^+_{n,k},+\infty)&\le  1- \p\left(   y^-_{n,k}<  Y_{n,k}< y^+_{n,k} \right)\le \f{\eta}{2}.
			\end{align*}
			By Bayes' formula, the conclusion follows as we have shown that, for any $\eta>0$ and $n$ large enough, 
			$$
			\p\left(  \delta^{-1}_{n,k} {\f{\|\hat m_{Z}(Y_{n,k})\|}{m_{\vfi g(Y)}(y_{n,k})}} >(2c/\eta)^{1/2}\right)\le \f{\eta}{2}+\f{\eta}{2}=\eta,
			$$
			which is exactly the claimed $O_\p(\delta_{n,k})$ bound.

		\end{proof}

		Finally, Lemma~\ref{lem:inner_prod_noise_L2_random} below is dedicated to the control of the (random) tail moments of $\langle  \beta,\vfi(Y) \eps \rangle$.
		
		\begin{lem}\label{lem:inner_prod_noise_L2_random}
			Let any $\beta\in H$ with $\|\beta\|=1$ and suppose \eqref{hyp:vonmises} holds. Assume that \eqref{hyp:test}, \eqref{hyp:link}, \eqref{hyp:noise_cond}, \eqref{hyp:2cgamma} and \eqref{hyp:qcgamma} hold. Let $k:=k_n\to+\infty$ be an integer deterministic sequence such that $ k / n\to 0$ and $y_{n,k}\sim U(n/k)$ as $n\to+\infty$. Let $\delta_{n,k}:=(g(y_{n,k})(k/n)^{1/q})^{-1}$. Then, 
			\begin{align*}
				\f{\hat m_{\langle  \beta,\vfi(Y) \eps \rangle}(Y_{n-k+1,n})}{ m_{\vfi  g(Y)}(y_{n,k})} &=O_{\p}\left( \delta_{n,k}\right) \xrightarrow[n\to +\infty]{\p}0.
			\end{align*}
		\end{lem}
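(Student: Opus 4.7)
The plan is to reduce this statement directly to Lemma~\ref{lem:norm_noise_v2_random} by a one-line Cauchy-Schwarz argument. The crucial observation is that, unlike the theoretical tail-moments where commuting the inner product with a Bochner expectation requires Lemma~\ref{lem:bochner_random}, the empirical tail-moment is a \emph{finite} sum, so $\langle \beta, \cdot\rangle$ commutes with it by the mere linearity of the inner product. Explicitly,
\[
\hat m_{\langle \beta, \vfi(Y)\eps\rangle}(y) \;=\; \ff{n}\sum_{i=1}^n \langle \beta, \vfi(Y_i)\eps_i\rangle \, \mathbf{1}_{\{Y_i\ge y\}} \;=\; \Big\langle \beta,\ \hat m_{\vfi(Y)\eps}(y)\Big\rangle.
\]

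Given this identity, Cauchy-Schwarz in $H$ together with the normalisation $\zp[n]{\beta}=1$ immediately yields the deterministic pointwise bound
\[
\big|\hat m_{\langle \beta, \vfi(Y)\eps\rangle}(y)\big| \;\le\; \zp[n]{\beta}\cdot \zp[n]{\hat m_{\vfi(Y)\eps}(y)} \;=\; \zp[n]{\hat m_{\vfi(Y)\eps}(y)},
\]
valid for every realisation and every threshold $y$. Specialising to the random threshold $y = Y_{n-k+1,n}$ and dividing both sides by the positive quantity $m_{\vfi\cdot g(Y)}(y_{n,k})$ (positivity and non-degeneracy for large $n$ being ensured by Lemma~\ref{lem:bousebata_random} under~\eqref{hyp:2cgamma}), one obtains
\[
\f{\big|\hat m_{\langle \beta, \vfi(Y)\eps\rangle}(Y_{n-k+1,n})\big|}{m_{\vfi\cdot g(Y)}(y_{n,k})} \;\le\; \f{\zp[n]{\hat m_{\vfi(Y)\eps}(Y_{n-k+1,n})}}{m_{\vfi\cdot g(Y)}(y_{n,k})}.
\]

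The conclusion $O_{\p}(\delta_{n,k})$ with $\delta_{n,k}\to 0$ then follows at once from Lemma~\ref{lem:norm_noise_v2_random}, since stochastic boundedness is preserved by domination. There is no real obstacle: the heavy lifting (the second-moment computation via Markov's inequality, the split of the threshold into the window $[y^-_{n,k}, y^+_{n,k}]$, and the regular-variation bookkeeping giving the rate $\delta_{n,k}$) has already been carried out for the Bochner norm in Lemma~\ref{lem:norm_noise_v2_random}, and the scalar statement is strictly weaker by Cauchy-Schwarz. The only subtlety worth flagging is that the commutation step here is elementary precisely because we are dealing with the empirical (finite-sum) object rather than its theoretical counterpart.
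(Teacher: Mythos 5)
Your proof is correct, and it takes a genuinely different --- and substantially shorter --- route than the paper's. The paper re-runs the entire conditional Markov-inequality machinery for the scalar quantity: it expands $\hat m^2_{\langle\beta,\vfi(Y)\eps\rangle}(Y_{n,k})$, splits the double sum, applies Cauchy--Schwarz termwise together with H\"older's inequality and Lemma~\ref{lem:bousebata_random} to reach the same bound $\Delta_{n,k}(y)$ as in Lemma~\ref{lem:norm_noise_v2_random}, and then repeats the integration over the window $[y^-_{n,k},y^+_{n,k}]$. You instead observe that the empirical tail-moment is a finite sum, so by bilinearity $\hat m_{\langle\beta,\vfi(Y)\eps\rangle}(y)=\langle\beta,\hat m_{\vfi(Y)\eps}(y)\rangle$ pathwise, whence Cauchy--Schwarz with $\zp[n]{\beta}=1$ gives the deterministic domination $|\hat m_{\langle\beta,\vfi(Y)\eps\rangle}(Y_{n-k+1,n})|\le\zp[n]{\hat m_{\vfi(Y)\eps}(Y_{n-k+1,n})}$, and the conclusion is an immediate corollary of Lemma~\ref{lem:norm_noise_v2_random} (whose hypotheses are identical). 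This is valid: no measure-theoretic commutation issue arises for a finite sum, the denominator $m_{\vfi\cdot g(Y)}(y_{n,k})$ is positive for large $n$ since $\vfi\cdot g$ is ultimately positive, and $O_{\p}$ is preserved under pathwise domination. What the paper's longer argument buys is only an explicit stand-alone second-moment bound \eqref{eq-numer-fin} and a proof structurally parallel to the previous lemma; your reduction loses nothing in the rate and makes the logical dependence between the two lemmas transparent.
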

		\begin{proof}
			Let $y>0$. Again, we denote $Z_1:=\vfi(Y_1)\eps_1$, $Y_{n,k}:=Y_{n-k+1,n}$, the conditional expectation given $\{Y_{n,k}=y\}$ by $\E_{k,y}(\cdot):=\E(\cdot|Y_{n,k}=y)$ and $m_{k,y}(W_i) := \E_{k,y} ( W_i 1_{\{Y_i\ge Y_{n,k}\}})$.
			The proof relies on the conditional Markov's inequality which states that 
			\begin{align}\label{eq:chebyshev}
				\p_{k,y}\left( \left| \f{\hat m_{\langle \beta,  \vfi(Y) \eps \rangle}(Y_{n,k})}{ m_{\vfi  g(Y)}(y_{n,k})} \right| >\eta'\right) &\le \frac{1}{\eta'^2}\f{\E_{k,y}\left(\hat m^2_{\langle   \beta, \vfi(Y) \eps \rangle}(Y_{n,k})\right) }{m^2_{\vfi g(Y)}(y_{n,k})},\quad \forall \eta'>0. 
			\end{align}
			The goal is to show that the RHS in \eqref{eq:chebyshev} converges to zero for large $n$.
			Let us start with a preliminary computation of the expectation. One has 
			\begin{align*}
				\hat m^2_{\langle   \beta,  \vfi(Y)\eps \rangle}(Y_{n,k})&=  \ff{n^2} \sum_{i,j=1}^n \langle  \beta ,\vfi(Y_i) \eps_{i} \rangle \langle  \beta ,\vfi(Y_j) \eps_{j} \rangle 1_{\{ Y_i\ge Y_{n,k} \}}  1_{\{ Y_j\ge Y_{n,k} \}} .
			\end{align*}
Taking the conditional expectation and applying the real-valued case of Lemma~\ref{lem:second_moment} to $z_i:=\langle\beta,\vfi(Y_i)\eps_i\rangle$, $1\le i\le n$, together with equidistribution, yield
			\begin{align}
				\nonumber
				\E_{k,y}\big(\hat m^2_{\langle   \beta,  \vfi(Y)\eps \rangle}(Y_{n,k})\big)&\le  \E_{k,y}\big(\vfi(Y_{1})\langle  \beta , \eps_{1} \rangle 1_{\{ Y_{1}\ge Y_{n,k} \}} \big)^2\\
				\label{eq-numer1}
				&+\ff{n} \E_{k,y}\big(\vfi^2(Y_{1})\langle  \beta , \eps_{1} \rangle^2 1_{\{ Y_{1}\ge Y_{n,k} \}}  \big) .
			\end{align}
			On the one hand,  Cauchy-Schwarz's inequality in $H$ with $\| \beta \| = 1$ as well as  the bound~\eqref{eq:aux1} in the proof of Lemma~\ref{lem:norm_noise}, which is allowed since \eqref{hyp:noise_cond}, \eqref{hyp:2cgamma} and \eqref{hyp:qcgamma} hold, entail that there exists a constant $c\in (0,+\infty)$ independent of $n$ such that for $n$ large enough, uniformly in $y\ge y_{n,k}/2$, 
			\begin{align} \nonumber
				\E_{k,y} \big(\vfi^2(Y_{1})\langle  \beta , \eps_{1} \rangle^2 1_{\{ Y_{1}\ge Y_{n,k} \}} \big) &\le  m_{k,y}(\vfi^2(Y)\|\eps\|^2)\\
				\label{eq-numer2}
				&\le c \E_{k,y}\left(\|\eps\|^{q}\right)^{2/q} \vfi^2(y)\left(\f{k-1}{n}\right)^{1-2/q}.
			\end{align}
			On the other hand, recalling that $Z_1:=\vfi(Y_1)\eps_1$, one may apply Cauchy-Schwarz's inequality in $H$ to obtain
			\begin{align}
				\label{eq-tmp-a}
				\E_{k,y}\big(\vfi(Y_{1})\langle  \beta , \eps_{1} \rangle 1_{\{ Y_{1}\ge Y_{n,k} \}} \big)^2&\le   \E_{k,y}\big(\vfi(Y_{1})\|\eps_{1} \|1_{\{ Y_{1}\ge Y_{n,k} \}} \big)^2 = m^2_{k,y}(\| Z_1\|).
			\end{align}
			The focus now turns to the conditional tail-moment of $\|Z_1\|$ in the previous inequality. The $q$-moment of $\|\eps\|$ is finite under~\eqref{hyp:noise_cond} so that Hölder's inequality allows us to write,
			\begin{align}
				\label{eq-tmp-b}
				m^2_{k,y}(\|Z_1\|)&\le\E_{k,y}\left(\|\eps_1\|^{q}\right)^{2/q}   m^{2(1-1/q)}_{k,y}({\vfi^{{q}/(q-1)}(Y_1)}).
			\end{align}
			Lemma \ref{lem:bousebata_random} applies since ${\tau q}/(q-1) < 1/\gamma$ whenever $\tau\le 0$. This very condition is also satisfied  when $\tau>0$ under~\eqref{hyp:2cgamma} and~\eqref{hyp:qcgamma}. Indeed, the condition at hand is equivalent to $\tau<(1-1/q)/\gamma$ while~\eqref{hyp:2cgamma} gives $\tau < (1/2 -\kappa\gamma)/{\gamma}< (1 -\kappa\gamma)/{\gamma}$ and~\eqref{hyp:qcgamma} writes as $1/q<\kappa\gamma$ which concludes. Finally, it yields, for some constant $c>0$ independent of $n$, for $n$ large enough, uniformly in $y\ge y_{n,k}/2$,
			\begin{align}
				\label{eq-tmp-c}
				m^{2(1-1/q)}_{k,y}({\vfi^{{q}/(q-1)}(Y_1)})&\le c\vfi^2(y) \left(\f{k-1}{n}\right)^{2(1-1/q)} .
			\end{align}
			Collecting \eqref{eq-tmp-a}, \eqref{eq-tmp-b} and \eqref{eq-tmp-c} entails
			\begin{align}
				\label{eq-numer3}
				\E_{k,y}\big(\vfi(Y_{1})\langle  \beta , \eps_{1} \rangle 1_{\{ Y_{1}\ge Y_{n,k} \}} \big)^2&\le c \E_{k,y}\left(\|\eps_1\|^{q}\right)^{2/q}  {{\vfi}^{2}(y) }  \left(\f{k-1}{n}\right)^{2(1-1/q)},
			\end{align}
			for $n$ large enough, where $c$ is another constant independent of $n$.
			It readily follows from~\eqref{eq-numer1}, \eqref{eq-numer2} and~\eqref{eq-numer3} that
			the numerator of the RHS in~\eqref{eq:chebyshev} can be upper bounded as
			\begin{align}
				\label{eq-numer-fin}
				\E_{k,y}\big(\hat m^2_{\langle   \beta,  \vfi(Y)\eps \rangle}(Y_{n,k})\big) \leq  c \E_{k,y}\left(\|\eps_1\|^{q}\right)^{2/q}  {{\vfi}^{2}(y) }  \left(\f{k-1}{n}\right)^{2(1-1/q)}.
			\end{align}
			Besides, the denominator of the RHS in~\eqref{eq:chebyshev} is controlled through \citet[Lemma~2]{Bousebata2023}, for some constant $c>0$ independent of $n$ and for $n\to+\infty$,
			\begin{align}
				\label{eq-denom-fin}
				m^2_{\vfi g(Y)}(y_{n,k}) \sim c \vfi^2(y_{n,k}) g^2(y_{n,k}) \bar{F}^2(y_{n,k}) .
			\end{align}
			Thus, \eqref{eq-numer-fin} altogether with \eqref{eq-denom-fin} give, for some constant $c>0$ independent of $n$, for $n$ large enough,
			\begin{align*}
				\f{\E_{k,y}\left(\hat m^2_{\langle   \beta, \vfi(Y) \eps \rangle}(Y_{n,k})\right) }{m^2_{\vfi g(Y)}(y_{n,k})}&\le  c (\vfi g \bar{F})^{-2}(y_{n,k})\E_{k,y}\left(\|\eps\|^{q}\right)^{2/q} \vfi^2(y)\left(\f{k-1}{n}\right)^{2(1-1/q)}.
			\end{align*}
			Overall, the Markov's inequality \eqref{eq:chebyshev} becomes, for some constant $c\in (0,+\infty)$ independent of $n$, for any $n$ large enough and any $\eta'>0$,
			\begin{align*}
				\p_{k,y}\left( \left| \f{\hat m_{\langle \beta,  \vfi(Y) \eps \rangle}(Y_{n,k})}{ m_{\vfi  g(Y)}(y_{n,k})} \right| >\eta'\right) &\le \f{c}{\eta'^2} \E_{k,y}\left(\|\eps\|^{q}\right)^{2/q} 
				\Delta_{n,k}(y),
			\end{align*}
			where $\Delta_{n,k}(y)$ is defined by~\eqref{eq-delta} in the proof of Lemma~\ref{lem:norm_noise}.
			
			The remaining part of the proof is a mere repetition of the same steps as in the proof of Lemma~\ref{lem:norm_noise}.
		\end{proof}

		\subsection{Consistency}\label{subsection:consistency_random}
		
		Theorem~\ref{th:main} follows from the upcoming Proposition~\ref{prop:1}, which is proved based on Lemma~\ref{lem:norm_noise} and Lemma~\ref{lem:inner_prod_noise_L2_random}, combined with Proposition~\ref{prop:2}: the triangle inequality entails
		\begin{align*}
		\|\tilde\beta_\vfi-\beta\|&\le\|\tilde\beta_\vfi-\hat\beta_\vfi\|+\|\hat\beta_\vfi-\beta\|\\
		&=O_\p\big(N^{-\al_b}\vee\{N^{-\al_e}\delta_{n,k}\}\big)+O_\p(\delta_{n,k})=O_\p\big(\delta_{n,k}\vee N^{-\al_b}\big),
		\end{align*}
		all quantities being evaluated at $Y_{n-k+1,n}$, where the last equality uses $N^{-\al_e}\delta_{n,k}\le\delta_{n,k}$.
		
		\begin{Prop}\label{prop:1}
Assume \eqref{eq:single_index_model}, \eqref{hyp:test}, \eqref{hyp:2rv}, \eqref{hyp:link}, \eqref{hyp:vonmises}, \eqref{hyp:noise_cond}, \eqref{hyp:2cgamma} and \eqref{hyp:qcgamma} hold, with $\sqrt{k}A(n/k)=O(1)$. Assume $\vfi$ and $g$ are continuously differentiable in a neighbourhood of infinity such that $t(\vfi g)'(t)/(\vfi g)(t) \to \tau+\kappa$ as $t\to+\infty$.
			Let $k:=k_n\to+\infty$ be an intermediate sequence and let $\delta_{n,k}:= (g(y_{n,k})(k/n)^{1/q})^{-1}$ where $y_{n,k}\sim U(n/k)$ as $n\to +\infty$.

			$$ 
			\| \hat \beta_{\vfi} (Y_{n-k+1,n})- \beta \| = O_{\p}( \delta_{n,k} )\xrightarrow[n\to +\infty]{\p}0  .$$
		\end{Prop}
		\begin{proof}[Proof of Proposition \ref{prop:1}] 
			Denote $Y_{n,k}:=Y_{n-k+1,n}$. Under \eqref{eq:single_index_model}, the estimator $\hat m_{X\vfi(Y)}$ may be expressed as
			\begin{align*}  \hat m_{X\vfi(Y)}(Y_{n,k}) &= \hat m_{\vfi  g(Y_1)}(Y_{n,k})  \beta+\hat m_{ \vfi(Y_1) \eps_1}(Y_{n,k}).
			\end{align*}
			So that, since $\|  \beta\|=1$, one may write the inner product between $  \beta$ and $ \hat m_{X\vfi(Y)}(Y_{n,k})$ as
			\begin{align*}
				\langle \hat m_{X\vfi(Y)}(Y_{n,k}) , \beta\rangle &=\hat m_{\vfi g(Y_1)}(Y_{n,k}) + \hat m_{\langle \vfi(Y_1) \eps_1, \beta \rangle}(Y_{n,k}),
			\end{align*} and
			\begin{align*}
				\|\hat m_{X\vfi(Y)}(Y_{n,k})\|^2 &=\hat m_{\vfi  g(Y_1)}^2(Y_{n,k})+\|\hat m_{\vfi(Y_1) \eps_1}(Y_{n,k})\|^2\\
				&\quad+2\hat m_{\vfi  g(Y_1)}(Y_{n,k}) \hat m_{\langle  \vfi(Y_1) \eps_1, \beta \rangle}(Y_{n,k}) .
			\end{align*}
			It follows that
			\begin{align*}
				1-\f{\langle  \hat m_{X\vfi(Y)}(Y_{n,k}) ,  \beta\rangle^2}{\| \hat m_{X\vfi(Y)}(Y_{n,k})\|^2}&= \f{\|\hat m_{  \vfi(Y_1) \eps_1  }(Y_{n,k})\|^2-\hat m_{\langle   \vfi(Y_1) \eps_1,  \beta \rangle}^2(Y_{n,k})}{\hat m_{\vfi g(Y_1)}^2(Y_{n,k}) +2\hat m_{\vfi g(Y_1)}(Y_{n,k}) \hat m_{\langle \vfi(Y_1)  \eps_1,  \beta \rangle}(Y_{n,k})+\|\hat m_{  \vfi(Y_1)  \eps_1  }(Y_{n,k})\|^2} , 
			\end{align*}
			which equals
			$$\f{ \left( {\|\hat m_{  \vfi(Y_1) \eps_1}(Y_{n,k})\|}/{\hat m_{\vfi g(Y_1)}(Y_{n,k})}\right)^2 -\left({\hat m_{\langle  \vfi(Y_1) \eps_1,  \beta \rangle}(Y_{n,k})}/{\hat m_{\vfi g(Y_1)}(Y_{n,k})} \right)^2}{1+2 \left({\hat m_{\langle \vfi(Y_1)  \eps_1,  \beta \rangle}(Y_{n,k})}/{\hat m_{\vfi g(Y_1)}(Y_{n,k})} \right)+\left( {\|\hat m_{ \vfi(Y_1) \eps_1}(Y_{n,k})\|}/{\hat m_{\vfi g(Y_1)}(Y_{n,k})}\right)^2 }.
			$$
			We have everything needed to apply \citet[Theorem~2]{Stupfler2019_Random_threshold} to the function $\vfi g$. Indeed, $t(\vfi g)'(t)/(\vfi g)(t)\to a:=\tau+\kappa$ as $t\to+\infty$ with $a>0$ in view of \eqref{hyp:2cgamma}, so that $\vfi g$ is ultimately increasing and $(\vfi g)'\in\RV_{a-1}(+\infty)$; the second-order condition on $U$ is \eqref{hyp:2rv}; the moment condition $0<2a\gamma<1$ is exactly \eqref{hyp:2cgamma}; and the bias condition $\sqrt{k}A(n/k)=O(1)$ is assumed. The required conditions for Lemma~\ref{lem:norm_noise} and Lemma~\ref{lem:inner_prod_noise_L2_random} are also fulfilled. Therefore, their combination together with \citet[Lemma~2]{Bousebata2023} and the well-known fact that $\bar{F}(y_{n,k}) \sim \bar{F}(U(n/k) )=k/n$ since $y_{n,k}\sim U(n/k)$, allows us to chain asymptotic equivalences in probability:
			\begin{align}\label{eq:stupfler}
				\| \hat m_{X\vfi(Y)}(Y_{n,k}) \|^2 =\hat m^2_{  \vfi g(Y_1) }(Y_{n,k}) (1+o_\p(1))&= m^2_{ \vfi g(Y) }(y_{n,k}) (1+o_\p(1)).
			\end{align}
			Whence after a first order Taylor expansion, $1-{\langle  \hat m_{X\vfi(Y)}(Y_{n,k}) ,  \beta\rangle^2}/{\| \hat m_{X\vfi(Y)}(Y_{n,k})\|^2}$ is asymptotically equivalent, with respect to the convergence in probability, to \begin{align*}
				\f{ \left( {\|\hat m_{  \vfi(Y_1) \eps_1}(Y_{n,k})\|}/{ m_{\vfi g(Y)}(y_{n,k})}\right)^2 -\left({\hat m_{\langle  \vfi(Y_1) \eps_1,  \beta \rangle}(Y_{n,k})}/{ m_{\vfi g(Y)}(y_{n,k})} \right)^2}{1+2 \left({\hat m_{\langle \vfi(Y_1)  \eps_1,  \beta \rangle}(Y_{n,k})}/{ m_{\vfi g(Y)}(y_{n,k})} \right)+\left( {\|\hat m_{ \vfi(Y_1) \eps_1}(Y_{n,k})\|}/{m_{\vfi g(Y)}(y_{n,k})}\right)^2 }.
			\end{align*}
			Again, Lemma~\ref{lem:norm_noise} and Lemma~\ref{lem:inner_prod_noise_L2_random}, together with a Taylor expansion, yield $$ 1-\f{\langle  \hat m_{X\vfi(Y)}(Y_{n,k}) ,  \beta\rangle^2}{\| \hat m_{X\vfi(Y)}(Y_{n,k})\|^2}=O_{\p}( \delta_{n,k}^2 ) \xrightarrow[n\to +\infty]{\p}0 .$$
			Finally, note that  $\| \hat \beta_{\vfi} (Y_{n,k})- \beta \|^2=2(1-{\langle  \hat m_{X\vfi(Y)}(Y_{n,k}) ,  \beta\rangle}/{\| \hat m_{X\vfi(Y)}(Y_{n,k})\|}) $ to conclude the proof with the classic identity $1-\lam^2 = (1+\lam)(1-\lam)$, where $\lam:={\langle  \hat m_{X\vfi(Y)}(Y_{n,k}) ,  \beta\rangle}/{\| \hat m_{X\vfi(Y)}(Y_{n,k})\|}$. Indeed, $\langle \hat m_{X\vfi(Y)}(Y_{n,k}),\beta\rangle=\hat m_{\vfi g(Y_1)}(Y_{n,k})(1+o_\p(1))$ with $\hat m_{\vfi g(Y_1)}\ge0$, so that $1+\lam\ge1$ with probability tending to one and $0\le1-\lam\le1-\lam^2=O_\p(\delta^2_{n,k})$.
		\end{proof}

		\noindent The next proposition controls the effect of the grid-based reconstruction on the FEPLS estimator; its first assertion replaces the former intermediate result on $\|\hat m_{\tilde X\vfi(Y)}\|$.
		\begin{Prop}\label{prop:2}
			Assume \eqref{eq:single_index_model} holds with $\beta\in\mathcal{C}^{0,\al_b}([0,1],\R)$ for some $\al_b\in(0,1]$, together with \eqref{hyp:vonmises}, \eqref{hyp:test}, \eqref{hyp:link}, \eqref{hyp:noise_cond}, \eqref{hyp:2cgamma} and \eqref{hyp:qcgamma}. Suppose the grid satisfies \eqref{hyp:grid} and that $\vfi$ and $g$ are continuously differentiable in a neighbourhood of infinity such that $t(\vfi g)'(t)/(\vfi g)(t)\to\tau+\kappa$ as $t\to+\infty$. Let $k:=k_n\to+\infty$ be an intermediate sequence such that $\sqrt{k}A(n/k)=O(1)$ and let $y_{n,k}\sim U(n/k)$ as $n\to+\infty$. Then, for any deterministic sequence $N:=N_n\to+\infty$, it holds that:
			\begin{align}
				\label{eq:prop2-norm}
				\big\|\hat m_{\tilde X\vfi(Y)}(Y_{n-k+1,n})\big\| &= m_{\vfi g(Y)}(y_{n,k})\,\big(1+o_\p(1)\big),\\
				\label{eq:prop2-dir}
				\big\|\tilde\beta_\vfi(Y_{n-k+1,n})-\hat\beta_\vfi(Y_{n-k+1,n})\big\| &= O_\p\Big( N^{-\al_b}\vee \big\{N^{-\al_e}\,\delta_{n,k}\big\}\Big) ,
			\end{align}
			where $\delta_{n,k}:=(g(y_{n,k})(k/n)^{1/q})^{-1}$.
		\end{Prop}
		\begin{proof}
			Denote $Y_{n,k}:=Y_{n-k+1,n}$, $\hat m:=\hat m_{X\vfi(Y)}(Y_{n,k})$, $\tilde m:=\hat m_{\tilde X\vfi(Y)}(Y_{n,k})$ and $m:=m_{\vfi g(Y)}(y_{n,k})$, and recall from the proof of Lemma~\ref{lem:norm_noise} the two sequences $y^+_{n,k}:=3y_{n,k}/2$ and $y^-_{n,k}:=y_{n,k}/2$.
			
			We start by controlling the pathwise discretisation error. Let $\al\in(0,1]$ and $h\in\mathcal{C}^{0,\al}([0,1],\R)$, and recall from Section~\ref{sec-inf} that $\|h\|_{0,\al}$ denotes the H\"older norm of $h$ on $[0,1]$; in particular $|h(s)-h(t)|\le \|h\|_{0,\al}|s-t|^{\al}$ for any $s,t\in[0,1]$. Since $\max_{1\le j \le N}|I_j|\le c_0/N$ by \eqref{hyp:grid} and $t_j\in I_j$, one has
			\begin{align}\label{eq:interp}
				\Big\| h-\sum_{j=1}^N h(t_j)1_{I_j}\Big\|^2=\sum_{j=1}^N\int_{I_j}\big(h(t)-h(t_j)\big)^2\dd t\le \|h\|_{0,\al}^2\,\Big(\f{c_0}{N}\Big)^{2\al}\sum_{j=1}^N|I_j| = \|h\|_{0,\al}^2\,\Big(\f{c_0}{N}\Big)^{2\al}.
			\end{align}
			Under \eqref{eq:single_index_model}, $\tilde X_i-X_i=g(Y_i)\big(\sum_{j=1}^N\beta(t_j)1_{I_j}-\beta\big)+\big(\sum_{j=1}^N\eps_i(t_j)1_{I_j}-\eps_i\big)$, so that \eqref{eq:interp}, applied to $\beta$ and to $\eps_i$, yields almost surely, for any $1\le i\le n$,
			\begin{align}\label{eq:prop2-step1}
				\|\tilde X_i-X_i\|\le c_0^{\al_b}\,\|\beta\|_{0,\al_b}\,g(Y_i)\,N^{-\al_b}+c_0^{\al_e}\,\|\eps_i\|_{0,\al_e}\,N^{-\al_e} .
			\end{align}
			Since $\vfi\ge0$ and $g\ge0$, the triangle inequality and \eqref{eq:prop2-step1} entail
			\begin{align}\nonumber
				\|\tilde m-\hat m\| &= \Big\|\ff{n}\sum_{i=1}^n(\tilde X_i-X_i)\,\vfi(Y_i)1_{\{Y_i\ge Y_{n,k}\}}\Big\|\le \hat m_{\|\tilde X-X\|\vfi(Y)}(Y_{n,k})\\
				\label{eq:prop2-step2}
				&\le c\,N^{-\al_b}\,\hat m_{\vfi  g(Y)}(Y_{n,k})+c\,N^{-\al_e}\,\hat m_{\vfi(Y)\|\eps\|_{0,\al_e}}(Y_{n,k}),
			\end{align}
			for some constant $c>0$ depending only on $c_0$, $\al_b$, $\al_e$ and $\|\beta\|_{0,\al_b}$.
			
			Concerning the link-function term, all the assumptions of Proposition~\ref{prop:1} are in force, so that \eqref{eq:stupfler} in its proof gives both $\hat m_{\vfi g(Y)}(Y_{n,k})=m\,(1+o_\p(1))$ and $\|\hat m\|=m\,(1+o_\p(1))$.
			
			Let us now turn to the noise term $S:=\hat m_{\vfi(Y)\|\eps\|_{0,\al_e}}(Y_{n,k})$. By equidistribution and the conditional H\"older inequality with conjugate exponents $q/(q-1)$ and $q$,
			$$
			\E_{k,y}(S)=m_{k,y}\big(\vfi(Y_1)\|\eps_1\|_{0,\al_e}\big)\le m_{k,y}\big(\vfi^{q/(q-1)}(Y_1)\big)^{(q-1)/q}\,\E_{k,y}\big(\|\eps_1\|_{0,\al_e}^{q}\big)^{1/q} .
			$$
			Note that $\tau\gamma\, q/(q-1)<1$: this is immediate when $\tau\le 0$ while, when $\tau>0$, \eqref{hyp:2cgamma} and \eqref{hyp:qcgamma} give $\tau\gamma<1/2-\kappa\gamma<1/2-1/q<(q-1)/q$. Since $\vfi^{q/(q-1)}\in\RV_{\tau q/(q-1)}(+\infty)$, Lemma~\ref{lem:bousebata_random} thus provides a constant $c'>0$ independent of $n$ such that $m_{k,y}(\vfi^{q/(q-1)}(Y_1))\le c'\, \vfi^{q/(q-1)}(y)\, k/n$ for all $y$ large enough, uniformly in $n$ and $k$. Besides, \eqref{hyp:noise_cond} provides a constant $C>0$ independent of $n$ such that, for $n$ large enough, $\sup_{y\geq 0}\E_{k,y}(\|\eps_1\|^q_{0,\al_e})\le C$. Collecting these bounds, for $n$ large enough,
			$$
			\sup_{y\in[y^-_{n,k},y^+_{n,k}]}\f{\E_{k,y}(S)}{\vfi(y)}\le c''\,(k/n)^{1-1/q},
			$$
			for some constant $c''>0$ independent of $n$. In addition, \citet[Lemma~2]{Bousebata2023} yields $m\ge c'''\,(\vfi g\bar{F})(y_{n,k})$ for $n$ large enough, since $(\tau+\kappa)\gamma<1$ by \eqref{hyp:2cgamma}. Now, let $M>0$. Splitting according to the localisation of $Y_{n,k}$ in $[y^-_{n,k},y^+_{n,k}]$ and applying the conditional Markov inequality, exactly as in the proof of Lemma~\ref{lem:norm_noise}, entail
			$$
			\p\big( S> M\,\delta_{n,k}\,m\big)\le \p\big(Y_{n,k}\notin[y^-_{n,k},y^+_{n,k}]\big)+\sup_{y\in[y^-_{n,k},y^+_{n,k}]}\f{\E_{k,y}(S)}{M\,\delta_{n,k}\,m} .
			$$
			For $n$ large enough, $\vfi\in\RV_{\tau}(+\infty)$ gives $\sup_{y\in[y^-_{n,k},y^+_{n,k}]}\vfi(y)\le c''''\,\vfi(y_{n,k})$, while $\bar{F}(y_{n,k})\sim k/n$. Hence, for $n$ large enough,
			\begin{align*}
			\sup_{y\in[y^-_{n,k},y^+_{n,k}]}\f{\E_{k,y}(S)}{M\,\delta_{n,k}\,m}&\le \f{c''c''''\,\vfi(y_{n,k})\,(k/n)^{1-1/q}}{M\,\delta_{n,k}\,c'''\,(\vfi g\bar{F})(y_{n,k})}\\
			&\le \f{c''c''''}{c'''}\cdot\f{2}{M\,\delta_{n,k}\,g(y_{n,k})\,(k/n)^{1/q}}= \f{2c''c''''}{c'''\,M},
			\end{align*}
			since $\delta_{n,k}\,g(y_{n,k})\,(k/n)^{1/q}=1$ by definition of $\delta_{n,k}$. As $\p(Y_{n,k}\notin[y^-_{n,k},y^+_{n,k}])\to 0$ when $n\to+\infty$, letting $n\to+\infty$ and then $M\to+\infty$ shows that
			\begin{align}\label{eq:prop2-noise}
				S/m=O_\p(\delta_{n,k}) .
			\end{align}
			
			It remains to conclude. Collecting \eqref{eq:prop2-step2}, the link-function term and \eqref{eq:prop2-noise},
			\begin{align}\label{eq:prop2-RN}
				\f{\|\tilde m-\hat m\|}{m}=O_\p\big(N^{-\al_b}\big)+N^{-\al_e}\,O_\p\big(\delta_{n,k}\big)=o_\p(1),
			\end{align}
			since $\delta_{n,k}\to 0$, see Proposition~\ref{prop:1}. The triangle inequality $\big|\,\|\tilde m\|-\|\hat m\|\,\big|\le\|\tilde m-\hat m\|=o_\p(m)$ combined with $\|\hat m\|=m(1+o_\p(1))$ proves \eqref{eq:prop2-norm}. Next, on the event $\{\|\tilde m-\hat m\|\le\|\hat m\|/2\}$, whose probability tends to one by \eqref{eq:prop2-RN}, one has $\tilde m\ne0$ and, for any $u,w\in H\setminus\{0\}$,
			$$
			\Big\|\f{u}{\|u\|}-\f{w}{\|w\|}\Big\|\le\f{\|u-w\|}{\|u\|}+\|w\|\,\Big|\ff{\|u\|}-\ff{\|w\|}\Big|\le\f{2\,\|u-w\|}{\|u\|},
			$$
			where the last inequality stems from $|\,\|u\|-\|w\|\,|\le\|u-w\|$. Applying this bound with $u=\hat m$ and $w=\tilde m$ yields
			\begin{align*}
			\big\|\tilde\beta_\vfi(Y_{n,k})-\hat\beta_\vfi(Y_{n,k})\big\|&\le\f{2\,\|\tilde m-\hat m\|}{\|\hat m\|}=O_\p\Big(N^{-\al_b}\vee\big\{N^{-\al_e}\,\delta_{n,k}\big\}\Big),
			\end{align*}
			which proves \eqref{eq:prop2-dir} and ends the proof.
		\end{proof}
		\begin{Remark}\label{rmk:prop2-rate}
			In \eqref{eq:prop2-dir}, the noise smoothness $\al_e$ only appears multiplied by $\delta_{n,k}$: the corresponding term is thus always asymptotically negligible with respect to $\delta_{n,k}\vee N^{-\al_b}$, the rate appearing in Theorem~\ref{th:main} below. In particular, no condition relating the grid size $N$ to $n$ or $k$ is required.
		\end{Remark}

		\begin{proof}[Proof of Proposition~\ref{prop:modelfree}]
	For clarity, denote $Y_{n,k}:=Y_{n-k+1,n}$, $y_{n,k}:=U(n/k)$ and $m_{s^2(Y)}(y):=\E(s^2(Y)1_{\{Y\ge y\}})$.
	Let us bound everything relatively to $m_{s(Y)}(y_{n,k})$ and consider the empirical threshold at the end. Since
	$\E_{\mathcal{F},\mathcal{B}}(\xi)=0$, one has $m_W=m_{\mu(Y)}$ and the following expansion holds:
	$$
	\hat m_W(Y_{n,k})-m_W(y_{n,k})=A_{1,n}+A_{2,n}+A_{3,n},
	$$
	where $A_{1,n}:=\hat m_{\mu(Y)}(Y_{n,k})-\hat m_{\mu(Y)}(y_{n,k})$, $A_{2,n}:=\hat m_{\mu(Y)}(y_{n,k})-m_W(y_{n,k})$ and
	$A_{3,n}:=\hat m_{\xi}(Y_{n,k})$, with $\hat m_{\mu(Y)}(Y_{n,k})=\ff{n}\sum_{i=1}^n\mu(Y_i)1_{\{Y_i\ge Y_{n,k}\}}$.

	We start by computing the deterministic tail-moments. Since $s\in\RV_\lambda(+\infty)$ with $\lambda\gamma<1$,
	which follows from $2\lambda\gamma<1$, the integrand $sf\in\RV_{\lambda-1/\gamma-1}(+\infty)$ has a regular variation index below
	$-1$, so that Karamata's theorem together with $yf(y)\sim\gamma^{-1}\bar{F}(y)$ yield
	$m_{s(Y)}(y)\sim s(y)\bar{F}(y)/(1-\lambda\gamma)$. Likewise, $m_{s^2(Y)}(y)\sim s^2(y)\bar{F}(y)/(1-2\lambda\gamma)$
	since $2\lambda\gamma<1$. Using $\bar{F}(y_{n,k})\sim k/n$, it follows that
	\begin{align}\label{eq:mf_var}
		\f{m_{s^2(Y)}(y_{n,k})}{n\,m^2_{s(Y)}(y_{n,k})}\sim\f{(1-\lambda\gamma)^2}{1-2\lambda\gamma}\cdot\ff{k}.
	\end{align}
	Besides, the triangle inequality for the Bochner integral gives $\|m_W(y)\|\le m_{s(Y)}(y)$ for every $y$,
	hence the announced bound at $y=Y_{n,k}$.

	The focus now turns to the term $A_{2,n}$, the fluctuation at the deterministic threshold. The variables
	$Z_i:=\mu(Y_i)1_{\{Y_i\ge y_{n,k}\}}$ are i.i.d. in $H$ with $\E_{\mathcal{B}}(Z_1)=m_W(y_{n,k})$ and
	$\|Z_1\|=s(Y_1)1_{\{Y_1\ge y_{n,k}\}}$, so that $A_{2,n}=\ff{n}\sum_{i=1}^n(Z_i-\E_{\mathcal{B}}(Z_i))$. By the
	orthogonality of centred i.i.d. summands in $H$, which follows from Lemma~\ref{lem:bochner_random}, and
	by~\eqref{eq:mf_var}, $$
	\E(\|A_{2,n}\|^2)=\ff{n}\E(\|Z_1-\E_{\mathcal{B}}(Z_1)\|^2)\le\ff{n}\E(\|Z_1\|^2)=\ff{n}m_{s^2(Y)}(y_{n,k}).
	$$
	In regime~(i), Markov's inequality and the deterministic denominator give
	\begin{align}\label{eq:mf_det}
		\f{\|A_{2,n}\|}{m_{s(Y)}(y_{n,k})}=O_\p(k^{-1/2}).
	\end{align}
	In regime~(ii), we turn this into an almost-sure statement. The main tool is de Acosta's inequality
	\citep[Theorem~2.1, case $p>2$]{deacosta1981}. 
	Since $2\lambda\gamma<1$, pick $p\in(2,1/(\lambda\gamma))$; then
	$s^pf\in\RV_{p\lambda-1/\gamma-1}(+\infty)$ has index below $-1$, so that $m_{s^p(Y)}(y)\sim C_p\,s^p(y)\bar{F}(y)$
	is finite for $n$ large enough. The summands $Z_i-\E_{\mathcal{B}}(Z_i)$ are independent in $H$ with
	$$
	\E(\|Z_i-\E_{\mathcal{B}}(Z_i)\|^p)\le2^p\E(\|Z_1\|^p)=2^p m_{s^p(Y)}(y_{n,k}),
	$$
	so de Acosta's inequality applied
	to $nA_{2,n}=\sum_{i=1}^n(Z_i-\E_{\mathcal{B}}(Z_i))$ gives
	$$
	\E\big(\big|\,\|nA_{2,n}\|-\E(\|nA_{2,n}\|)\,\big|^p\big)\le C_p\Big[\big(n\,m_{s^2(Y)}(y_{n,k})\big)^{p/2}+n\,m_{s^p(Y)}(y_{n,k})\Big],
	$$
	the first bracket dominating since $p/2\ge1$. Dividing by $n^p$, it follows that
	$$
	\E\big(\big|\,\|A_{2,n}\|-\E(\|A_{2,n}\|)\,\big|^p\big)\le C_p\,m^p_{s(Y)}(y_{n,k})\,k^{-p/2}(1+o(1)).
	$$
	Besides,
	$$\E(\|A_{2,n}\|)\le\E(\|A_{2,n}\|^2)^{1/2}=O(k^{-1/2})\,m_{s(Y)}(y_{n,k}),$$ in virtue of~\eqref{eq:mf_var}, so that, for any
	$\eta'>0$ and $n$ large enough, $\{\|A_{2,n}\|>\eta'm_{s(Y)}(y_{n,k})\}\subseteq\{\,|\,\|A_{2,n}\|-\E(\|A_{2,n}\|)\,|>(\eta'/2)\,m_{s(Y)}(y_{n,k})\,\}$.
	Markov's inequality then gives $$\p(\|A_{2,n}\|/m_{s(Y)}(y_{n,k})>\eta')\le C_p\,(2/\eta')^p\,k_n^{-p/2}(1+o(1)).$$ Picking
	$p$ close enough to $1/(\lambda\gamma)$ that $2/p<\theta$, this is summable since $k_n\ge n^\theta$, whence
	Borel--Cantelli Lemma yields $\|A_{2,n}\|/m_{s(Y)}(y_{n,k})\to0$ almost surely.

	The next step is to bound the term $A_{1,n}$. The indicators $1_{\{Y_i\ge Y_{n,k}\}}$ and $1_{\{Y_i\ge y_{n,k}\}}$
	are nested, so that their difference has the same sign for every $i$. Since $\|\mu(Y_i)\|=s(Y_i)\ge0$, the
	triangle inequality in $H$ gives
	$$
	\|A_{1,n}\|\le\ff{n}\sum_{i=1}^n s(Y_i)\big|1_{\{Y_i\ge Y_{n,k}\}}-1_{\{Y_i\ge y_{n,k}\}}\big|
	=\big|\hat m_{s(Y)}(Y_{n,k})-\hat m_{s(Y)}(y_{n,k})\big|.
	$$
	In regime~(i), $s$ is continuously differentiable near infinity with $s'\in\RV_{\lambda-1}(+\infty)$, and
	$2\lambda\gamma<1$ is exactly the index condition of \citet[Theorem~2]{Stupfler2019_Random_threshold} with
	$a=\lambda$, as in the proof of Proposition~\ref{prop:1}. Thus, it yields
	$\hat m_{s(Y)}(Y_{n,k})=m_{s(Y)}(y_{n,k})(1+O_\p(k^{-1/2}))$. Besides, the scalar version of~\eqref{eq:mf_det},
	with $s$ in place of $\mu$, gives $\hat m_{s(Y)}(y_{n,k})=m_{s(Y)}(y_{n,k})(1+O_\p(k^{-1/2}))$. Subtracting, it
	follows that $\|A_{1,n}\|/m_{s(Y)}(y_{n,k})=O_\p(k^{-1/2})$.

	In regime~(ii), we localise the random threshold. Fix $\vartheta\in(0,1)$ and set
	$y^\pm_{n,k}:=(1\pm\vartheta)y_{n,k}$. For $y>0$, let $N_n(y):=\#\{i:Y_i\ge y\}$, which is binomial with parameters
	$n$ and $\bar F(y)$. Since
	$\bar F\in\RV_{-1/\gamma}(+\infty)$ and $\bar F(y_{n,k})\sim k/n$, one has
	$\E(N_n(y^\pm_{n,k}))=n\bar F(y^\pm_{n,k})=\mu^\pm_n$, with $\mu^\pm_n\sim\rho_\pm k$ and
	$\rho_\pm:=(1\pm\vartheta)^{-1/\gamma}$, so that $\rho_+<1<\rho_-$. Set $\delta_+:=\rho_+^{-1}-1>0$ and
	$\delta_-:=1-\rho_-^{-1}\in(0,1)$. Since $\mu^+_n\sim\rho_+k$ with $\rho_+<1$, one has $k\ge(1+\delta_+/2)\mu^+_n$
	for $n$ large enough; likewise $\mu^-_n\sim\rho_-k$ with $\rho_->1$ gives $k\le(1-\delta_-/2)\mu^-_n$ for $n$
	large enough. Together with the standard identity $\{Y_{n,k}\ge y\}=\{N_n(y)\ge k\}$, the Chernoff bound for the binomial
	distribution gives, for $n$ large enough,
	$$
	\p(Y_{n,k}\ge y^+_{n,k})=\p\big(N_n(y^+_{n,k})\ge k\big)\le\p\big(N_n(y^+_{n,k})\ge(1+\tfrac{\delta_+}{2})\mu^+_n\big)\le\exp(-c_+\mu^+_n),
	$$
	$$
	\p(Y_{n,k}<y^-_{n,k})=\p\big(N_n(y^-_{n,k})<k\big)\le\p\big(N_n(y^-_{n,k})\le(1-\tfrac{\delta_-}{2})\mu^-_n\big)\le\exp(-c_-\mu^-_n),
	$$
	where $c_+,c_->0$ depend only on $\vartheta$ and $\gamma$. Since $\mu^\pm_n\sim\rho_\pm k$, both bounds are of the
	form $\exp(-c'_\pm k)$ with $c'_\pm>0$, and as $k_n\ge n^\theta$ the series $\sum_n\exp(-c'_\pm k_n)$ converge,
	whence Borel-Cantelli Lemma yields $y^-_{n,k}\le Y_{n,k}<y^+_{n,k}$ for $n$ large enough almost surely. On
	this event, $s\ge0$ and the monotonicity of $y\mapsto \hat m_{s(Y)}(y)$ give
	$$
	\big|\hat m_{s(Y)}(Y_{n,k})-\hat m_{s(Y)}(y_{n,k})\big|\le\hat m_{s(Y)}(y^-_{n,k})-\hat m_{s(Y)}(y^+_{n,k}).
	$$
	Now $y^\pm_{n,k}\to+\infty$ are deterministic with $\bar F(y^\pm_{n,k})\sim\rho_\pm k/n$, so the scalar version of
	the regime~(ii) bound for $A_{2,n}$, with $s$ in place of $\mu$, applies at $y^-_{n,k}$ and $y^+_{n,k}$ and yields
	$\hat m_{s(Y)}(y^\pm_{n,k})=m_{s(Y)}(y^\pm_{n,k})(1+o(1))$ almost surely. Besides,
	$m_{s(Y)}\in\RV_{\lambda-1/\gamma}(+\infty)$ together with $y^\pm_{n,k}=(1\pm\vartheta)y_{n,k}$ gives
	$m_{s(Y)}(y^\pm_{n,k})/m_{s(Y)}(y_{n,k})\to(1\pm\vartheta)^{\lambda-1/\gamma}$. It follows that
	$$
	\limsup_{n\to+\infty}\f{\|A_{1,n}\|}{m_{s(Y)}(y_{n,k})}\le(1-\vartheta)^{\lambda-1/\gamma}-(1+\vartheta)^{\lambda-1/\gamma}\quad\text{almost surely.}
	$$
	The bound holds for every $\vartheta\in(0,1)$, and its right-hand side is positive, since $\lambda-1/\gamma<0$,
	and tends to $0$ as $\vartheta\downarrow0$; letting $\vartheta\downarrow0$ along a sequence, we conclude that
	$\|A_{1,n}\|/m_{s(Y)}(y_{n,k})\to0$ almost surely.

	It remains to bound the residual $A_{3,n}$. As in the proof of Lemma~\ref{lem:norm_noise}, let
	$\E_{k,y}(\cdot):=\E(\cdot\mid Y_{n,k}=y)$ and $m_{k,y}(W_i):=\E_{k,y}(W_i1_{\{Y_i\ge Y_{n,k}\}})$. The conditional centring $\E_{\mathcal{F},\mathcal{B}}(\xi)=0$ reads
	$\E_{\mathcal{B}}(\xi_1\mid Y_1=t)=0$ for almost every $t$, while the moment conditions of
	Lemma~\ref{lem:second_moment} hold in view of \eqref{hyp:res_cond} and Jensen's inequality, so that
	Remark~\ref{rmk:second-moment-centred}, a
	consequence of Lemma~\ref{lem:cond_sample} and Lemma~\ref{lem:second_moment}, applies to $Z_i=\xi_i$ and yields
	$m_{k,y}(\xi_1)=0$ together with
	$$
	\E_{k,y}(\|A_{3,n}\|^2)=\ff{n}\,m_{k,y}(\|\xi_1\|^2).
	$$
	Contrary to the proof of Lemma~\ref{lem:norm_noise}, where the noise $\eps$
	in~\eqref{eq:single_index_model} is not assumed conditionally centred and the tail-moment $m_{k,y}(Z_1)$ does
	not vanish, only the diagonal term remains here. By Hölder's inequality
	with exponents $(q/2,q/(q-2))$, \eqref{hyp:res_cond} and $m_{k,y}(1)=k/n$ from Lemma~\ref{lem:bousebata_random},
	there is a constant $c>0$ independent of $(n,y)$ such that, for $n$ large enough and uniformly in $y\ge0$,
	$m_{k,y}(\|\xi\|^2)\le c(k/n)^{1-2/q}$. In regime~(i), the tower property and
	$m^2_{s(Y)}(y_{n,k})\sim cs^2(y_{n,k})(k/n)^2$ then give, with $\delta^\mu_{n,k}:=(s(y_{n,k})(k/n)^{1/q})^{-1}$,
	$\|A_{3,n}\|/m_{s(Y)}(y_{n,k})=O_\p(\delta^\mu_{n,k}/\sqrt{k})$. Besides, \eqref{hyp:q_mu} ensures that
	$1/\delta^\mu_{n,k}$ is regularly varying of positive index $\lambda\gamma-1/q$ in $n/k$, hence
	$\delta^\mu_{n,k}\to0$, so this term is $o_\p(k^{-1/2})$. In regime~(ii), we argue conditionally and integrate
	over the threshold, as in the proof of Lemma~\ref{lem:norm_noise}. Conditionally on $\{Y_{n,k}=y\}$ and on the
	pair $(i_0,S)$ of Lemma~\ref{lem:cond_sample}, $nA_{3,n}$ is a sum of $k$ independent centred residuals valued
	in $H$, whose conditional moments are bounded by $c_r:=\sup_{t}\E(\|\xi\|^r\mid Y=t)<+\infty$ for $r\in\{2,q\}$,
	in view of \eqref{hyp:res_cond} and Jensen's inequality. The same argument as for $A_{2,n}$ then applies
	conditionally on $(i_0,S)$: writing $\E_{i_0,S,y}(\cdot):=\E(\cdot\mid (i_0,S),\,Y_{n,k}=y)$ for brevity, de
	Acosta's inequality \citep[Theorem~2.1, case $p>2$]{deacosta1981} with exponent $q$ gives
	$$
	\E_{i_0,S,y}\Big(\big|\,\|nA_{3,n}\|-\E_{i_0,S,y}(\|nA_{3,n}\|)\,\big|^q\Big)\le C_q\big[(k\,c_2)^{q/2}+k\,c_q\big]\le C\,k^{q/2},
	$$
	while the recentring is controlled by $\E_{i_0,S,y}(\|nA_{3,n}\|)\le(k\,c_2)^{1/2}$. Since
	$n\,m_{s(Y)}(y_{n,k})\sim s(y_{n,k})\,k/(1-\lambda\gamma)$ with $s(y_{n,k})\to+\infty$, the recentring is
	negligible with respect to the threshold $\eta'\,n\,m_{s(Y)}(y_{n,k})$ and Markov's inequality provides a
	constant $c>0$ independent of $(n,y)$ and of $(i_0,S)$ such that, uniformly in $y\in[y^-_{n,k},y^+_{n,k}]$ and
	for $n$ large enough,
	$$
	\p_{i_0,S,y}\big(\|A_{3,n}\|/m_{s(Y)}(y_{n,k})>\eta'\big)\le
	c\,\eta'^{-q}\,k_n^{-q/2}\,s^{-q}(y_{n,k})\le c\,\eta'^{-q}\,k_n^{-q/2}
	$$
	for any $\eta'>0$ and $n$ large enough. Averaging over $(i_0,S)$ yields the same bound for
	$\p_{k,y}(\|A_{3,n}\|/m_{s(Y)}(y_{n,k})>\eta')$. Integrating this conditional bound over $y$ by Bayes' formula and bounding the
	contribution of $\{Y_{n,k}\notin[y^-_{n,k},y^+_{n,k}]\}$ by the Chernoff estimate established for $A_{1,n}$, one
	obtains $\p(\|A_{3,n}\|/m_{s(Y)}(y_{n,k})>\eta')\le c\,\eta'^{-q}\,k_n^{-q/2}+\exp(-c'k)$ for $n$ large enough. Since
	$\theta>2\lambda\gamma$ and $q\lambda\gamma>1$ from~\eqref{hyp:q_mu} entail $\theta q/2>q\lambda\gamma>1$: this is summable, whence Borel--Cantelli Lemma yields
	$\|A_{3,n}\|/m_{s(Y)}(y_{n,k})\to0$ almost surely.

	We are now able to conclude. Dividing the splitting by $m_{s(Y)}(y_{n,k})$, the three terms are $O_\p(k^{-1/2})$
	in regime~(i) and converge to $0$ almost surely in regime~(ii). It remains to replace $y_{n,k}$ by $Y_{n,k}$
	inside $m_W$ and $m_{s(Y)}$. To this end, in regime~(i), \eqref{hyp:vonmises} ensures $\sqrt{k}\,(Y_{n,k}/y_{n,k}-1)=O_\p(1)$, see
	\citet[Theorem~2.2.1]{Haan2007}; 
	since $m_{s(Y)}$ is continuously differentiable with
	$-y\,m_{s(Y)}'(y)/m_{s(Y)}(y)=y\,s(y)f(y)/m_{s(Y)}(y)\to(1-\lambda\gamma)/\gamma$, the mean value theorem
	combined with the uniform convergence theorem for regularly varying functions yields
	$$
	m_{s(Y)}(Y_{n,k})/m_{s(Y)}(y_{n,k})=1+O_\p(k^{-1/2}).
	$$
	In regime~(ii), $Y_{n,k}/y_{n,k}\to1$ almost surely, as
	established for $A_{1,n}$, and $m_{s(Y)}\in\RV_{\lambda-1/\gamma}(+\infty)$ gives
	$m_{s(Y)}(Y_{n,k})/m_{s(Y)}(y_{n,k})\to1$ almost surely. In both regimes, the Bochner triangle inequality
	$\|m_W(Y_{n,k})-m_W(y_{n,k})\|\le|m_{s(Y)}(Y_{n,k})-m_{s(Y)}(y_{n,k})|$, as for $A_{1,n}$, then lets one replace
	the deterministic threshold by the empirical one without affecting the rate. This proves the first claim in each regime.

	Finally, assume~\eqref{hyp:nondeg}, that is $\|m_W(y_{n,k})\|\ge c_0\,m_{s(Y)}(y_{n,k})$ for some $c_0>0$ and
	$n$ large enough, so that $m_W(y_{n,k})\neq0$ and $\bar a(y_{n,k})$ is well defined. Denote
	$v_n:=\hat m_W(Y_{n,k})/m_{s(Y)}(y_{n,k})$ and $\bar v_n:=m_W(y_{n,k})/m_{s(Y)}(y_{n,k})$, so that
	$\|\bar v_n\|\in[c_0,1]$, $\bar v_n/\|\bar v_n\|=\bar a(y_{n,k})$ and $v_n-\bar v_n\to0$ in the mode at hand by
	the first claim. The reverse triangle inequality provides $\big|\|v_n\|-\|\bar v_n\|\big|\to0$, that is
	$\|\hat m_W(Y_{n,k})\|/\|m_W(y_{n,k})\|\to1$. Combined with $\|m_W(Y_{n,k})\|/\|m_W(y_{n,k})\|\to1$, which
	follows from Bochner triangle inequality and the threshold swap above, this implies the norm statement.
	Finally, $\|u/\|u\|-w/\|w\|\|\le2\|u-w\|/\|u\|$ for nonzero $u,w$, applied on the event
	$\{\|v_n\|\ge c_0/2\}$, of probability tending to one in regime~(i) and holding for $n$ large enough almost
	surely in regime~(ii), with $u=v_n$ and $w=\bar v_n$, gives $\hat m_W(Y_{n,k})/\|\hat m_W(Y_{n,k})\|-\bar a(y_{n,k})\to0$
	in the mode at hand. Besides, the threshold swap above gives
	$\|m_W(Y_{n,k})-m_W(y_{n,k})\|\le|m_{s(Y)}(Y_{n,k})-m_{s(Y)}(y_{n,k})|=o\big(m_{s(Y)}(y_{n,k})\big)$ in the mode
	at hand, while $\|m_W(y_{n,k})\|\ge c_0\,m_{s(Y)}(y_{n,k})$: whence $m_W(Y_{n,k})\neq0$ for $n$ large enough, in
	the mode at hand, and the same triangle bound, now applied with $u=m_W(Y_{n,k})$ and $w=m_W(y_{n,k})$, gives
	$\bar a(Y_{n,k})-\bar a(y_{n,k})\to0$. The direction statement at the empirical threshold follows, which ends
	the proof. 
\end{proof}

		\bibliographystyle{chicago} 
		\bibliography{biblio_revised}
	\end{document}